\pgfplotsset{compat=1.11}
\newcolumntype{e}{>{\displaystyle}r @{\,} >{\displaystyle}c @{\,} >{\displaystyle}l}
\theoremstyle{plain}
\newtheorem{Theorem}{Theorem}[section]
\newtheorem{Corollary}[Theorem]{Corollary}
\newtheorem{Lemma}[Theorem]{Lemma}
\newtheorem{Proposition}[Theorem]{Proposition}
\theoremstyle{definition}
\newtheorem{Example}[Theorem]{Example}
\newtheorem{Definition}[Theorem]{Definition}
\newtheorem{Remark}[Theorem]{Remark}
\newcommand{\T}{\mathbb{T}}
\newcommand{\N}{\mathbb{N}}
\newcommand{\M}{\mathcal{M}}
\newcommand{\m}{\mathbf{m}}
\newcommand{\bz}{\boldsymbol{\zeta}}
\newcommand{\B}{\mathbf{B}}
\newcommand{\Pbrw}{\mathbb{P}}
\newcommand{\Prw}{\mathbf{P}}
\newcommand{\Ebrw}{\mathbb{E}}
\newcommand{\Erw}{\mathbf{E}}
\newcommand{\cM}{\mathcal{M}}
\numberwithin{equation}{section}
  \newcounter{constant}
\newcommand*\mycirc[1]{%
\begin{tikzpicture}[baseline=(C.base)]
\node[draw,circle,inner sep=1pt](C) {#1};
\end{tikzpicture}}
\begin{document}

\title{Martin boundaries and asymptotic behavior \\ of branching random walks}

\author{Daniela Bertacchi\footnote{daniela.bertacchi@unimib.it; Universit\`a Milano Bicocca, Milan}
\and
Elisabetta Candellero\footnote{elisabetta.candellero@uniroma3.it;  University of Roma Tre, Rome} 
\and 
Fabio Zucca\footnote{fabio.zucca@polimi.it;  Politecnico di Milano, Milan}
}

\maketitle

\begin{abstract}
Let $G$ be an infinite, locally finite 
graph.
We investigate the relation between supercritical, transient branching random walk
and the Martin boundary of its underlying random walk.
We show results regarding the typical asymptotic directions taken by the 
particles, and as a consequence we find a new connection between $t$-Martin boundaries and standard Martin boundaries.
Moreover, given a subgraph $U$ we study two aspects of branching random walks on $U$: when the trajectories visit $U$ infinitely often (survival) and when they stay inside $U$ forever (persistence).
We show that there are cases, when $U$ is not connected, where the branching random walk alamost surely does not survive in $U$, but the random walk on $G$ converges to the boundary of $U$ with positive probability.
In contrast, the branching random walk can survive with positive probability 
in $U$ even though the random walk eventually exits $U$ almost surely.
We provide several examples and counterexamples. 
%
%
%
%
%
%
\end{abstract}

\noindent \textbf{Keywords}:
Branching random walk, Martin boundary, random walk, persistence, survival, tree.

\noindent \textbf{AMS subject classification}: 60J80, 60J10, 60J45.


\section{Introduction}

The remarkable connection between the asymptotic behavior of a random walk (RW) on a graph and its Martin boundary is now a classical fact in the probabilistic literature (see e.g.~\cite{NeySpitzer},  \cite{sawyer_harmonic_functions}, \cite[Chapter IV]{woess2000}, \cite{Woess-denumerable} and references therein).
Since a branching random walk (BRW) is defined using a RW transition kernel, it is natural to ask what connections there are between the asymptotic behavior of a BRW and the Martin boundary of the associated RW.
Great interest has been given to the study of Martin boundaries in several settings, for example, in the case of branching processes (without transition kernel) we recall the results in \cite{Lootgieter} and \cite{Overbeck}, that study the convergence of \emph{space-time} Martin kernels.
For more recent results we refer the reader to \cite{Abraham-Delmas} and references therein.

Lately there has been increasing interest in the study of the long-time behavior of BRWs on graphs.
Particular attention has been given to the investigation of the limiting set of the \emph{trace}, i.e.\ the random subgraph visited by particles of the BRW, see for example \cite{candellero_roberts_BRW}, \cite{GilchMueller-EndsBRW}, and \cite{Hutchcroft-NonIntersectionBRW}.
In particular, \cite{Hutchcroft-NonIntersectionBRW} shows that the number of ends of the trace of a transient BRW on a Cayley graph of a non-amenable group is infinite.

The first works relating BRWs and Martin boundaries are \cite{CandelleroHutchcroft} and \cite{KW2022}, which point out a connection between the Martin boundary of a RW, which we denote by $\M(P,1)$, and BRWs with the same transition kernel.

In this note we proceed the investigation initiated in \cite{CandelleroHutchcroft} 
by studying the asymptotic behavior of a supercritical and transient BRW.
More precisely, on an infinite, locally finite 
graph consider a BRW with transition kernel $P$ and offspring distribution $\nu$: each individual lives for one unit of time and then produces a random number of offspring according to $\nu$; the offspring are independently dispersed according to $P$.
We find 
conditions that guarantee that there are trajectories of BRW that converge to a subset of the Martin boundary for RW governed by $P$
(see Theorem \ref{prop:equivalent}).
We consider the behavior of the BRW on subgraphs $U$.
If $U$  is ``regular enough'' (in a sense that will be made precise later), we identify a \emph{phase transition} in the mean $\m:=\sum_{k\geq 0} k\nu(k)$, in the following sense.
We find a threshold $\m_1(U)>0$ such that, whenever $\m< \m_1(U)$, then almost surely all trajectories exit $U$; whereas when $\m >\m_1(U) $, then with positive probability there are trajectories that never exit $U$ (see Corollary \ref{cor:m1+}).

In order to state our results we need to introduce some notation.

\subsection{Notation.}
The probability measure and the expectation of a BRW depend on the transition kernel, the offspring distribution and the initial state of the process. To avoid confusion, sometimes we write these dependencies explicitly.
More precisely, given a transition kernel ${P}$ defined on $G$ we denote the probability measure and expectation of the corresponding BRW as $\Pbrw^{{P}}$ and $\Ebrw^{{P}}$ respectively, unless specified otherwise.
From now on, with a slight abuse of notation, for all graphs $X$, by $x\in X$ we mean that $x$ is a vertex of $X$.
If the initial state consists of one particle at $x \in G$, then we write $\Pbrw_x$ and $\Ebrw_x$.
Furthermore, for every $n\geq 1$,  $\B_n\in {\N}^{G}$ corresponds to the infinite-dimensional vector (with only finitely many non-zero entries), so that each entry in position $x\in G$ gives the number of particles of BRW that are alive at $x$ at time $n$.
%
In this paper we frequently deal with RWs as well. Although it is possible to assume that all the processes are defined on a unique probability space, it is preferable to use separate notations for the probability measures and expectations  related to the BRW (namely $\Pbrw$ and $\Ebrw$) and the probability measures and expectations related to the RW, $\Prw$ and $\Erw$.

\subsection{Transition matrices.}
Throughout this work we will use several transition kernels, which we define here.
The ``main'' one is defined on $G$ and denoted by $P:=\big (p(x,y)\big )_{x,y}$.
We assume that $P$ is \emph{irreducible} and \emph{nearest-neighbor}, even though our results can be easily modified to cover the cases of finite-range transition kernels. More precisely, we have that $p(x,y)>0$ if and only if $x$ and $y$ are neighbors.
Let $P^n=\big (p^{(n)}(x,y)\big )_{x,y}$ denote the $n$-th  convolution power of $P$, then the spectral radius of $P$ is
\begin{equation}\label{eq:rhoG}
\rho_G:=\limsup_n \left ( p^{(n)}(x,x)\right )^{1/n} .
\end{equation}
It is well known that if $\m \le 1$ then the BRW dies out almost surely, while if $\m > 1/\rho_G$ then the BRW visits infinitely many times every finite set with positive probability. The most interesting case for our purposes is when $\rho_G<1$: indeed, if $1 < \m \le 1/\rho_G$ then the BRW almost surely vacates any finite subset,
but with positive probability there are particles alive at any time; in this case we say that the BRW is \emph{transient}.
From now on, if not otherwise stated, we  assume that $\rho_G<1$ and that $1<\m \leq  1/\rho_G$.

For every connected subgraph $U\subseteq G$ define the substochastic matrix $P_U:=\big (p_U(x,y)\big )_{x,y}$ so that $p_U(x,y):=p(x,y)$ whenever $x,y \in U$ and $p_U(x,y):=0$ otherwise.
The spectral radius of $P_U$ is
\begin{equation}\label{eq:defRhoU}
	\rho_U:=\limsup_n \left ( p_U^{(n)}(x,x)\right )^{1/n}.
\end{equation}
From now on, we tacitly assume the following property for subgraphs $U$:
\begin{equation}
	\forall \ x,y\in U 
	\text{ we have }\{x,y\}\in E(U) \Longleftrightarrow \{x,y\}\in E(G),
\end{equation}
i.e., whenever two vertices in $U$ are neighbors in $G$, we also assume that they are neighbors in $U$.
Moreover, we assume that $U$ is not a singleton $\{x_0\}$, with $p(x_0,x_0)=0$.
Note that $\delta_x:=\sum_{w\in U}p_U(x,w)>0 $ for all $x\in U$; we define a new matrix $Q_U:=\big (q_U(x,y)\big )_{x,y\in U}$ as follows. 
For each fixed $x\in U$, $q_U(x,y):= p_U(x,y)/\delta_x$.
By definition we have $\sum_{w\in U}q_U(x,w)=1$ for all $x\in U$.
Each entry $q_U(x,y)$ represents the probability that a RW with transition kernel $P$ moves from $x$ to $y$, \emph{conditional} on the event that it cannot exit $U$.
In fact, whenever the RW is transitioning between internal vertices of $U$ it moves according to $P_U$ (i.e.\ $P$), whereas when it is moving from a vertex on the boundary of $U$ (thus possibly exiting towards other regions of $G$), then it is forced to stay inside.
The spectral radius of the walk governed by $Q_U$ is
\begin{equation}\label{eq:phi_U}
\phi_U:=\limsup_n \left ( q_U^{(n)}(x,x)\right )^{1/n}  .
\end{equation}
By irreducibility, none of $\rho_G, \rho_U,\phi_U$ depend on the starting vertex $x$. 
Moreover, by the supermultiplicative property 
and Fekete's Lemma, they are all strictly positive.

\subsection{Persistence and survival in subgraphs.}

In the following we use a concept which will be defined in Section \ref{sect:constr-propBZ}, but can be informally understood as follows.
Consider the original BRW on $G$ started at a fixed vertex $x\in U$, but at each step \emph{kill} all particles exiting $U$.
The resulting process is still a BRW; we  refer to it as the ``\emph{BRW induced by $U$}'' and denote it by $\{\mathbb{U}_n\}_n$.
Its offspring distribution
depends on the structure of $U$ and in general it is different at different vertices of $U$.
We say that a BRW defined on $G$ \emph{persists globally} in $U$ if $\{\mathbb{U}_n\}_n$ has alive particles at all times, with positive
probability.
We speak of \emph{local persistence} at vertex $x\in U$ if  $x$ is visited infinitely often by $\{\mathbb{U}_n\}_n$ with positive probability (see Definition~\ref{def:global-local}).

We say that the BRW starting from $x \in G$  \emph{survives} in a subset $U \subseteq G$, if $\{\B_n\}_n$ has positive probability of having an infinite number of particles visiting $U$  (see Definition~\ref{def:survival-U}). 
Clearly, persistence implies survival.
For all $U\subseteq G$ and all $x\in U$ we denote by $\mathbf{q}(x,U)$ the probability that $\{\B_n\}_n$ started at $x$ visits $U$ a finite number of times and we call it extinction probability.  We say that there is \emph{global survival} starting from $x$ if $\mathbf{q}(x,G)<1$;  we say that there is \emph{local survival} starting from $x$ if $\mathbf{q}(x,\{y\})<1$ for all $y \in G$.

It will be important to know when the BRW induced by a fixed subset $U$ is 
an 
$\mathcal{F}$-BRW (see \cite{Zucca}, \cite{BZ_genfunapproach} and \cite{bertacchi-zucca-infinite-type}).
Roughly speaking, in this case (see Definition \ref{def:F-BRW}), the global behavior of the BRW induced by $U$ is the same as the global behavior of a multi-type Galton-Watson process with only \emph{finitely} many types (see \cite[Definition 2.2]{Zucca} and the subsequent discussion). Sometimes, for sake of simplicity, we say that an $\mathcal{F}$-BRW has a finite number of types. Note that $\{\B_n\}_n$ has only one type, whereas $\{\mathbb{U}_n\}_n$ has only
one type if and only if the (induced) offspring distribution of $\{\mathbb{U}_n\}_n$
does not depend on the vertex
(in this case the amount of ``lost'' children which would be placed 
outside $U$ by $\{\B_n\}_n$ at any vertex has the same law everywhere).
Note that transitive and quasi-transitive BRWs are particular cases of $\mathcal{F}$-BRWs.
Let us point out that even if we choose $U$ to be a subset of an infinite, locally finite 
and transitive 
graph, and the BRW induced by $U$ 
is an $\mathcal F$-BRW, that does not imply that
$U$ (as a graph), or the induced BRW,
are quasi-transitive.
However, if $P$ is the simple RW on $G$,
then any BRW induced by constant-degree subgraphs
is an $\mathcal F$-BRW.

Besides survival and persistence in a subgraph $U$, we are interested in the limiting behavior of the BRW. This leads us to wonder on 
the one hand
whether there is convergence to the Martin boundary of $U$, and on the other hand whether, given a subset $A$ of the Martin boundary
of $G$, there is convergence of trajectories to $A$. 

We need some notation.
We denote by $\M(P,1)$ the Martin boundary for RW on $G$ governed by the transition kernel $P$.
For any Borel set $A\subseteq \M(P,1)$,
whenever we refer to a \emph{neighborhood} $U$ of $A$, we mean a neighborhood with respect to the usual topology on $\M(P,1)$, or any equivalent formulation, see e.g.\ \cite{picardello_woess_trees} or \cite{picardello_woess}.

Let $d_G(\cdot, \cdot)$ denote the distance in $G$ induced by the graph metric
and fix a reference vertex $o$.
For any set $U\subseteq G$, let
\[
\widehat{U}:=U\cup \{\xi \in \M(P,1) \ : \ \exists \text{ sequence }\{y_n\}_n\in U \text{ s.t.\ }d_G(o,y_n)\to \infty, \text{ and } y_n\to \xi\},
\]
where $y_n\to \xi$ denotes convergence in the topology of the Martin boundary (see \cite[Chapter 7]{Woess-denumerable} for details).

\subsection{Main results.}

Our first result, Theorem~\ref{prop:equivalent}, describes a relation between global persistence of supercritical BRW in a subset $U$ and the spectral radii $\rho_U$ and $\phi_U$.
Theorem~\ref{prop:equivalent} also investigates the relationship between the 
underlying RW, the Martin boundary  $\M(P,1)$, persistence and survival.
More precisely, we first take a Borel set
$A \subseteq \M(P,1)$, such that the RW has a positive probability of convergence to $A$, and show that the BRW persists in any connected neighborhood of $A$. 
Note that, to ensure persistence with positive probability, we need a neighborhood of $A$, not just any $U$ such that $\partial U=A$.
Nevertheless, it is true that on trees, if $U$ is connected and the RW has a positive probability of convergence to 
its topological boundary $\partial U \subseteq \M(P,1)$, then there is positive probability of survival in $U$.

The probability of convergence to a Borel set $A \subseteq \M(P,1)$  for a RW starting from $o$ is denoted by $\gamma_o(A)$, where $\gamma_o$ is called the \emph{harmonic measure} associated to the RW started at $o$.


\begin{Theorem}\label{prop:equivalent}

\begin{itemize} \leavevmode
\item[(i)] 
For a fixed connected subgraph $U$,
let $x\in U$ be a fixed vertex.
If for all $\m\in (1, \rho_G^{-1}]$ we have positive probability of global persistence in $U$, then $\rho_U=\phi_U$. 
Furthermore, if the BRW induced by $U$ is 
an $\mathcal{F}$-BRW, then also vice versa holds.
\item[(ii)] Let $A\subseteq \M(P,1)$ be a Borel set and $U$ a fixed connected neighborhood of $A$. If  $\gamma_o(A)>0 $, then for all $\m>1$, 
there is persistence in $U$ with positive probability starting from every $x \in U$.
\item[(iii)] Suppose that $G$ is a tree and consider a connected subset $U \subseteq G$ such that $\gamma_o(\partial U)>0$.
Then for all $\m>1$, there is survival in $U$ with positive probability  starting from every $x \in G$.
\end{itemize}
\end{Theorem}

One may ask whether the statements in Theorem~\ref{prop:equivalent} can be strengthened. Namely, 
in $(i)$ we wonder whether $\rho_U=\phi_U=\rho_G$; this is discussed in Remark~\ref{rem:rho=phi} and the answer is negative: indeed, there are examples where $\rho(U)=\phi(U)<\rho(G)$.
From $(ii)$ and $(iii)$ it is natural to conjecture that the extinction probabilities $\{\mathbf{q}(x,U)\}_x$ of BRW in $U$ are related to the properties of $\partial U$.
This is an instance of the more general question of whether $\{\mathbf{q}(x,U)\}_x=\{\mathbf{q}(x,U')\}_x$ is related to $\partial U=\partial U'$ or $\gamma_o(\partial U \triangle \partial U')=0$.
We show that generic 
sets with equal boundary may have different extinction probabilities and there might be survival w.p.p.~in one set and a.s.~extinction in the other
(Proposition \ref{prop:densenopersistence}).
However, in the particular case where $U$ is connected
this cannot happen
 (Propositions \ref{pro:connectedequalboundary} and \ref{pro:survivalpositivemeasure}).
In a nutshell,  we find examples of subgraphs $U$ such that $\gamma_o(\partial U)>0$ and the BRW does not survive in $U$; moreover, there are other cases where $\gamma_o(\partial U)=0$ but the BRW persists globally on $U$ (see Section \ref{subsec:nullandpersistence}).

The following theorem finds precise bounds on $\m$ for global persistence of BRWs in a connected subgraph $U$.

\begin{Theorem}\label{thm:local-surv}
Let $U$ denote a connected subgraph.
Whenever $\m$ is so that
$
\frac{\phi_U}{\rho_U} < \m \leq \frac{1}{\rho_U},
$
then 
the BRW induced by $U$ can persist globally in $U$, but it cannot persist locally.
\end{Theorem}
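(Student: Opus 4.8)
The plan is to read off the first‑moment structure of the BRW induced by $U$ and then combine a first‑moment / Borel--Cantelli bound for the impossibility of local persistence with a branching‑process argument for the possibility of global persistence. First I would note that a particle of the induced BRW at $x\in U$ produces, in expectation, $\m\,p_U(x,y)$ offspring at each $y\in U$ (its $L\sim\nu$ children move according to $p(x,\cdot)$ and those leaving $U$ are removed), so the first‑moment matrix of the induced BRW restricted to $U$ is $\m P_U$. Hence, starting from one particle at $x$ and writing $\B_n(y)$ for the number of particles at $y$ at time $n$ and $|\B_n|=\sum_{y\in U}\B_n(y)$,
\[
\Ebrw_x[\B_n(y)]=\m^n p_U^{(n)}(x,y),\qquad \Ebrw_x\big[\,|\B_n|\,\big]=\m^n\!\sum_{y\in U}p_U^{(n)}(x,y).
\]

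For the claim that the induced BRW cannot persist locally when $\m\le 1/\rho_U$, I would fix $x\in U$ and bound the expected total number of particle‑visits to $x$, which equals $\sum_{n\ge 0}\m^n p_U^{(n)}(x,x)$, i.e.\ the Green function of $P_U$ evaluated at $\m$. When $\m<1/\rho_U$ the root test applied to \eqref{eq:defRhoU} gives $\limsup_n(\m^n p_U^{(n)}(x,x))^{1/n}=\m\rho_U<1$, so this series converges, $x$ is a.s.\ visited only finitely often, and summing the null events over the countable set $U$ excludes local persistence at every vertex. The endpoint $\m=1/\rho_U$ only occurs when $\phi_U<1$ (otherwise the interval is empty), and there one must check that $P_U$ is $\rho_U$‑transient, $\sum_n\rho_U^{-n}p_U^{(n)}(x,x)<\infty$. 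If $\delta_x$ is constant on $U$ this is immediate, since then $p_U^{(n)}=\delta_x^n q_U^{(n)}$, so $\rho_U$‑transience of $P_U$ is exactly $\phi_U$‑transience of $Q_U$, which holds because $\phi_U<1$; in general I would reduce to this via the identity $p_U^{(n)}(x,y)=\Erw^{Q_U}_x\big[\prod_{i<n}\delta_{Z_i}\,\mathbf 1_{\{Z_n=y\}}\big]$, the weights $\delta_{Z_i}\le 1$ only accelerating the decay.

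For the claim that the induced BRW can persist globally when $\m>\phi_U/\rho_U$, observe that global persistence of the BRW in $U$ is non‑extinction of the induced BRW, and that $\{|\B_n|\ge 1\}$ is decreasing in $n$, so it suffices to bound $\Pbrw_x(|\B_n|\ge 1)$ away from $0$ along a subsequence. The induced BRW can be described as a BRW on $U$ with the \emph{stochastic} movement $Q_U$ and offspring obtained from $\nu$ by $\delta_x$‑thinning. If $\delta_x\equiv\delta$ is constant, its total population is a Galton--Watson process with mean $\m\delta=\m\,\rho_U/\phi_U$ (using $\rho_U=\delta\phi_U$), supercritical precisely when $\m>\phi_U/\rho_U$, hence surviving with positive probability. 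In general I would run a Paley--Zygmund estimate $\Pbrw_x(|\B_n|\ge 1)\ge\Ebrw_x[|\B_n|]^2/\Ebrw_x[|\B_n|^2]$: by \eqref{eq:condition-nbhd} the exponential rate $\beta:=\limsup_n(\sum_{y\in U}p_U^{(n)}(x,y))^{1/n}$ is independent of $x$ (this follows from irreducibility) and equals $\rho_U/\phi_U$, so $\m>\phi_U/\rho_U$ forces $\Ebrw_x[|\B_{n_k}|]\to\infty$ along a subsequence; since $\nu$ has finite variance and $\m\rho_U/\phi_U>1$, the usual last‑common‑ancestor expansion of the second moment gives $\Ebrw_x[|\B_{n_k}|^2]\le C\,\Ebrw_x[|\B_{n_k}|]^2$, and Paley--Zygmund then yields $\Pbrw_x(|\B_{n_k}|\ge 1)\ge 1/C$, so global persistence has positive probability; irreducibility propagates this to every starting vertex.

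The main obstacle is the identity $\limsup_n(\sum_{y\in U}p_U^{(n)}(x,y))^{1/n}=\rho_U/\phi_U$ — equivalently, the factorization of $\rho_U$ as the spectral radius $\phi_U$ of the conditioned walk times the exponential rate at which a $P_U$‑walk has not yet left $U$ — together with the closely related $\rho_U$‑transience of $P_U$ under $\phi_U<1$; both require controlling how the inhomogeneous weights $\prod_i\delta_{Z_i}$ interact with $Q_U$ at the exponential scale, and both are elementary only when $\delta_x$ is constant on $U$ (for instance $P$ simple random walk and $U$ of constant degree, as in the remark before Proposition~\ref{prop:equivalent}). Once these are available, the second‑moment estimate and the endpoint $\m=1/\rho_U$ are routine.
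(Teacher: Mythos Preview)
You have over-read the ``can persist globally'' clause. The paper explicitly warns, immediately after the theorem, that the statement \emph{does not} assert positive probability of global persistence; that conclusion requires the extra $\mathcal{F}$-BRW hypothesis (Corollary~\ref{cor:m1+}). Accordingly the paper's proof of the global part is minimal: it just invokes Lemma~\ref{lemma:step3}, which via Lemmas~\ref{lemma:step1}--\ref{lemma:step2} shows that the first-moment growth rate of the induced BRW is $\m\,\rho_U/\phi_U$, so the first-moment criterion of \cite[Theorem~4.1]{Zucca} does not force global extinction once $\m>\phi_U/\rho_U$. Your Paley--Zygmund scheme is therefore aimed at a stronger claim than the one being proved, and in this generality it would in fact be difficult to complete --- the whole point of the $\mathcal{F}$-BRW distinction in the paper is that, without it, $\m>\phi_U/\rho_U$ need not imply positive probability of global persistence. (Incidentally, the identity you flag as an obstacle, $\limsup_n\big(\sum_{y}p_U^{(n)}(x,y)\big)^{1/n}=\rho_U/\phi_U$, is exactly Lemma~\ref{lemma:step1}.)

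For the local part, your Green-function/Borel--Cantelli argument is fine when $\m<1/\rho_U$ but has a genuine gap at the endpoint $\m=1/\rho_U$: you need $\sum_n \rho_U^{-n}p_U^{(n)}(x,x)<\infty$, i.e.\ $\rho_U$-transience of $P_U$, and your reduction via the inhomogeneous weights $\prod_i\delta_{Z_i}$ does not establish this in general (the fact that $\delta_{Z_i}\le1$ bounds $p_U^{(n)}$ above by $q_U^{(n)}$ does not by itself control the series at radius $\rho_U^{-1}$, since $\rho_U\le\phi_U$). The paper avoids this issue entirely by citing the \emph{if and only if} criterion of \cite[Theorem~4.1]{Zucca}: local persistence holds with positive probability iff $\limsup_n\big(\m^n p_U^{(n)}(x,x)\big)^{1/n}>1$, i.e.\ iff $\m\rho_U>1$. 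For $\m\le1/\rho_U$ this limsup is $\le1$, and the endpoint is covered with no transience discussion needed.
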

Note that the statement does \emph{not} imply that under the given conditions there is positive probability of global persistence in $U$.
In order to ensure persistence 
we need some extra assumptions (see below) however, if persistence does occur in this regime, then by Theorem \ref{thm:local-surv} it is only global but not local.

For a fixed subset $U$ and a fixed vertex $x\in U$ define
\begin{equation}\label{eq:defm1}
\m_1(U)=\m_1:=\inf \left  \{\m>1 \ : 
\Pbrw_x \bigl (
\text{ global persistence in }U
\bigr )>0 
\right \}.
\end{equation}
Note that, while in this paper we consider only BRWs defined through a breeding law
with the same distribution at all sites, plus a transition matrix $P$,
in general a BRW is defined by choosing, for each site, a probability distribution on the number of offspring and their position together (and the positions may not be independent of the number of children).
For these general BRWs, the global behavior does not depend only on the first moment (see \cite{Zucca} for details); indeed, it is possible to find two such BRWs with the same first moment and such that one survives globally while the other dies out. Thus, for a general BRW, $\m_1$ might not be a threshold between a null probability of persistence and a positive one.
It is worth noting that even if the BRW $\{{\mathbf B}_n\}_n$ on $G$ is constructed
from a breeding law
with the same distribution $\nu$ at all sites, plus a transition matrix $P$, it is not in general true that $\{{\mathbb U}_n\}_n$ is given by a constant breeding law
$\nu_U$: it may depend on the vertex (while the transition matrix is clearly $P_U$).
Thus it is not trivial to establish when $\m_1$ is a threshold: this is true
if $\nu_U$ does not depend on the vertex (for instance when $P$ is the simple RW and
$U$ is vertex transitive) as a consequence of a result of
\cite{Hutchcroft2020Branching}.
We are able to generalize this statement, and prove that $\m_1(U)$ is still a threshold, (whose precise value can be computed, see Proposition \ref{prop:monotonicity}),  even when the offspring distribution depends on the vertex, provided that $\{{\mathbb U}_n\}_n$ is sufficiently regular, namely in the
case that it is an $\mathcal{F}$-BRW, thanks to Corollary~\ref{thm:local-surv}.
Thus in our case \eqref{eq:defm1} is a well-posed threshold, see Section \ref{sect:proof-thm}.


%
\begin{Corollary}\label{cor:m1+}
Whenever a connected subgraph $U$
is such that the BRW induced by $U$ is an $\mathcal{F}$-BRW, then $\m_1=\frac{\phi_U}{\rho_U}$.
Otherwise, $\frac{\phi_U}{\rho_U}\leq \m_1 \leq \frac{1}{\rho_U}$.
\end{Corollary}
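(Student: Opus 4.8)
The plan is to derive this as an essentially immediate consequence of Theorem \ref{thm:local-surv} together with the threshold statements already announced (Propositions \ref{prop:monotonicity} and \ref{prop:fbrwgen}, and the result of \cite{Hutchcroft2020Branching} that gives monotonicity in $\m$ for constant-offspring BRW). First I would record the two easy inequalities that hold with no extra hypothesis on $U$. For the lower bound $\m_1 \geq \frac{\phi_U}{\rho_U}$: by Theorem \ref{thm:local-surv}, whenever $\frac{\phi_U}{\rho_U} < \m \leq \frac{1}{\rho_U}$ the induced BRW can persist globally but \emph{not} locally; but to have $\Pbrw_x(\B_n \text{ globally persists in } U)>0$ one needs at least the possibility of global persistence, and moreover the regime strictly below $\frac{\phi_U}{\rho_U}$ must be ruled out — here I would invoke that if $\m < \frac{\phi_U}{\rho_U}$ then the spectral/first-moment obstruction forces almost-sure extinction of the induced process, so $\m$ cannot belong to the set in \eqref{eq:defm1}. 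For the upper bound $\m_1 \leq \frac{1}{\rho_U}$: when $\m$ exceeds $\frac{1}{\rho_U}$ (equivalently $\m \rho_U > 1$) the induced BRW is supercritical enough that its expected local occupation grows, and one gets global (indeed local) persistence with positive probability; combined with the standing assumption $\m \leq 1/\rho_G \leq 1/\rho_U$ this pins the relevant window. This already gives the "otherwise" clause.

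Next I would treat the $\mathcal F$-BRW case, where the claim is the exact identification $\m_1 = \frac{\phi_U}{\rho_U}$. The point is that for an $\mathcal F$-BRW the induced process is a multitype Galton–Watson process with finitely many types, so its survival is governed \emph{only} by the Perron–Frobenius eigenvalue of the mean matrix, which scales linearly in $\m$; hence there is a genuine sharp threshold, and by Proposition \ref{prop:monotonicity} (or Proposition \ref{prop:fbrwgen} in the vertex-dependent case) the infimum in \eqref{eq:defm1} is attained as a true phase transition. It then remains only to check that this threshold equals $\frac{\phi_U}{\rho_U}$ rather than lying strictly inside the interval $[\frac{\phi_U}{\rho_U}, \frac 1{\rho_U}]$. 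This follows by comparing with Theorem \ref{thm:local-surv}: for every $\m > \frac{\phi_U}{\rho_U}$ the induced BRW \emph{can} persist globally, and in the $\mathcal F$-BRW case "can persist" upgrades to "persists with positive probability" because the governing eigenvalue exceeds $1$; conversely for $\m \le \frac{\phi_U}{\rho_U}$ the comparison with the $Q_U$-walk (whose spectral radius is $\phi_U$, and whose associated critical mean is $1/\phi_U$ relative to the normalization that produced $\rho_U$) yields extinction. So the infimum is exactly $\frac{\phi_U}{\rho_U}$.

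I expect the main obstacle to be the careful bookkeeping connecting the three spectral radii $\rho_U$, $\phi_U$, $\rho_G$ with the first-moment threshold of the \emph{induced} BRW — in particular verifying that the critical mean of the induced process is $\frac{\phi_U}{\rho_U}$ and not some other combination, since the induced offspring distribution is built from excursions of the underlying RW inside $U$ and the normalization constants $\delta_x$ from case (ii) of the definition of $Q_U$ enter the Green-function computation. Once the relation "(mean matrix Perron eigenvalue of induced BRW) $>1$ $\iff$ $\m > \phi_U/\rho_U$" is established for $\mathcal F$-BRWs (which is really the content of Theorem \ref{thm:local-surv} restricted to that setting) and monotonicity in $\m$ is in hand from \cite{Hutchcroft2020Branching}, the corollary is formal. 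The general lower/upper bounds $\frac{\phi_U}{\rho_U} \le \m_1 \le \frac{1}{\rho_U}$ require no sharpness and drop out directly from Theorem \ref{thm:local-surv}.
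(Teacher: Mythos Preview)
Your approach is correct and aligns with the paper's, but you are taking a longer route than necessary. The paper's proof is two lines: the $\mathcal F$-BRW identity $\m_1=\phi_U/\rho_U$ and the general lower bound $\m_1\ge\phi_U/\rho_U$ are read off directly from Lemma~\ref{lemma:step3} (which computes $\lim_N(\Ebrw[\mathbb U_N^x])^{1/N}=\m\cdot\rho_U/\phi_U$ and then applies \cite[Theorems 4.1 and 4.3]{Zucca}); the general upper bound $\m_1\le 1/\rho_U$ comes from the proof of Theorem~\ref{thm:local-surv}, which shows that $\m>1/\rho_U$ gives local persistence with positive probability, and local persistence implies global persistence.

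Two comments on your write-up. First, your invocation of Propositions~\ref{prop:monotonicity}, \ref{prop:fbrwgen} and \cite{Hutchcroft2020Branching} is unnecessary: those results establish that $\m_1$ is a genuine threshold (monotonicity in $\m$), but the corollary only asks for the \emph{value} of the infimum, and Lemma~\ref{lemma:step3} pins it down directly without any monotonicity statement. Second, your step ``in the $\mathcal F$-BRW case `can persist' upgrades to `persists with positive probability' because the governing eigenvalue exceeds $1$'' is exactly the content of Lemma~\ref{lemma:step3} (via \cite[Theorem~4.3]{Zucca}); Theorem~\ref{thm:local-surv} alone does not give this, as the paper explicitly warns. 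So rather than routing through Theorem~\ref{thm:local-surv} and then upgrading, just cite Lemma~\ref{lemma:step3} for both the $\mathcal F$-BRW equality and the general lower bound, and reserve Theorem~\ref{thm:local-surv} for the upper bound only.
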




The next result relates the concepts of Martin boundary $\M(Q_U, 1)$ and $t$-Martin boundary
$\M(P_U, t)$ (see Section \ref{sect:construction} for the formal definition of these boundaries).
In \cite{CandelleroHutchcroft} (and, independently, in \cite{KW2022}) the object of study was a ``renormalized version'' of the process $\{\B_n\}_n$ on $G$. 
Similarly, on a fixed subgraph $U$, we rescale the induced BRW by a suitable factor.
For ease of notation, given a subset $U$, define
\begin{equation}\label{eq:bz}
\bz=\bz(U):=\frac{\rho_U}{\phi_U} \le 1,
\end{equation}
and for all $n\in \N$ set
\begin{equation}\label{eq:En}
\mathcal{E}_n(U):= \left \{\text{RW takes }n\text{ consecutive steps in }U \right \}.
\end{equation}
We observe that if $\sum_{y \in U} p(x,y)$ does not depend on $x \in U$ then $\bz=\sum_{y \in U} p(x,y)$ and $\mathbb{P}^x(\mathcal{E}_n(U))=\bz^n$.
Note that $P_U$ is substochastic, but we construct a stochastic matrix $\widetilde P_U$ which rules a RW on $U \cup \{\eth_U\}$, where $\eth_U$ is an absorbing state (the exit from $U$). Namely,
\[
\widetilde{p}_U(x,y):=
 \begin{cases}
  p_U(x,y) & \text{if } x,y \in U \\
  1-\bz & \text{if } x \in U, y=\eth_U\\
  1 & \text{if } x=y=\eth_U\\
  0 & \text{otherwise}.
 \end{cases}
\]

The following result is a version of \cite[Theorem 1.1]{CandelleroHutchcroft} and \cite[Theorems 3.26 and 3.32]{KW2022} (cf.~Theorem~\ref{thm:notes}) for subgraphs.

\begin{Theorem}\label{thm:general-bz-bdary}
Given a nonempty, connected subgraph
$U$,  let $\bz$ be as in \eqref{eq:bz} and $\mathcal{E}_n(U)$ as in \eqref{eq:En}.
Suppose that the RW governed by $P_U$ is transitive.
Then the following hold.
\begin{itemize}
\item[(i)] 
The space $\M(Q_U, 1)$ is homeomorphic to $\M(P_U, \bz)$. 
\item[(ii)] 
Suppose that $\nu$ is such that 
$\Ebrw \left [ L \log L\right ] <\infty$, for $L\sim \nu$.
Almost surely, the process $(\m \bz)^{-n} \mathbb{U}_n$ converges weakly  to a random measure $\mathbf{W}_{\zeta}$ on the space $\M(P_U, \bz)$,
 and it satisfies
\[
\Ebrw_{x} [\mathbf{W}_{\zeta}(A)] =  \Prw_x^{\widetilde P_U}(Y_\infty \in A),\text{ for all Borel sets }A \subseteq \M(P_U, \bz),
\] 
where $\{Y_n\}_n$ denotes a RW governed by $\widetilde P_U$ and $Y_\infty:=\lim_n Y_n$.
\end{itemize}

\end{Theorem}

The paper is organized as follows.
In Section \ref{sec:prelim} we recall some fundamental notions about Martin boundaries and BRWs, and subsequently we define and analyze an auxiliary process that will be helpful throughout this work.
Section \ref{sec:boundarymeasure} is devoted to the discussion of the relationship between the extinction probabilities in subgraphs $U$ and their corresponding boundary.
We provide several examples on trees, Cartesian products and free products.
We conclude with Section \ref{sect:proof} where the reader can find all the proofs of the results claimed above.

\section{Preliminaries}\label{sec:prelim}

In this section, we define the Martin boundary of a RW and recall its relationship with the RW and the BRW (on the whole space). We formally construct the BRW and define survival and persistence. We also recall the definition of $\mathcal F$-BRW, which generalizes the concepts of transitive and quasi-transitive BRW.

\subsection{The Martin boundary}\label{sect:construction}
Here we only recall the fundamental tools that we need in this work, the interested reader is referred to \cite{sawyer_harmonic_functions}, \cite{woess2000} and \cite{Woess-denumerable} for thorough expositions on Martin boundaries.
Let $G$ denote an infinite, locally finite 
graph.
Consider a (sub)stochastic matrix
${P}:=\big ({p}(x,y)\big )_{x,y\in G}$ defined on $G$.
Then for $z\in \mathbb{C}$ let
$
G(x,y\mid z):= \sum_{n\geq 0} {p}^{(n)}(x,y)z^n
$
denote the corresponding Green function; fix a reference vertex $o$ and define the associated Martin kernel by
\[
K(x,y\mid z^{-1}):=\frac{G(x,y\mid z)}{G(o,y\mid z)}.
\]
From now on we shall write 
$G(x,y):= G(x,y|1)$ and $K(x,y) := K(x,y\mid 1)$.
The compactification of $G$ with respect to the convergence of the Martin kernels $K(x,y\mid 1)$
is the Martin boundary $\M({P},1)$.
There are several equivalent ways to construct the Martin boundary, see for example \cite{sawyer_harmonic_functions} and \cite{Woess-denumerable} for details.

Similarly, for a fixed value $t\ge \rho_G$ (as defined in~\eqref{eq:rhoG}), we define the $t$-Martin boundary of ${P}$, that we denote by $\M({P},t)$, as the compactification of $G$ with respect to the convergence of the Martin kernels $K(x,y\mid t)$.
See \cite[Chapter IV]{woess2000} for further details on $t$-Martin boundaries.

\subsection{Martin boundary and branching random walks}\label{sect:constr-propBZ}
In this paper BRWs are defined by the offspring distribution $\nu$ and the transition matrix $P$. This suggests to look for relations between a BRW and the Martin boundary of the RW governed by $P$. We recall the main result in \cite{CandelleroHutchcroft} and \cite{KW2022}, which links the Martin boundary 
of a RW with transition kernel $P$, and the BRW (with same transition kernel) seen as a Markov chain on its space of configurations.

\begin{Theorem}[\cite{CandelleroHutchcroft}, \cite{KW2022}]\label{thm:notes}
Let $Y=\{Y_n\}_n$ be a transient, irreducible Markov chain governed by $P$, let $\nu$ be an offspring distribution with mean $\m>1$ satisfying the \emph{$L \log L$ condition}, and let $\{\B_n\}_{n\geq 0}$ be a BRW with transition kernel $P$ started at some vertex $o$. 
Then $\m^{-n} \B_n$ almost surely converges weakly  to a random measure $\mathbf{W}$ on the Martin compactification $\cM(P,1)$, 
which satisfies
\begin{equation}
\label{eq:ExpectationIdentity}
\Ebrw_{o}[\mathbf{W}(A)]=\Prw^P_o(Y_\infty \in A)
\end{equation}
for every Borel set $A \subseteq \mathcal{M}(P,1)$.
\end{Theorem}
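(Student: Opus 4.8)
The plan is to run the usual supercritical Galton--Watson martingale argument, but simultaneously against every bounded $P$-harmonic test function, and then to assemble the resulting a.s.\ limits into a single random measure on $\cM(P,1)$. First I would fix the Ulam--Harris labelling of the BRW, writing $X_v$ for the position of particle $v$, and record two elementary facts. (a) The total population $Z_n:=\B_n(G)$ is an ordinary Galton--Watson process with mean $\m>1$, so by the Kesten--Stigum theorem --- which applies precisely because of the $L\log L$ hypothesis (cf.\ \cite{Lyons_conceptualProofs}) --- the quantity $W_n:=\m^{-n}Z_n$ converges $\Pbrw_o$-a.s.\ and in $L^1$ to a nonnegative $W$ with $\Ebrw_o W=1$ and $\{W>0\}$ equal, up to a null set, to the survival event; in particular $\{W_n\}_n$ is uniformly integrable. (b) Summing over genealogical paths gives the many-to-one identity $\Ebrw_o\langle\B_n,f\rangle=\m^{n}\,\Erw_o^{P}[f(Y_n)]$ for every bounded $f\colon G\to\R$.

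Next, for a bounded $P$-harmonic $h$ I would consider $M_n^h:=\m^{-n}\langle\B_n,h\rangle$. Conditioning on the $n$-th generation and using that particles reproduce with mean $\m$ and that $Ph=h$, one checks that $(M_n^h)_n$ is a martingale for the natural filtration of the BRW; since $|M_n^h|\le\|h\|_\infty W_n$ it is dominated by a uniformly integrable family, hence converges $\Pbrw_o$-a.s.\ and in $L^1$ to some $M_\infty^h$ with $\Ebrw_o M_\infty^h=h(o)$. The harmonic functions I would feed in are $h_A(x):=\Prw_x^{P}(Y_\infty\in A)$ for Borel $A\subseteq\cM(P,1)$: because $Y$ is transient, the convergence-to-the-boundary theorem for the Martin compactification (\cite[Chapter 7]{Woess-denumerable}) guarantees that $Y_\infty:=\lim_n Y_n$ exists a.s.\ and lies in the minimal (harmonic) Martin boundary $\cM_{\min}(P,1)$, and each $h_A$ is bounded, harmonic, and satisfies $h_A(o)=\Prw_o^{P}(Y_\infty\in A)$.

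Then I would view $\mu_n:=\m^{-n}\B_n$ as a finite measure on the compact metrizable space $\widehat G=G\cup\cM(P,1)$. Its total mass is $W_n\to W$, while for each fixed $x\in G$ one has $\mu_n(\{x\})=\m^{-n}\B_n(x)\to0$ a.s.\ (here one uses the standing assumption $\m\le 1/\rho_G$ together with transience, which makes the $\m$-weighted occupation of $x$ negligible). Hence every weak subsequential limit $\mathbf W$ of $(\mu_n)_n$ --- these exist by compactness of $\widehat G$ --- gives zero mass to every finite subset of $G$, therefore to $G$, and so is carried by $\cM(P,1)$, in fact by $\cM_{\min}(P,1)$. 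Testing against the bounded harmonic functions, and approximating a general Borel $A$ by open neighbourhoods $U_k\downarrow A$ in $\widehat G$ while sandwiching via the Portmanteau inequalities, identifies $\mathbf W(A)=M_\infty^{h_A}$; this limit is independent of the subsequence, so all subsequential limits coincide and $\mu_n\Rightarrow\mathbf W$ a.s. Countable additivity of $A\mapsto M_\infty^{h_A}$ follows from $h_{\bigsqcup_jA_j}=\sum_jh_{A_j}$, the $L^1$-convergence above, and dominated convergence, so $\mathbf W$ is an honest random measure on $\cM_{\min}(P,1)$. Taking expectations, $\Ebrw_o\mathbf W(A)=\Ebrw_o M_\infty^{h_A}=h_A(o)=\Prw_o^{P}(Y_\infty\in A)$, first on a $\pi$-system generating the Borel $\sigma$-algebra of $\cM(P,1)$ and then for all Borel $A$ by Dynkin's lemma, which is the claimed identity.

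The analytic ingredients --- Kesten--Stigum, the many-to-one identity, the martingale property, and convergence of $Y$ to the minimal boundary --- are all standard, so I expect the real obstacle to be the measure-theoretic bookkeeping of the last step: upgrading the a.s.\ subsequential weak limit to a genuine random Borel measure and pinning it down by testing only against bounded harmonic functions, some of which need not extend continuously to all of $\widehat G$. I would handle this by combining the $L^1$-convergence of the $M_n^{h_A}$ (to obtain countable additivity of $A\mapsto M_\infty^{h_A}$) with a neighbourhood-sandwiching argument in $\widehat G$ (to relate $\langle\mathbf W,h_A\rangle$ to $M_\infty^{h_A}$), rather than through any single hard estimate.
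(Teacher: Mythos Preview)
This theorem is not proved in the paper at all: it is quoted verbatim as a result of \cite{CandelleroHutchcroft} and \cite{KW2022}, and the paper only \emph{uses} it (in the proofs of Proposition~\ref{prop:equivalent} and Theorem~\ref{thm:general-bz-bdary}). There is therefore no ``paper's own proof'' to compare your proposal against.

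That said, your sketch is essentially the argument given in the cited references: build the martingales $M_n^h=\m^{-n}\langle\B_n,h\rangle$ for bounded $P$-harmonic $h$, invoke Kesten--Stigum for uniform integrability via $|M_n^h|\le\|h\|_\infty W_n$, and then identify the a.s.\ weak limit of $\m^{-n}\B_n$ on the Martin compactification by testing against the hitting functions $h_A(x)=\Prw_x^P(Y_\infty\in A)$. Two small remarks. First, the step $\mu_n(\{x\})\to 0$ does not need the standing hypothesis $\m\le 1/\rho_G$ of the present paper; in \cite{CandelleroHutchcroft} it is obtained directly from transience of $P$ (each infinite genealogical ray converges to the boundary, so only an asymptotically negligible fraction of particles sit at any fixed $x$), and the cited theorem is stated in that generality. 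Second, the ``measure-theoretic bookkeeping'' you flag is exactly where the work lies in the original proofs: the functions $h_A$ do extend continuously to the minimal boundary, so one can test the weak limit against them directly rather than via a Portmanteau sandwich, which streamlines the identification $\mathbf W(A)=M_\infty^{h_A}$.
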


\subsection{Construction of the BRW induced by $U$, survival and persistence.}
At this point we formally define the BRW,
relying on a construction defined in \cite{Zucca}.
Let us consider a family of probability measures $\{\mu_x^{(G)}\}_{x\in G}$ defined on the set of finitely supported functions from $G$ to $\mathbb{N}$
\[
S:=\Bigl \{ f:G \to \N \text{ such that } \sum_{y\in G}f(y)<\infty \Bigr \}.
\]
Subsequently we define the following update rule, to be applied to each $x\in G$. 
A particle at a site $x\in G$ lives for one unit of time, then with probability $\mu_x^{(G)}$ a function $f_x$ is chosen from $S$.
Then, the particle at $x$ is replaced by $f_x(y)$ particles at each site $y\in G$, independently for each $y$.
%
Observe that $\{\mu_x^{(G)}\}_{x \in G}$ can be chosen arbitrarily. By letting $|f|:=\sum_{y\in G}f(y)$ and defining
\begin{equation}\label{eq:defmu}
	\mu_x^{(G)}(f):=\nu(|f|)
	\frac{(|f|)!}{\prod_w f(w)!} 
	\prod_wp(x,w)^{f(w)}
\end{equation}
we have the process $\{\B_n\}_n $.

We construct the \emph{process induced by} $U\subseteq G $,  which we denote by $\{\mathbb{U}_n(x)\}_n$, as follows.
Clearly, whenever $U\equiv G$ then this construction gives $\{\B_n\}_n$.
More precisely, consider a family $\{f_{i,n,x}\}_{i,n\in \N,x\in G}$ of independent $S$-valued random variables such that for each $x\in G$ the law of $\{f_{i,n,x}\}_{i,n\in \N}$ is $\mu_x^{(G)}$.
%

For all $n\geq 0$ and all $x,w\in U$ we denote by  $\mathbb{U}_n^x(w) $ the number of particles of the auxiliary BRW started at $x$ that are alive at time $n$ at vertex $w$.
If we do not need to keep track of the starting vertex we simply write $\mathbb{U}_n(w) $ for the number of particles alive at $w$ at time $n$.
The initial condition is $\mathbb{U}_0^x(w)=\delta_x(w) $ and 
$\mathbb{U}_n^x(\cdot) $
is defined inductively as
\begin{equation}\label{eq:defBZnxy}
\mathbb{U}_{n+1}^x(w):=\sum_{y\in U} \sum_{i=1}^{\mathbb{U}_{n}^x(y)} f_{i,n,y}(w)= \sum_{y\in U}\sum_{j\geq 0} \mathbf{1}_{\{\mathbb{U}_{n}^x(y)=j\}} \sum_{i=1}^j f_{i,n,y}(w) .
\end{equation}
%
%
Then the expected number of particles sent to vertex $w\in U$ from one particle at $x\in U$  in one unit of time is defined by
\begin{equation}\label{eq:firstmoment}
m_{x,w}:=\sum_{f \in S} f(w) \mu_x^{(G)} (f),
\end{equation}
and the matrix $M:=(m_{x,w})_{x,w \in U}$ is the matrix whose $x,y$-entry is $m_{x,y}$, for all $x,y\in U$.
For $n\in \N$ consider the power $M^n$, and for all $x,w \in U$ let $m_{x,w}^{(n)}$ be their corresponding entry in $M^n$.
For every $n\geq 0$ and all $x,w\in U$ we have
\begin{equation}\label{eq:expectedBZn}
\Ebrw \left [ \mathbb{U}_n^x(w)  \right ] =m_{x,w}^{(n)}.
\end{equation}
Alternatively, the BRW induced by $U$ can be seen as follows.
Define the set
$
S_U:=\{f:U\to\N \, : \, \sum_{y\in U}f(y)<\infty\}
$
and for now let $|f|=\sum_{y\in U}f(y)$ for all $f\in S_U$. 
For all $x \in U$ let $\mu_x^{(U)}$ be a probability measure on $S_U$, such as
\begin{equation}\label{eq:muxinduced}
\mu_x^{(U)}(f):=\sum_{n=|f|}^\infty \nu(n)\left(1-\sum_{w\in U}p_U(x,w)\right)^{n-|f|}
\frac{n!}{(n-|f|)!\prod_w f(w)!}
\prod_wp_U(x,w)^{f(w)}.
\end{equation}
This quantity represents the probability that an individual at site $x$ has exactly $f(y)$ children at site $y$, for all $y\in U$.
The two families $\{\mu_x^{(G)}\}_x$  and $\{\mu_x^{(U)}\}_x$ are 
such that for all $f \in S_U$ and $ x \in U$
\[
\mu^{(U)}_x (f) = \mu^{(G)}_x \left ( \tilde{f} \in S_G \ : \ \tilde{f}\bigr |_U = f\right ) .
\]

\begin{Definition}\label{def:global-local}
We call \emph{global persistence in} $U$ of 
the BRW  $\{\B_n\}_n$ 
the event
$
\{ \sum_{w\in U}\mathbb{U}_{n}(w)>0 , ~ \forall \, n \geq 1 \}
$.
Analogously, we call \emph{local persistence} at $x\in U$ 
the event
$
\{ \limsup_n \mathbb{U}_{n}(x ) >0  \}.
$
When we say that there is \emph{local persistence} without specifying the vertex, we mean that there is local persistence at every vertex.
\end{Definition}
If not otherwise specified, when we say that \emph{the process persists (locally/globally)} or that \emph{there is (local/global) persistence}, we tacitly imply that the corresponding event has positive probability.

Roughly speaking, global persistence of $\{\B_n\}_n $ on $U$ means that the induced BRW $\{ \mathbb{U}_{n}^x\}_n$ has some particles somewhere in $U$ for all $n\geq 0$.
Local persistence of $\{\B_n\}_n $ in $U$ means that there are infinitely many times $n\geq 0$ such that at least a particle of $\{ \mathbb{U}_{n}\}_n$ is at $x$. 
\begin{Definition}\label{def:survival-U}
Given a BRW $\{\B_n\}_n $, we call \emph{survival} in $U$ 
the event
$
\{ \limsup_n \sum_{x\in U}\B_{n}(x)>0 \}.
$ If $U=G$ we speak of \emph{global survival}; while if $U=\{x\}$ is a singleton, we speak of \emph{local survival at $x$} (or, simply, \emph{local survival} if it holds for every singleton). \emph{Extinction} in $U$ is the complement event, that is, when there are a finite number of visits in $U$.
\end{Definition}
If not otherwise specified, when we say that \emph{the process survives (locally/globally)} or that \emph{there is (local/global) survival}, we tacitly imply that the corresponding event has positive probability. Conversely, when we say that \emph{the process dies out}, we mean that the extinction event has probability 1.

Clearly, since $\sum_{w\in U}\mathbb{U}_{n}^x(w) \leq \sum_{w\in U}\B_{n}(w) $ uniformly in $x$, then global persistence in $U$ implies global survival in $U$.
%
%
%

\subsection{$\mathcal{F}$-BRWs.}
Now we introduce the fundamental concept of $\mathcal{F}$-BRW, relying on \cite{Zucca} and \cite{bertacchi-zucca-infinite-type}.
\begin{Definition}\label{def:F-BRW}
We say that a BRW with state space $X$ and reproduction rule governed by $\{\mu_x\}_{x\in X} $ is \emph{projected} on a BRW with state space $Y$ and reproduction rule governed by $\{\tilde{\mu}_y\}_{y\in Y} $ if there exists a surjective map $g: X \to Y$ such that
\[
\forall \, 
f\in S_{Y}, \quad  \tilde{\mu}_{g(x)} ({f}) = \mu_x \Bigl \{ h \in S_X \, : \, \forall y \in Y , \ {f} (y) = \sum_{z \in g^{-1}(y)}h(z) \Bigr \}.
\]
The map $g$ is called the \emph{projection map}.
Furthermore, whenever $ Y$ is finite, we will say that the BRW with state space $X$ and reproduction rule governed by $\{\mu_x\}_{x\in X} $ is 
an $\mathcal{F}$-BRW.
%
\end{Definition}


The fact that $\{\mathbb{U}_n\}_n$ is an $\mathcal{F}$-BRW does not depend on the offspring distribution $\nu$, as we show in the next proposition which is a corollary of a more general result (see Proposition~\ref{prop:fbrwgen} in Section~\ref{sect:proofFBRW}).
\begin{Proposition}\label{prop:fbrw}
The following are equivalent.
\begin{enumerate}[(a)]
\item
The BRW induced by $U$ is an $\mathcal{F}$-BRW for all offspring distributions $\nu$.
\item
The BRW induced by $U$ is an $\mathcal{F}$-BRW for some offspring distribution $\nu_0$ such that $\nu_0(0)<1$.
\item
There exists a surjective map $g:U\to Y$, with $Y$ finite set, such that the quantity $\sum_{w\colon g(w)=y}p_U(x,w)$ only depends on $g(x)$ and $y$.
\end{enumerate}
\end{Proposition}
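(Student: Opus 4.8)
The plan is to prove the cyclic chain of implications $(a)\Rightarrow(b)\Rightarrow(c)\Rightarrow(a)$, since $(a)\Rightarrow(b)$ is trivial (any offspring distribution with $\m>1$ automatically has $\nu(0)<1$, and the hypothesis in $(a)$ quantifies over all $\nu$). The substance lies in $(b)\Rightarrow(c)$ and $(c)\Rightarrow(a)$, and in both directions the key is to understand how the projection condition of Definition~\ref{def:F-BRW} interacts with the explicit formula \eqref{eq:muxinduced} for $\mu_x^{(U)}$.

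\medskip
\noindent\textbf{Proof of $(c)\Rightarrow(a)$.}
Suppose $g:U\to Y$ is surjective with $Y$ finite and $\sum_{w:g(w)=y}p_U(x,w)=:r(g(x),y)$ depending only on $g(x)$ and $y$. I would verify directly that $g$ is a projection map in the sense of Definition~\ref{def:F-BRW} for the BRW induced by $U$, with \emph{any} offspring distribution $\nu$. Given $\underline f\in S_Y$, one must compute $\mu_x^{(U)}\{h\in S_U:\ \forall y,\ \underline f(y)=\sum_{z\in g^{-1}(y)}h(z)\}$ and show it depends on $x$ only through $g(x)$. Starting from \eqref{eq:muxinduced}, I would sum over all $h\in S_U$ compatible with $\underline f$; the multinomial coefficient $\frac{n!}{(n-|f|)!\prod_w f(w)!}\prod_w p_U(x,w)^{f(w)}$ splits as a product over the fibers $g^{-1}(y)$, and summing $\prod_{w\in g^{-1}(y)}\frac{p_U(x,w)^{h(w)}}{h(w)!}$ over all $h|_{g^{-1}(y)}$ with $\sum h(w)=\underline f(y)$ gives $\frac{1}{\underline f(y)!}\bigl(\sum_{w\in g^{-1}(y)}p_U(x,w)\bigr)^{\underline f(y)} = \frac{r(g(x),y)^{\underline f(y)}}{\underline f(y)!}$ by the multinomial theorem. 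Likewise $1-\sum_{w\in U}p_U(x,w)=1-\sum_{y\in Y}r(g(x),y)$ depends only on $g(x)$. Hence the whole expression depends only on $g(x)$, $\underline f$, and $\nu$, so $g$ is a projection map onto a finite state space, i.e. the induced BRW is an $\mathcal F$-BRW; this holds for every $\nu$, giving $(a)$.

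\medskip
\noindent\textbf{Proof of $(b)\Rightarrow(c)$.}
This is the main obstacle. Assume the induced BRW is an $\mathcal F$-BRW for some $\nu_0$ with $\nu_0(0)<1$, via a projection $g:U\to Y$, $Y$ finite. I want to extract condition $(c)$ from the defining identity applied to $\nu_0$. The natural strategy is to plug in well-chosen $\underline f\in S_Y$: take $\underline f=\mathbf 1_{\{y\}}$, a single particle at $y$. Then $\mu_x^{(U)}\{h:\ \sum_{z\in g^{-1}(y')}h(z)=\delta_{y}(y')\ \forall y'\}$ is the probability that the induced offspring configuration places exactly one child in the fiber over $y$ and none elsewhere. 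Using \eqref{eq:muxinduced} with $|f|=1$, this equals $\bigl(\sum_{w\in g^{-1}(y)}p_U(x,w)\bigr)\cdot\sum_{n\ge1}\nu_0(n)\,n\,\bigl(1-\sum_{w\in U}p_U(x,w)\bigr)^{n-1}$. Since $g$ is a projection, the left side depends on $x$ only through $g(x)$; the trouble is disentangling the factor $\sum_{w\in g^{-1}(y)}p_U(x,w)$ from the second ($x$-dependent) factor $\psi(x):=\sum_{n\ge1}\nu_0(n) n\,(1-\delta_x)^{n-1}$, where $\delta_x=\sum_{w\in U}p_U(x,w)$. I would handle this by first testing on the ``total'' configuration: summing the single-particle identity over all $y\in Y$ gives that $\delta_x\cdot\psi(x)$ depends only on $g(x)$ --- actually, better, I would use $\underline f\equiv 0$ to show $\mu_x^{(U)}(\{h\equiv0\})=\sum_{n}\nu_0(n)(1-\delta_x)^n$ depends only on $g(x)$; since $\nu_0(0)<1$ this power series in $(1-\delta_x)$ is strictly monotone in $\delta_x$ on $[0,1]$, so $\delta_x$ itself depends only on $g(x)$. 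Consequently $\psi(x)$ depends only on $g(x)$. If $\psi(x)\neq0$ (which holds since $\nu_0(0)<1$ forces $\nu_0(n)>0$ for some $n\ge1$, hence the $n=1$ or larger term is positive unless $\delta_x=0$ everywhere, a degenerate case one treats separately using the two-particle identity), dividing the single-particle identity by $\psi(g(x))$ shows $\sum_{w\in g^{-1}(y)}p_U(x,w)$ depends only on $g(x)$ and $y$, which is exactly $(c)$. The degenerate sub-case $\delta_x\equiv0$ is impossible when $U$ is a neighborhood of a nonempty set (it has at least one internal edge, or one can just observe $U$ is connected and infinite), or is handled by noting it forces $p_U(x,\cdot)\equiv0$ on $U$ which contradicts $U$ being a connected subgraph with more than one vertex; alternatively one extracts $(c)$ from two-particle test configurations $\underline f=2\cdot\mathbf 1_{\{y\}}$ or $\mathbf 1_{\{y\}}+\mathbf 1_{\{y'\}}$, which involve $\bigl(\sum_{w\in g^{-1}(y)}p_U(x,w)\bigr)^2$ and cross terms and a different $x$-dependent scalar, and combining with the single-particle identity isolates the fiber sums regardless.

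\medskip
I expect the bookkeeping in $(c)\Rightarrow(a)$ (the multinomial factorization over fibers) to be routine, and the genuine difficulty to be the algebraic separation of variables in $(b)\Rightarrow(c)$: proving that the ``scalar'' $x$-dependent factor coming from the $n-|f|$ exponent in \eqref{eq:muxinduced} is constant on $g$-fibers, and dealing cleanly with the edge cases where that scalar could vanish. Once that separation is achieved, matching it against the fiber-sum factor gives $(c)$ immediately. Finally, I would remark that Proposition~\ref{prop:fbrwgen}, quoted as the source of this corollary, presumably proves the vertex-dependent-offspring version by the same test-configuration method, so one may alternatively just cite it; but the self-contained argument above is short enough to include.
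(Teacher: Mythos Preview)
Your approach is essentially the paper's: the paper derives Proposition~\ref{prop:fbrw} from Proposition~\ref{prop:fbrwgen}, whose proof does exactly what you outline --- the multinomial fiber-factorization for $(c)\Rightarrow(a)$, and for $(b)\Rightarrow(c)$ the two test configurations $h=\mathbf 0$ (to pin down $\delta_x$ via strict monotonicity of $z\mapsto\sum_n\nu_0(n)z^n$) and $h=k\delta_y$ (to isolate the fiber sum).

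There is, however, a slip in your edge-case analysis. You claim $\psi(x)=\sum_{n\ge1}\nu_0(n)\,n\,(1-\delta_x)^{n-1}$ can vanish only when $\delta_x=0$; in fact $\delta_x=0$ gives $\psi(x)=\m_0>0$, and the genuine obstruction is $\delta_x=1$ together with $\nu_0(1)=0$, since then every term with $n\ge2$ has $(1-\delta_x)^{n-1}=0$ and the $n=1$ term vanishes too. This case is perfectly possible (interior vertices of $U$, offspring law supported on $\{0,2,3,\dots\}$). The paper's remedy --- which you also gesture at with the ``two-particle'' alternative --- is simply to test $h=k\delta_y$ for a fixed $k\ge1$ with $\nu_0(k)>0$: the scalar factor becomes $\sum_{n\ge k}\nu_0(n)\binom{n}{k}(1-\delta_x)^{n-k}\ge\nu_0(k)>0$ regardless of $\delta_x$, so one can always divide. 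With that correction your argument is complete and matches the paper.
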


In the case when
$\mathbb U_n$ is an $\mathcal F$-BRW,
we are able to
give an explicit expression
for $\m_1(U)$.

\begin{Proposition}\label{prop:monotonicity}
Suppose that the BRW induced by $U$ is an $\mathcal F$-BRW, then there is
positive probability of global persistence on $U$ starting from $x$ if and only if 
\[
\mathbf{m}>\frac{1}{\liminf_{n\to\infty}\sqrt[n]{\sum_{y\in U}p_U^{(n)}(x,y)}}=\m_1(U).
\]
In particular if there is global persistence in $U$, starting from $x$, for the process with an offspring law $\nu_1$ with mean $\bar\nu_1$, then there is global persistence in $U$, starting from $x$, for
any process with offspring law $\nu_2$ with mean $\bar\nu_2 \ge \bar\nu_1$.
\end{Proposition}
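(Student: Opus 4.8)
The plan is to use the $\mathcal{F}$-BRW hypothesis to collapse the (generally infinitely many types) BRW induced by $U$ onto a Galton--Watson process with \emph{finitely} many types, to which the classical survival dichotomy applies, and then to identify the relevant Perron--Frobenius eigenvalue with $\liminf_n\sqrt[n]{\sum_{y\in U}p_U^{(n)}(x,y)}$.

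First I would record the mean matrix of the induced process. By \eqref{eq:firstmoment} and the multinomial form of $\mu_x^{(G)}$, for $x,w\in U$ one has $m_{x,w}=\m\,p(x,w)=\m\,p_U(x,w)$, i.e.\ $M=\m P_U$ on $U$, and hence $\Ebrw\bigl[\mathbb{U}_n^x(w)\bigr]=\m^n p_U^{(n)}(x,w)$ by \eqref{eq:expectedBZn}. Since the BRW induced by $U$ is an $\mathcal{F}$-BRW, Definition \ref{def:F-BRW} together with Proposition \ref{prop:fbrw} provides a surjective map $g\colon U\to Y$, with $Y$ finite, and a finitely many types BRW $\underline{\mathbb{U}}$ on $Y$ onto which $\mathbb{U}$ projects. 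By Proposition \ref{prop:fbrw}(c) the quantity $\overline{p}_{\bar y,\bar y'}:=\sum_{w\colon g(w)=\bar y'}p_U(x,w)$ depends only on $\bar y=g(x)$ and on $\bar y'$, so $\overline{P}:=(\overline{p}_{\bar y,\bar y'})_{\bar y,\bar y'\in Y}$ is a well-defined finite substochastic matrix; a straightforward computation of first moments (using \eqref{eq:firstmoment}, \eqref{eq:muxinduced} and Definition \ref{def:F-BRW}) shows that the mean matrix of $\underline{\mathbb{U}}$ is $\overline{M}=\m\,\overline{P}$. As $g$ preserves the total number of particles, the event $\{\mathbb{U}_n^x>0\ \forall n\ge1\}$ and the event that $\underline{\mathbb{U}}$ started at $g(x)$ survives have the same probability; therefore there is global persistence on $U$ if and only if $\underline{\mathbb{U}}$ survives with positive probability.

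Next I would apply the classical theory of finitely many type Galton--Watson processes (cf.\ \cite{Zucca}, \cite{bertacchi-zucca-infinite-type} and the references therein). Because $U$ is connected and $g$ is onto, $\overline{P}$, and hence $\overline{M}$, is irreducible; let $\lambda$ be the Perron--Frobenius eigenvalue of $\overline{P}$, so that $\overline{M}$ has Perron--Frobenius eigenvalue $\m\lambda$. Since $\m>1$, the process $\underline{\mathbb{U}}$ is non-degenerate, and the dichotomy gives that $\underline{\mathbb{U}}$ survives with positive probability if and only if $\m\lambda>1$, i.e.\ if and only if $\m>1/\lambda$. It then remains to compute $\lambda$. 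Using Proposition \ref{prop:fbrw}(c), an induction on $n$ gives $\sum_{w\colon g(w)=\bar y}p_U^{(n)}(x,w)=(\overline{P}^{\,n})_{g(x),\bar y}$; summing over $\bar y\in Y$ yields $\sum_{y\in U}p_U^{(n)}(x,y)=(\overline{P}^{\,n}\mathbf{1})_{g(x)}$, where $\mathbf{1}$ denotes the all-ones vector on $Y$. Fixing a strictly positive Perron--Frobenius eigenvector $v$ of $\overline{P}$ and using the finiteness of $Y$, one gets constants $0<c\le C$ with $c\,\lambda^{n}\le(\overline{P}^{\,n}\mathbf{1})_{\bar y}\le C\,\lambda^{n}$ for all $\bar y\in Y$ and $n\ge0$, so that $\sqrt[n]{\sum_{y\in U}p_U^{(n)}(x,y)}\to\lambda$ (in particular the $\liminf$ appearing in the statement is a genuine limit). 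Combining this with the previous step proves the equivalence.

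For the ``in particular'' part it suffices to note that the threshold $1/\liminf_n\sqrt[n]{\sum_{y\in U}p_U^{(n)}(x,y)}$ depends only on $U$ and on $P_U$, not on the offspring law, and that by Proposition \ref{prop:fbrw} being an $\mathcal{F}$-BRW is a property of $U$ alone. Hence, if there is global persistence in $U$ for a law $\nu_1$ with mean $m_1$, the equivalence above gives $m_1>1/\liminf_n\sqrt[n]{\sum_{y\in U}p_U^{(n)}(x,y)}$, so any $m_2\ge m_1$ satisfies the same strict inequality, and the equivalence applied to $\nu_2$ yields global persistence for any offspring law with mean $m_2$. The main obstacle I anticipate is the reduction in the first two paragraphs: checking rigorously that the induced (infinitely many types) BRW and its finite projection $\underline{\mathbb{U}}$ share the same survival behavior, and that the hypotheses of the standard multi-type dichotomy are met --- irreducibility of $\overline{M}$ and non-degeneracy of $\underline{\mathbb{U}}$ --- which follow from connectedness of $U$ and from $\m>1$, respectively. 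Once this is in place, the Perron--Frobenius estimate for the growth rate of $\sum_{y\in U}p_U^{(n)}(x,y)$ is routine.
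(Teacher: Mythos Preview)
Your argument is correct and is essentially a self-contained derivation of what the paper obtains by a single citation. The paper's proof is one line: it invokes \cite[Theorem~2.4]{BZ_genfunapproach}, which states that for an $\mathcal F$-BRW global survival is equivalent to $\liminf_n\sqrt[n]{\sum_{y}m^{(n)}_{x,y}}>1$, and then observes that here $m_{x,y}=\m\,p_U(x,y)$, so the condition becomes $\m\cdot\liminf_n\sqrt[n]{\sum_{y\in U}p_U^{(n)}(x,y)}>1$.

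What you do instead is unpack that cited theorem in this special setting: you project via $g$ onto the finite-type process $\underline{\mathbb U}$, apply the classical multi-type Galton--Watson dichotomy to get survival iff the Perron--Frobenius eigenvalue of $\m\overline P$ exceeds~$1$, and then identify that eigenvalue with $\liminf_n\sqrt[n]{\sum_{y\in U}p_U^{(n)}(x,y)}$ via the sandwich $c\lambda^n\le(\overline P^{\,n}\mathbf 1)_{g(x)}\le C\lambda^n$. This buys you a more transparent, self-contained argument (and the bonus observation that the $\liminf$ is in fact a limit), at the cost of having to verify the hypotheses of the finite-type dichotomy yourself. Your verification is fine: connectedness of $U$ (which is a standing assumption in the paper, see \eqref{eq:condition-nbhd}) gives irreducibility of $\overline P$ since $(\overline P^{\,n})_{g(x),g(y)}\ge p_U^{(n)}(x,y)>0$ for suitable $n$, and $\m>1$ rules out the degenerate case. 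The Perron--Frobenius bound on $(\overline P^{\,n}\mathbf 1)$ holds regardless of periodicity because you compare $\mathbf 1$ with the strictly positive right eigenvector, not individual entries of $\overline P^{\,n}$. The ``in particular'' part is handled identically in both approaches.
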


\section{Survival, persistence and boundary measure}
\label{sec:boundarymeasure}

In this section we answer several questions on the relationship between the Martin
boundary and the concepts of survival and persistence. To this aim we construct
various examples, mainly on trees, which are a natural setting since on the one hand, the simple RW on trees gives rise to Martin boundaries with a geometrical construction, and on the other hand, in many cases, $\rho_G<1$,  which implies that the RW is transient and, when $1 <\m \le 1/\rho_G$, the BRW is transient as well.
In Section~\ref{subsec:topologytree} we focus on the topology of the Martin boundary of a nearest neighbor RW $P$ on a tree.
In Sections~\ref{subsec:nullandpersistence} and \ref{subsec:densenosurv} we provide examples showing that the natural intuition that a BRW and the underlying RW might show very similar asymptotic behaviors is misleading.
More precisely, in Section~\ref{subsec:nullandpersistence} we find examples of subgraphs $U$ whose boundaries have harmonic measure $0$, nevertheless the BRW persists in $U$. Conversely, in Section~\ref{subsec:densenosurv} we find examples of subgraphs $U$ whose boundaries have positive harmonic measure, but the BRW neither survives nor persists in $U$. Furthermore, in Section~\ref{subsec:densenosurv} we investigate the relationship between some properties of the boundary of a subset $U$ and the extinction probabilities of the BRW in $U$.
In Section~\ref{subsec:tosumup} we deal with the relationship between extinction probabilities in subset and the boundary of the subset.

Unless otherwise specified, in this section we consider 
\textit{edge-breeding} BRWs, which are defined as follows.
Given $\lambda>0$, every particle at $x\in G$ has $n$ children ($n \in \mathbb{N}$) with probability $\nu_x(n):=(\lambda d_x)^n /(1+\lambda d_x)^{n+1}$, where $x$ has exactly $d_x$ neighbors. Each child is placed independently and uniformly on the neighbors.
This is the discrete-time counterpart of the continuous-time edge-breeding BRW on $G$. The offspring law at $x$ is a geometric distribution with parameter $\lambda d_x$ and $P$ is the transition kernel of the simple RW on $G$.
In particular, when $G$ is the homogeneous tree $\mathbb{T}_d$, then $\nu$ does not depend on $x$  and when $\lambda \in (1/d, \, 1/2\sqrt{d-1}]$ there is global survival w.p.p~and a.s.~local extinction; unless told otherwise, this will be our choice in the rest of this section every time we deal with a homogeneous tree.
These critical parameters can be deduced from \cite[Proposition 4.33]{Bertacchi-Zucca-book} and in this case
$\m=\lambda d \in(1,1/\rho_G]$.
Note that the restriction of an edge-breeding BRW to a subgraph is the edge-breeding BRW on that subgraph; thus if we consider 
$G=\mathbb T_d$ and $U\subseteq \mathbb T_d$, the breeding law $\nu$ is constant on $G$ and is given by the above expression
$\nu_x$ on $U$, provided that $d_x$ identifies the number of neighbors of $x$ in $U$.

\subsection{The topology and the Martin Boundary of a tree}\label{subsec:topologytree}

Given a tree $\mathbb{T}$, let $P$ be a nearest neighbor transition matrix
 (take for instance the transition matrix $P$ of a simple RW).
We fix a reference vertex $o\in\mathbb T$ and call it the root of the tree.
Paths in the tree are maps  $\varphi \in \mathbb{T}^\mathbb{N}$ such that  $(\varphi(i), \varphi(i+1))$ is an edge for all $i \in \mathbb{N}$. With a slight abuse of notation, we also write $\varphi \subseteq A\subseteq\mathbb T$,
 meaning that we identify $\varphi $ with its image.
We denote by $\varphi_{x,y}$ the shortest path from $x$ to $y$ and by
$d(x,y)$ its length (that is, the distance between $x$ and $y$); the shortest path from $x$ to $y$ in a tree is the only path from $x$ to $y$ without repeated vertices.
Sometimes, it is also referred to as the \emph{geodesic} path.

The Martin boundary of $\mathbb{T}$ can equivalently (up to homeomorphisms) be defined as the collection of all the infinite injective paths from $o$ (thus $\varphi(0)=o$). These paths are called (geodesic) \textit{rays}.
This construction does not depend (up to homeomorphisms) on the choice of the root $o$ (see \cite[Chapter 9]{Woess-denumerable} and references therein). 
Since every vertex $x$ in $\mathbb{T}$ can be identified with $\varphi_{o,x}$ we have that the Martin compactification is (up to homeomorphisms) the collection of all finite or infinite injective paths from $o$. 
We denote the Martin compactification of $\mathbb T$ by $\widehat{ \mathbb{T}}= \mathbb{T} \cup \mathcal{M}(P,1)$.

Since our goal is to describe the behavior of the BRW in sets $U$ with a certain boundary, or in sets which are neighbors of fixed subsets
of $\mathcal{M}(P,1)$, we need to understand the topology of $\widehat{ \mathbb{T}}$.

 Given a subset $A \subseteq \widehat{\mathbb{T}}$ we denote by $\widehat A$ its topological closure and by $\partial A$
its boundary
(which is the intersection of its closure and the closure of its complement).
It is clear that $\partial \mathbb T=\mathcal M(P,1)$ and 
$\partial A = \widehat A \cap \mathcal{M}(P,1)$.  Moreover, $\partial A$ is always closed in $\mathcal{M}(P,1)$ with the induced topology; conversely, any closed subset of $\mathcal{M}(P,1)$ is the boundary of a subset of $\mathbb{T}$ (see Lemma~\ref{lem:boundaryA}).

Define $\mathbf{T}_x \subseteq \mathbb{T}$ as the subtree given by the vertices $y$ such that $x$ belongs to the path $\varphi_{o,y}$. 
A countable base for the topology of $ \widehat{\mathbb{T}}$  is the collection 
$\{\{x\} \colon x \in \mathbb{T} \}
\cup \{\widehat{\mathbf{T}_x} \colon x \in \mathbb{T} \}$.
As a consequence, a countable base for the induced topology on $\mathcal{M}(P,1)$ is the family $\{\partial \mathbf{T}_x \colon x \in \mathbb{T} \}$.
Given two basic open sets 
$A$ and $B$, then either $A \subseteq B$ or $B \subseteq A$ or $A \cap B=\emptyset$; therefore, every open set of $\widehat{\mathbb{T}}$ (resp.~$\partial \mathbb{T}$) is a countable union of pairwise disjoint elements of the base.
Note that $\widehat{\mathbf{T}_x}$
(resp.~$\partial \mathbf{T}_x$) and the elements of the base are both open and closed.
A generic open neighborhood of $\partial \mathbb{T}$ is $\widehat{\mathbb{T}}\setminus A$ where $A$ is a finite subset of $\mathbb{T}$.
Moreover, if $A$ is such that, for every $x \in \mathbb{T}$, there exists $y \in A$ such that $y \in \mathbf{T}_x$, then $\partial A= \partial \mathbb{T}$. 

The subtree $\mathbf{T}_x \subseteq \mathbb{T}$ branching from $x$ is the set of vertices $y$ such that $x$ belongs to the path $\varphi_{o,y}$. 
We denote the Martin compactification by $\widehat{ \mathbb{T}}= \mathbb{T} \cup \mathcal{M}(P,1)$.
Given a subset $A \subseteq \widehat{\mathbb{T}}$ we denote by $\widehat A$ its topological closure and by $\partial A = \widehat A \cap \widehat {A^\complement}$ its boundary (where the complement is taken in the set $\widehat{\mathbb{T}}$). Since the induced topology on $\mathbb{T}$ is the discrete topology, then it is clear that $\partial A = \widehat A \cap \mathcal{M}(P,1)$.  Clearly $\partial A$ is always closed in $\mathcal{M}(P,1)$ with the induced topology; conversely, any closed subset of $\mathcal{M}(P,1)$ is the boundary of a subset of $\mathbb{T}$ (see Lemma~\ref{lem:boundaryA}).

\subsection{Null measure at the boundary and persistence}\label{subsec:nullandpersistence}

In this section we find examples of infinite sets $U$ where the BRW 
persists in a subset $U$,
but the RW has zero probability of staying there forever.
In other words, there are infinitely (uncountably) many trajectories of BRW that are not typical trajectories of the underlying RW.


\begin{Example}[\textbf{GW tree}]
\label{exmp:GWtree}
Consider a tree $\mathbb{T}_{\{n_i\}_i}$ where every vertex at distance $i$ from the root $o$ has $n_i \ge 1$ neighbors at distance $i+1$. Let $S_i$ denote the set of vertices at distance $i$ from $o$; clearly $|S_i| = \prod_{j=0}^{i-1} n_j$. 

Suppose that the simple RW on $\mathbb{T}_{\{n_i\}_i}$ is transient and let $\widetilde \gamma_o$ be
 the associated measure on the Martin boundary  $\partial \mathbb{T}_{\{n_i\}_i}$.
In this case the value of $\widetilde \gamma_o$ on the borders of subtrees branching from fixed vertices is
 $\widetilde \gamma_o(\partial \mathbf{T}_x) =1/|S_i|$ for every $x \in S_i$.
\\
Fix $p  \in [0,1]$ and denote by $\Upsilon$ the GW tree with root $o$ identified by the connected component (containing $o$) of a 
$p$-Bernoulli percolation on 
$\mathbb{T}_{\{n_i\}_i}$. Clearly, if $x \in S_i$, then the probability that $x \in \Upsilon$ is $p^j$. Suppose that $p$ is sufficiently
 large so that the percolation cluster is infinite with positive probability.
\\
It is easy to show, by induction on $j$, that $\widetilde{\mathbb{E}}[\widetilde \gamma_o(\partial \Upsilon)] \le p^j$ (where $\widetilde{\mathbb{E}}$ is the expectation with respect to the GW distribution). This implies that $\mathbb{E}[\widetilde \gamma_o(\partial \Upsilon)] =0$, therefore $\widetilde \gamma_o(\partial \Upsilon)=0$ for almost every realization of the GW tree. 
We observe that the same result holds if we open edges from  $S_i$ to $S_{i+1}$  independently with probability $p_i$ where  $\sum_{i=0}^\infty (1-p_i)=+\infty$.

Now take $n_i:=d-1$, $d \ge 3$.
Then $\mathbb{T}_{\{n_i\}_i}$ is a subtree of the homogeneous tree $\mathbb{T}_d$ (the root $o$ has $d-1$ neighbors, all other vertices have $d$ neighbors). Denote by $\gamma_o$ the harmonic measure associated to the simple RW on $\mathbb{T}_d$; clearly $\gamma_o(\partial A)= (1-1/d) \widetilde \gamma_o(\partial A)$ for all $A \subseteq \mathbb{T}_{\{n_i\}_i}$ (where $\widetilde \gamma_o$ is the measure on $\partial \mathbb{T}_{\{n_i\}_i}$ defined above). We consider the edge-breeding BRW on $\mathbb{T}_d$ (which is an $\mathcal{F}$-BRW). 
It is easy to prove, by a coupling argument, that if $\lambda \le 1/2\sqrt{d-1}$ then almost surely there is no local persistence at any $x$ for almost every realization of $\Upsilon$.
Furthermore, according to \cite[Proposition 2.6]{PemantleStacey}, if $\lambda \ge 1/(1+(d-1)p)$ then there is global persistence with positive probability on almost every infinite realization of $\Upsilon$ (by using the notation of \cite[Proposition 2.6]{PemantleStacey}, the value of $m$ in our case is $(d-1)p$). Thus if $(2 \sqrt{d-1}-1)/(d-1) < p <1$ and $\lambda \in [ 1/(1+(d-1)p),\,  1/2\sqrt{d-1}]$ then 
 $\Upsilon$ is infinite with positive probability and, in almost every infinite  realization of $\Upsilon$, 
there is global persistence (hence survival) w.p.p.;  nevertheless, $\widetilde \gamma_o(\partial \Upsilon)=0$, that is $\gamma_o(\partial \Upsilon)=0$ (since $\Upsilon \subseteq \mathbb{T}_{\{n_i\}_i}$). Note that every realization of $\Upsilon$ is connected and its boundary $\partial \Upsilon$ is almost surely nowhere dense.
\end{Example}

While the above example applies to $\mathbb{T}_d$ for every $d \ge 3$, there is a deterministic example of subtree in $\mathbb{T}_d$, which has zero-measure boundary, and 
 where the BRW persists globally when $d \ge 6$.

\begin{Example}[\textbf{Pruning a homogeneous tree}]
\label{exmp:pruningtree}
Given a strictly increasing sequence of integers $\{k_i\}_{i \in \mathbb{N}}$ and a homogeneous tree $\mathbb{T}_d$,
consider the following  construction.
Recursively remove one edge (and the corresponding subtree) going from $x \in S_{k_i}$ to $S_{k_i+1}$ for every $x \in S_{k_i}$; let us call $\mathbb{T}'$ the resulting pruned tree. As in the previous example, denote by $\gamma_o$ the harmonic measure associated to the simple RW on $\mathbb{T}_d$. By using a similar argument as in the case of the GW trees, one can prove by induction that $\gamma_o(\partial \mathbb{T}') \le (d-1)^i/d^i$ for all $i \in \mathbb{N}$, since at every step when we remove one edge from each vertex on $S_{k_i}$ we reduce the boundary by a multiplicative factor $(1-1/d)$; thus $\gamma_o(\partial \mathbb{T}')=0$. 
Consider now the tree $\mathbb{T}_{d-1}$ as a subtree of $\mathbb{T}_d$; it can be obtained by the above construction when $k_i=i$ for all $i \in \mathbb{N}$. 
As as subset of $\mathbb{T}_{d}$ we then have
$\gamma_o(\partial \mathbb{T}_{d-1})=0$. We know that if $1/2\sqrt{d-1} \ge \lambda > 1/(d-1)$ then the edge-breeding BRW on $\mathbb{T}_d$   persists globally on $\mathbb{T}_{d-1}$ and dies out locally on $\mathbb{T}_d$; the existence of such a $\lambda$ is guaranteed when $d \ge 6$. Again, the subset in this example is a connected subtree and its boundary is nowhere dense.
\end{Example}

Another example is given by the cartesian product of two trees $\mathbb T_{k}$ and $\mathbb T_{j}$: in the product, each component has zero-measure boundary, but for sufficiently large $\m$, one might have positive probability of persistence in the single component.
The particular choice $k=3$, $j=100$ guarantees the existence of a range for $\m$ such that the BRW survives globally but 
not locally (and where we have positive probability of persistence in at least one of the components).

\begin{Example}[\textbf{Cartesian product of different trees}]
\label{exmp:cartesian}
Consider the two trees $\T_3$ and $ \T_{100}$.
For $j\in \{3,100\}$ let $\partial \T_j$ denote the end compactification of $\T_j$, and define $\widehat{\T}_j:=\T_j \cup \partial \T_j $.
Take $G:=\T_3\times \T_{100}$ and consider 
 $U_1=\widehat{\T}_3\times o_{\T_{100}}$ and $U_2=o_{\T_3} \times \widehat{\T}_{100}$.
Let $P:=\frac{3}{103 }P_1 + \frac{100}{103} P_2$, where $P_i$ denotes the transition kernel of the simple RW on $U_i$.

With our notation we obtain $\rho_{U_1}=\frac{3}{103}\phi_{U_1}$ and $\rho_{U_2}=\frac{100}{103}\phi_{U_2}$, where $\phi_{U_1}=\frac{2\sqrt{2}}{3}$ and $\phi_{U_2}=\frac{2\sqrt{99}}{100}$, and $\rho_G=\frac{3}{103}\phi_{U_1}+\frac{100}{103}\phi_{U_2}$.
Thus, $\rho_G^{-1}\approx 4.5$, and
$\frac{\phi_{U_1}}{\rho_{U_1}}= \frac{103}{3}$ and $\frac{\phi_{U_2}}{\rho_{U_2}}= \frac{103}{100}=1.03<\rho_G^{-1}$.
By what said above, the range $\phi_{U_2}/\rho_{U_2}<m\le \rho_G^{-1}$
is non-empty.
Since the BRW induced by $U_2$ is an $\mathcal{F}$-BRW, by Lemma \ref{lemma:step3} we have $\m_1(U_2)=\phi_{U_2}/\rho_{U_2}$.
As a consequence, for $\m > \m_1(U_2)$, we have positive probability of persistence in $U_2$ and in this regime $\{\B_n\}_n$ is still transient.

It is known from \cite{picardello_woess} that by letting $\mathcal{S}_1:=\{(t_1,t_2)\in [\phi_{U_1}, \infty )\times [\phi_{U_2}, \infty) \  : \ \frac{3}{103}t_1+\frac{100}{103}t_2=1 \}$, one has
\[
\M(P,1)= \left ( \partial \T_3 \times \partial \T_{100} \times \mathcal{S}_1 \right ) \cup \left ( \partial \T_3 \times \T_{100} \right ) \cup \left ( \T_3 \times \partial \T_{100} \right ),
\]
with $\gamma_o(o_{\T_3} \times \partial \T_{100})= \gamma_o(\partial U_2)=0$.
\end{Example}

Finally, one can see the same phenomenon in BRWs on the Cayley graph of a free product of finitely generated groups.

\begin{Example}[\textbf{Free product of groups}]
\label{exmp:freeproduct}
In this context we recall the basics of the construction of a \emph{free product of two groups}, the interested reader is referred to \cite{woess2000} for more details.
Consider two finitely generated groups $\Gamma_1, \Gamma_2$ (so that $|\Gamma_1| \geq 2, |\Gamma_2| >2$), with identity elements $e_1, e_2$ respectively, and let $S_i$ denote the set of generators of $\Gamma_i$, $i\in \{ 1,2\}$.
We see elements of $S_i$ as ``letters'' and concatenation of letters as ``words''; we only consider \emph{reduced} words, i.e., without cancellations such as $aa^{-1}$.

The free product $\Gamma:=\Gamma_1 \ast \Gamma_2$ is the set of all (reduced) words of the form $x_1 x_2 \cdots x_n$, where $x_1, x_2, \ldots, x_n \in \bigcup_{i=1}^2 \Gamma_i\setminus \{e_i\}$ and so that $x_i\in \Gamma_1 ~(\text{resp.\ }\Gamma_2)~ \Rightarrow x_{i+1}\in \Gamma_2~(\text{resp.\ }\Gamma_1)~$ for all $i\in \{1 , \ldots , n-1\}$.
An intuitive way to visualize $\Gamma$ is as follows: consider a copy of $\Gamma_1$ and to each vertex $v\in \Gamma_1\setminus \{e_1\}$ attach a copy of $\Gamma_2$ by ``gluing'' (i.e., identifying) $e_2$ and $v$ into a single vertex.
Then, inductively, for every vertex on each copy of $\Gamma_2\setminus \{e_2\}$ attach a copy of $\Gamma_1$, for every vertex on each copy of $\Gamma_1\setminus \{e_1\}$ attach a copy of $\Gamma_2$ and so on.
This construction gives rise to  a ``\emph{cactus-like}'' structure, which whenever all factors $\{\Gamma_i\}_{i=1}^2$ are finite, turns into a \emph{tree-like} structure.
 In the latter case, $\Gamma$ is hyperbolic.

Let $\Gamma_1$ be a finitely-generated group and $\Gamma_2$ be an \emph{infinite}, finitely-generated, \emph{non-amenable} group (not a tree, because trees make this discussion less interesting).
The obtained structure is still a group, and it is non-amenable (see e.g.\
\cite{woess2000}), its identity element is identified with $e_1$ and $e_2$ and simply denoted by $e$.
Let $\mu_1$ and $\mu_2$ denote probability measures defined on the set of generators $S_1$ and $S_2$ respectively, so that they induce (irreducible) nearest neighbor RWs on $\Gamma_1$ and $\Gamma_2$ respectively.
Fix $\alpha\in (0,1)$ and let
$
\mu:=\alpha \mu_1 + (1-\alpha)\mu_2
$
define a nearest neighbor RW 
on $\Gamma=\Gamma_1\ast \Gamma_2$.

Consider $U$ to be a fixed copy of $\Gamma_2$.
From the definition of the process and Corollary \ref{cor:m1+}
(using vertex-transitivity of $U$), we have that
\[
\m_1=\left ( \Prw_e(\mathcal{E}_1(U))\right )^{-1}
=(1-\alpha)^{-1}.
\]
It is known (cf.\ \cite{woess2000} and references therein) that $\gamma_e(\partial U)=0$, however, whenever $\m > (1-\alpha)^{-1}$ there is positive probability of persistence in $U$.
\end{Example}

\subsection{Is the boundary of a set related to survival of BRW?}
\label{subsec:densenosurv}

In the previous section we showed that even if the boundary is ``small'' then we can have persistence.
At this point, one can wonder whether ``large'' sets imply persistence, or at least survival for BRW.
In this section we find examples where the answer is negative.
More precisely, one may have sets with different boundaries and equal extinction probability vectors 
or sets with equal boundary and different extinction probability vectors 
(see Proposition \ref{prop:densenopersistence} and Remark~\ref{rem:densenopersistence}, respectively).

Recall that $\mathbf{q}(x, A)$ is the probability of extinction in $A$ starting with one particle at $x$. We denote by $\mathbf{q}(A)$ the vector $(\mathbf{q}(x,A))_{x \in G}$; note that $\mathbf{q}(x,\emptyset)=1$ for all $x\in G$.

The following proposition tells us that, given any subset $B$ of $\mathbb{T}_d$,
there exists another subset  $A_B$ whose boundary is $\partial \mathbb{T}_d$ with the same extinction probabilities as $B$.
\begin{Proposition}\label{prop:densenopersistence}
Consider the edge-breeding BRW on $\mathbb{T}_d$ with $1/d < \lambda \le 1/2\sqrt{d-1}$.
 Let $B\subseteq \mathbb{T}_d$ (including $B=\emptyset$) then there exists $A_B \subseteq \mathbb{T}_d$ such that $\partial A_B=\partial \mathbb{T}_d$ and $\mathbf{q}(B)=\mathbf{q}(A_B)$.
\end{Proposition}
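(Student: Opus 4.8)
The plan is to build $A_B$ by enriching $B$ with a sparse but dense collection of finite ``stubs'' that are placed far enough from the root that they cannot affect survival, while ensuring the resulting set has full boundary $\partial\mathbb T_d$. Concretely: enumerate the vertices of $\mathbb T_d$ as $\{x_i\}_{i\in\mathbb N}$, and for each $i$ pick a vertex $y_i\in\mathbf T_{x_i}$ with $d(o,y_i)$ large (how large is chosen below) such that $y_i\notin B$ and $\mathbf T_{y_i}\cap B=\emptyset$ — this is possible because $B^\complement$ contains, for every $i$, an entire subtree inside $\mathbf T_{x_i}$ unless $B\supseteq\mathbf T_{x_i}$; in the latter case we may simply take $y_i=x_i$ since then $\partial\mathbf T_{x_i}\subseteq\partial B$ already. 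Set $A_B:=B\cup\bigcup_i\{y_i\}$. By the characterization of $\partial A$ recalled before Lemma \ref{lem:boundaryA} (namely $\varphi\in\partial A$ iff $A\cap\mathbf T_{\varphi(i)}\neq\emptyset$ for all $i$), the fact that every $x_i$ has some $y_j\in\mathbf T_{x_i}$ in $A_B$ forces $\partial A_B=\partial\mathbb T_d$.

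The core of the argument is then to show $\mathbf q(x,A_B)=\mathbf q(x,B)$ for every $x$. One inequality is trivial: $A_B\supseteq B$, so local extinction in $A_B$ implies local extinction in $B$, giving $\mathbf q(x,A_B)\le\mathbf q(x,B)$. For the reverse, I would argue that with probability one the extra vertices $\{y_i\}$ are visited only finitely often, so adding them cannot decrease the local-extinction probability. Here is where the choice of the depths $d(o,y_i)$ enters: since the BRW is supercritical but transient, by the Kesten--Stigum regime (condition (ii) on $\nu$) the expected number of particles ever visiting a vertex $z$ decays like $(\m\rho_{\mathbb T_d})^{?}$... more robustly, one uses that $\sum_z\Ebrw_x[\#\{n:\B_n(z)>0\}]\cdot(\text{something summable})$; the cleanest route is: $\Ebrw_x[\sum_n\B_n(z)]=\sum_n\m^n p^{(n)}(x,z)=G(x,z\mid\m)<\infty$ because $\m\le1/\rho_{\mathbb T_d}$ and $G(x,z\mid t)<\infty$ for $t<1/\rho_{\mathbb T_d}$, with the boundary case $\m=1/\rho_{\mathbb T_d}$ handled by the usual spherical-function estimate $p^{(n)}(x,z)\le C\,\rho_{\mathbb T_d}^n\,\mathrm{poly}(n)$ on trees (e.g.\ \cite[Chapter II]{woess2000}). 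On a tree, $G(x,z\mid\m)\le C\,r^{d(x,z)}$ for some $r<1$ depending only on $\m$ and $d$. Hence if we choose $d(o,y_i)\ge i$ (say), then $\sum_i\Ebrw_x[\sum_n\B_n(y_i)]\le\sum_i C\,r^{d(x,y_i)}<\infty$, so by Borel--Cantelli only finitely many $y_i$ are ever hit, and each is hit finitely often; therefore almost surely $\{n:\sum_{z\in A_B}\B_n(z)>0\text{ i.o.}\}=\{n:\sum_{z\in B}\B_n(z)>0\text{ i.o.}\}$ up to a null set, giving $\mathbf q(x,A_B)\ge\mathbf q(x,B)$. (A small care: the definition of $\mathbf q(x,A)$ here is local extinction — finitely many visits to $A$ — so the event really is governed by whether the $y_i$-stubs are visited infinitely often, and the above handles exactly that.)

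The main obstacle I anticipate is the boundary case $\m=1/\rho_{\mathbb T_d}$, where the naive bound $G(x,z\mid\m)<\infty$ is not immediate and one must invoke the sub-exponential correction to $p^{(n)}(x,x)\asymp\rho_{\mathbb T_d}^n n^{-3/2}$ (local limit theorem on trees) to get summability of $\Ebrw_x[\sum_n\B_n(y_i)]$ over $i$; picking $d(o,y_i)$ to grow fast enough (e.g.\ superlinearly) absorbs any polynomial factor and keeps the series summable. A secondary point to get right is the degenerate case $B\supseteq\mathbf T_{x_i}$ for some $i$: there one should not insert a new far vertex inside $\mathbf T_{x_i}$ (it would already be ``covered''), so the construction of $\{y_i\}$ must be done by first discarding those indices $i$ for which $\partial\mathbf T_{x_i}\subseteq\partial B$ already, and running the stub construction only over the remaining indices — this does not harm density of $\partial A_B$ because those discarded indices contribute their full boundary piece anyway. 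Everything else is routine.
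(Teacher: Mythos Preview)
Your strategy coincides with the paper's: enumerate the vertices, drop one sufficiently deep ``stub'' in each cone $\mathbf T_{x_i}$, and use a summability estimate plus Borel--Cantelli to see that the stubs are visited only finitely often, so $\mathbf q$ is unchanged. The difference is in the decay estimate and in how general $B$ is handled. The paper works specifically with the edge-breeding BRW and proves a dedicated lemma showing $1-\mathbf q_0(o,\{x\})\le c\,\delta^{d(o,x)}$ via the explicit fixed-point recursion for $\mathbf q_0$ on $\mathbb T_d$; it then treats $B=\emptyset$ first and deduces the general case from the inclusion--exclusion bound $\mathbf q(A_\emptyset\cup B)\ge \mathbf q(A_\emptyset)+\mathbf q(B)-1$. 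Your route through first moments, $\Ebrw_x\bigl[\sum_n\B_n(z)\bigr]=G(x,z\mid\m)\le C\,r^{d(x,z)}$, is both shorter and more general (it does not use the specific offspring law), and your direct construction of $A_B=B\cup\{y_i\}$ avoids the two-step reduction.

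One correction: the side conditions $y_i\notin B$ and $\mathbf T_{y_i}\cap B=\emptyset$ are neither needed nor correctly justified. It is \emph{not} true that $B\not\supseteq\mathbf T_{x_i}$ forces $B^\complement$ to contain a full subtree of $\mathbf T_{x_i}$ (take $B$ to be the even-level vertices). But none of this matters: your later argument only uses that $\sum_i G(x,y_i\mid\m)<\infty$, which holds for any choice of $y_i\in\mathbf T_{x_i}$ with $d(o,y_i)\ge i$, regardless of whether $y_i\in B$. Simply drop those conditions and the degenerate-case discussion; the proof then goes through cleanly, including at $\m=1/\rho_{\mathbb T_d}$ since simple random walk on $\mathbb T_d$ is $\rho$-transient (so $G(\cdot,\cdot\mid 1/\rho_{\mathbb T_d})<\infty$ and still decays as $F(o,x\mid 1/\rho_{\mathbb T_d})=(d-1)^{-d(o,x)/2}$).
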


\begin{Remark}\label{rem:densenopersistence}
   Proposition~\ref{prop:densenopersistence} shows that there are sets with different boundaries and equal extinction probabilities. However the same proposition yields the existence of sets with equal boundaries and different extinction probabilities.
   Indeed let $B_1, B_2 \subseteq \mathbb{T}_d$ such that $\mathbf{q}(B_1) \neq \mathbf{q}(B_2)$. Consider $A_{B_1}$ and $A_{B_2}$; $\mathbf{q}(A_{B_1}) \neq \mathbf{q}(A_{B_2})$ but $\partial A_{B_1}=\partial \mathbb{T}_d= \partial A_{B_2}$.
\end{Remark}

It turns out (see Section \ref{sect:proofdensenopersistence}) that the set $A_B$ might not be connected. 
If $A$ and $B$ are connected subsets of a generic tree, then having equal boundaries implies equal extinction probability vectors as the following proposition shows.
A partial converse is given by Proposition \ref{pro:survivalpositivemeasure}.

\begin{Proposition}\label{pro:connectedequalboundary}
Consider a tree $\mathbb{T}$ and a BRW with a nearest neighbor transition kernel.
If $A \subseteq  \mathbb{T}$, then there exists a connected subset $B \subseteq \mathbb{T}$ such that $\partial A=\partial B$.
Moreover, if $A$ and $B$ are connected subsets of $\mathbb{T}$ and $\partial A = \partial B$ then $\mathbf{q}(A)=\mathbf{q}(B)$. 
\end{Proposition}

The set $A_\emptyset$ in Proposition~\ref{prop:densenopersistence} is an example of a set with very large boundary, namely $\partial \mathbb{T}_d$, and a.s.~extinction ($\mathbf{q}(A_\emptyset)=\mathbf 1$).
It is worth pointing out that
in a connected subset of $\mathbb{T}$ whose boundary contains an open set (in the topology induced on the boundary) there is trivially positive probability of persistence.
Indeed if a set $A$ is connected and dense in an open subset of the boundary, it is dense in $\partial \mathbf{T}_x$ for some $x$. Whence, for all $y \in \mathbf{T}_x$ there exists $a_y \in \mathbf{T}_y \cap A$. Since $A$ is connected, then all vertices $\{a_y\}_{y \in \mathbf{T}_x}$ are connected by a path inside $A$, thus $x \in A$ and $y \in A$ for all $y \in \mathbf{T}_x$. Since $A \supseteq \mathbf{T}_x$ and there is positive probability of persistence in $\mathbf{T}_x$ then there is positive probability of persistence in $A$.
One can wonder if any closed set on the boundary with positive measure contains an open set.
Surprisingly, the answer is negative; indeed, according to the following remark, a nowhere dense subset of the boundary can have a measure arbitrarily close to 1.

\begin{Remark}[\textbf{Open dense subset with arbitrarily small measure}]
\label{rem:opendensesmall}

It is not difficult to see that a subset of the boundary of a tree $\mathbb{T}$ is open and dense if and only if it
is equal to $\bigcup_{y \in A} \partial \mathbf{T}_y$ for some $A \subseteq \mathbb{T}$ with the property that for all $x \in \mathbb{T}$ there exists $y \in A$ such that either $x \in \mathbf{T}_y$ or $y \in \mathbf{T}_x$. 

Consider now the tree $\mathbb{T}:=\mathbb{T}_{\{n_i\}_i}$ and the measure $\gamma_o$ as defined above. 
Suppose that $n_i \ge 2$ for infinitely many $i \in \mathbb{N}$. 
 Let us fix $\epsilon >0$ and let $\{\bar n_i\}_{i \in \mathbb{N}}$ be such that $\sum_{i \in \mathbb{N}} 1/|S_{\bar n_i}| \le \epsilon$. Consider an arbitrary ordering $\{x_i\}_{i\in \mathbb{N}}$ of the vertices of $\mathbb{T}$; let us fix a sequence $\{y_i\}_{i \in \mathbb{N}}$ of vertices such that $y_i \in \mathbf{T}_{x_i}$ and $d(0, y_i) \ge \bar n_i$. The set 
$B:=\bigcup_{i \in \mathbb{N}} \partial \mathbf{T}_{y_i}$ is open and dense and $\gamma_o(B) \le \sum_{i \in \mathbb{N}} \gamma_o(\partial \mathbf{T}_{y_i}  )
\le \sum_{i \in \mathbb{N}} 1/|S_{\bar n_i}| \le \epsilon$. Clearly if $A:=\bigcup_{i \in \mathbb{N}}\{y_i\}$ then $\partial A=\partial \mathbb{T}$.
This means that there exists an open and dense subset in the boundary with $\gamma_o$-measure arbitrarily close to $0$. By taking the complement set, we obtain a closed, nowhere dense subset in the boundary with $\gamma_o$-measure arbitrarily close to $1$.
\end{Remark}

For a generic connected subset whose boundary has  positive measure, positive probability of survival is guaranteed by Theorem~\ref{prop:equivalent}(iii)
(generalized by the following proposition),
%
%
where, we show that whenever $A,B \subseteq \mathbb{T}$ are two connected sets such that $\gamma_o(A \triangle B )>0$, then $\mathbf{q}(A)\neq\mathbf{q}(B)$.

\begin{Proposition}\label{pro:survivalpositivemeasure}
Let us consider a BRW on a tree $\mathbb{T}$ which survives globally but not locally.
Let $A, B \subseteq \mathbb{T}$ be two connected subsets such that $\gamma_o(\partial A \triangle \partial B)>0$. Then $\mathbf{q}(A) \neq \mathbf{q}(B)$.
\end{Proposition}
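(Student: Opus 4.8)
The plan is to prove the contrapositive: assuming $\mathbf{q}(A)=\mathbf{q}(B)$ for connected subsets $A,B\subseteq\mathbb{T}$, I will show $\gamma_o(\partial A\triangle\partial B)=0$. First I would reduce to a cleaner situation using the connectedness hypothesis together with Proposition~\ref{pro:connectedequalboundary}: since $A$ is connected, $\partial A$ is a closed subset of $\mathcal{M}(P,1)$, and by Lemma~\ref{lem:boundaryA} it equals $\partial A'$ for some $A'\subseteq\mathbb{T}$ which can be taken connected; so without loss of generality I may replace $A$ by the ``canonical'' connected set having the same boundary, and likewise $B$. The point of this reduction is that for a connected set $A$ with $\partial\mathbf{T}_x\subseteq\partial A$ one in fact has $\mathbf{T}_x\subseteq A$ (the argument sketched in the paragraph before the proposition: any ray in $\partial\mathbf{T}_x$ is a limit of vertices of $A$, and connectedness forces the whole subtree in).

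Next I would exploit the hypothesis that the BRW survives globally but not locally, on a connected set, via the dichotomy established earlier. Consider the symmetric difference at the level of subtrees: I claim that if $\gamma_o(\partial A\setminus\partial B)>0$ then there is a vertex $x$ with $\partial\mathbf{T}_x\subseteq\partial A$ and $\gamma_o(\partial\mathbf{T}_x\cap\partial B)$ small — more precisely, using the countable base $\{\partial\mathbf{T}_x\}$ of the boundary and the fact that $\partial A,\partial B$ are closed, positivity of $\gamma_o(\partial A\setminus\partial B)$ lets me find some $x$ with $\partial\mathbf{T}_x\subseteq\partial A$ but $\partial\mathbf{T}_x\not\subseteq\partial B$, hence (by the reduction above) $\mathbf{T}_x\subseteq A$ while $\mathbf{T}_x\not\subseteq B$, so some vertex $y\in\mathbf{T}_x$ lies outside $B$. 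By connectedness of $B$ and the structure of trees, $B\cap\mathbf{T}_x$ is then contained in a proper sub-subtree, missing an entire $\partial\mathbf{T}_z$ for some $z\in\mathbf{T}_x$. Since $A\supseteq\mathbf{T}_z$, there is global persistence — indeed local persistence would follow from survival on $\mathbf{T}_z$ — whereas $B$ omits $\mathbf{T}_z$ entirely. The goal is to leverage this asymmetry to separate the extinction vectors $\mathbf{q}(A)$ and $\mathbf{q}(B)$ at a vertex of $\mathbf{T}_z$.

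The key step is then a monotonicity/strict-inequality argument for $\mathbf{q}(x,\cdot)$. Since $A\supseteq B\cap(\text{relevant region})$ is not literally true, I instead compare both to the reference set: I would show that for $x\in\mathbf{T}_z$, $\mathbf{q}(x,A)<1$ while... no — the hypothesis is global survival but \emph{not local}, so $\mathbf{q}(x,A)$ may well be $1$ (local extinction everywhere). The correct handle is Proposition~\ref{prop:equivalent}(iii): since $A$ is connected and $\gamma_o(\partial A)\geq\gamma_o(\partial\mathbf{T}_z)>0$, the BRW \emph{survives globally} in $A$ — but this is about global not local survival, and $\mathbf{q}(x,A)$ is a local-extinction probability. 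So the real mechanism must be: $B$ being contained, near $x$, in a subtree that misses $\partial\mathbf{T}_z$, combined with the positive harmonic measure of $\partial A\setminus\partial B$, forces $\mathbf{q}(\cdot,B)$ to be strictly larger than $\mathbf{q}(\cdot,A)$ somewhere. Concretely I would pick the reference vertex $o$ (or a vertex on the geodesic spine common to $A$ and $B$) and compare the two survival events by a coupling: the induced BRW on $B$ is dominated by the induced BRW on $A$ because $B\subseteq A$ after the reduction — wait, that inclusion need not hold.

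Let me restate the plan honestly. After the canonicalization I do \emph{not} get $B\subseteq A$; rather I get two connected sets with a symmetric difference of positive harmonic measure, and I want $\mathbf{q}(A)\neq\mathbf{q}(B)$. The cleanest route: suppose for contradiction $\mathbf{q}(A)=\mathbf{q}(B)=:\mathbf{q}$. Form $C:=A\cap B$, still connected (intersection of two connected subtrees of a tree containing a common vertex is connected), with $\partial C=\partial A\cap\partial B$ and hence $\gamma_o(\partial C)<\gamma_o(\partial A)$ since $\gamma_o(\partial A\setminus\partial B)>0$. By domination, $\mathbf{q}(x,C)\geq\mathbf{q}(x,A)$ for all $x$. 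Now I invoke Proposition~\ref{pro:connectedequalboundary} in its forward direction together with a \emph{strict} version: I must show that removing a positive-measure chunk of boundary from a connected set strictly increases the local-extinction probability at some vertex, \emph{given} that the BRW survives globally but not locally. This strictness is exactly where I expect the main difficulty. I would prove it by locating, as above, a vertex $z$ with $\mathbf{T}_z\subseteq A\setminus C$ (so $\partial\mathbf{T}_z\cap\partial C=\emptyset$), picking the vertex $z^-$ just above $z$ which lies in $C$, and arguing that for the induced BRW on $A$ started at $z^-$ there is positive probability that \emph{all} surviving infinite lines of descent pass through $\mathbf{T}_z$ — using global survival on $\mathbf{T}_z$ (Proposition~\ref{prop:equivalent}(iii) applied to the connected set $\mathbf{T}_z$, whose boundary has positive measure) — an event incompatible with survival of the induced BRW on $C$; hence $\mathbf{q}(z^-,C)>\mathbf{q}(z^-,A)=\mathbf{q}(z^-,B)=\mathbf{q}(z^-,C)$, a contradiction. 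The hard part will be making the ``all surviving lines pass through $\mathbf{T}_z$ with positive probability'' step rigorous: this needs a careful use of the tree structure (a single edge $z^-z$ separates $\mathbf{T}_z$ from the rest) and of the fact that on the complementary piece the induced BRW, having no local survival and a boundary contribution that is being thrown away, can be made to die out locally on $C\setminus\mathbf{T}_z$ with positive probability while the $\mathbf{T}_z$-branch survives globally — essentially a second-moment or generating-function estimate of the kind developed in \cite{Zucca} and \cite{BZ_genfunapproach} for $\mathcal{F}$-BRWs.
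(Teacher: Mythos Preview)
Your proposal has a genuine logical gap and an unnecessary detour. The chain $\mathbf{q}(z^-,C)>\mathbf{q}(z^-,A)=\mathbf{q}(z^-,B)=\mathbf{q}(z^-,C)$ does not close: the last equality $\mathbf{q}(z^-,B)=\mathbf{q}(z^-,C)$ is unjustified (you only know $\mathbf{q}(\cdot,B)\le \mathbf{q}(\cdot,C)$ since $C=A\cap B\subseteq B$, and you cannot appeal to a symmetric argument because you only assumed $\gamma_o(\partial A\setminus\partial B)>0$, not $\gamma_o(\partial B\setminus\partial A)>0$). So the contrapositive-through-the-intersection strategy does not yield a contradiction. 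A second, smaller issue: after canonicalisation the connected components of $A\setminus B$ need not be full subtrees $\mathbf{T}_z$; they are of the form $\mathbf{T}_z\cap A$, so ``find $z$ with $\mathbf{T}_z\subseteq A\setminus C$'' is not quite available.

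The paper avoids both problems by comparing $A$ and $B$ directly (no intersection), and by using Theorem~\ref{th:1} in place of your ``hard part''. After the same canonicalisation, assume w.l.o.g.\ $\gamma_o(\partial A\setminus\partial B)>0$ and pick a connected component $C$ of $A\setminus B$ with $\gamma_o(\partial C)>0$; on a tree such a component is separated from $B$ by a \emph{single} edge. Proposition~\ref{prop:equivalent}(iii) gives positive probability of survival in $C$; the hypothesis ``no local survival'' forces that single edge to be crossed only finitely many times a.s., and Theorem~\ref{th:1} then converts this into positive probability of survival in $C$ \emph{without ever visiting} $B$, yielding $\mathbf{q}(x,A)\le \mathbf{q}(x,C)<\mathbf{q}(x,B)$ at some $x$. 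Your ``all surviving lines pass through $\mathbf{T}_z$ with positive probability'' is precisely the content of Theorem~\ref{th:1}(iii), so no second-moment or generating-function estimates are needed.
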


\subsection{Boundaries and extinction probabilities: a complete description}\label{subsec:tosumup}

With the aid of the examples and results of Sections~\ref{subsec:nullandpersistence} and~\ref{subsec:densenosurv} we are able to look into a question raised in some recent papers (see e.g.~\cite{BZ2020,BertacchiZuccaAndFriends}), namely what conditions on the subsets $A$ and $B$ imply or are implied by $\mathbf{q}(A) = \mathbf{q}(B)$, and when one has that $\mathbf{q}(A) = \mathbf{1}$.
By looking at the approach in \cite{BZ2020} on trees, one is tempted to look for conditions on the boundaries $\partial A $ and $\partial B$.
The examples shown in this paper suggest that there is no such a connection. 
Consider the following tables. 
Here implications are denoted by an arrow; if an implication does not hold we draw a slash on the arrow.
When $A$ and $B$ are \emph{connected}, we use red, single arrows;
otherwise we use black, double arrows.
 Clearly $\textcolor{red}{\nrightarrow}$ is stronger than $\nRightarrow$ and $\Rightarrow$ is stronger than $\textcolor{red}{\rightarrow}$.
Analogously, $\Rightarrow$ (resp.~$\textcolor{red}{\rightarrow}$) on the left table are stronger that the corresponding implication on the right one and $\nRightarrow$ (resp.~$\textcolor{red}{\nrightarrow}$) on the right table are stronger than on the left one.
\medskip


\begin{center}
\begin{tikzpicture}


\node[draw,right] at (0,0) {$\mathbf{q}(A) = \mathbf{q}(B)$};

\node[draw,left] at (6.5,0) {$\partial A = \partial B$};

\node[draw] at (3.5,-2) {$\gamma_o (\partial A \triangle \partial B)=0$};


\draw[->, >=latex, red, line width=1pt] (3,0.5) to (4,0.5);

\draw[red, line width=1pt] (3.37,0.3) -- (3.63,0.7);

\node[red] at (4.2,0.5) {{\tiny{1}}};

\draw [red] (4.2,0.5) circle [radius=0.15];

\draw[->, >=latex, red, line width=1pt] (4,-0.5) to (3,-0.5);

\node[red] at (4.2,-0.5) {{\tiny{4}}};

\draw [red] (4.2,-0.5) circle [radius=0.15];

\draw[double, ->, >=latex, black, line width=1pt] (4,0) to (3,0);

\draw[black, line width=1pt] (3.37,-0.2) -- (3.63,0.2);

\node at (4.2,0) {{\tiny{3}}};

\draw (4.2,0) circle [radius=0.15];


\draw[->, >=latex, red, line width=1pt] (2.95,-1.2) to (2,-0.5);

\draw[red, line width=1pt] (2.25,-0.9) -- (2.8,-0.9);

\node[red] at (3.1,-1.35) {{\tiny{5}}};

\draw [red] (3.1,-1.35) circle [radius=0.15];

\draw[double, ->, >=latex, black, line width=1pt] (1.5,-0.5) to (2.45,-1.2);

\draw[black, line width=1pt] (1.65,-0.8) -- (2.2,-0.8);

\node at (2.6,-1.35) {{\tiny{3}}};

\draw (2.6,-1.35) circle [radius=0.15];

\draw[->, >=latex, red, line width=1pt] (1,-0.5) to (1.95,-1.2);

\node[red] at (2.1,-1.35) {{\tiny{7}}};

\draw [red] (2.1,-1.35) circle [radius=0.15];


\draw[double, ->, >=latex, black, line width=1pt] (5.25,-0.5) to (4.2,-1.2);



\draw[<-, >=latex, red, line width=1pt] (5.75,-0.5) to (4.7,-1.2);

 \draw[red, line width=1pt] (4.85,-0.9) -- (5.5,-0.9);

\node[red] at (4.55,-1.35) {{\tiny{1}}};

\draw[red] (4.55,-1.35) circle [radius=0.15];


\draw [black] (-0.5,-2.5) rectangle (14.5,1);

\draw[-] (7.25,-2.5) -- (7.25,1);


\node[draw,right] at (8,0) {$\mathbf{q}(A) = \mathbf{1}$};

\node[draw,left] at (13.75,0) {$\partial A =\emptyset$};

\node[draw] at (11,-2) {$\gamma_o (\partial A)=0$};


\draw[->, >=latex, red, line width=1pt] (10.5,0.25) to (11.5,0.25);

\draw[red, line width=1pt] (10.87,0.05) -- (11.13,0.45);

\node[red] at (11.7,0.25) {{\tiny{1}}};

\draw [red] (11.7,0.25) circle [radius=0.15];

\draw[double, <-, >=latex, black, line width=1pt] (10.5,-0.25) to (11.5,-0.25);

\node at (11.7,-0.25) {{\tiny{2}}};

\draw (11.7,-0.25) circle [radius=0.15];


\draw[->, >=latex, red, line width=1pt] (10.45,-1.2) to (9.5,-0.5);

\draw[red, line width=1pt] (9.75,-0.9) -- (10.3,-0.9);

\node[red] at (10.6,-1.35) {{\tiny{5}}};

\draw [red] (10.6,-1.35) circle [radius=0.15];

\draw[double, ->, >=latex, black, line width=1pt] (9,-0.5) to (9.95,-1.2);

\draw[black, line width=1pt] (9.15,-0.8) -- (9.7,-0.8);

\node at (10.1,-1.35) {{\tiny{3}}};

\draw (10.1,-1.35) circle [radius=0.15];

\draw[->, >=latex, red, line width=1pt] (8.5,-0.5) to (9.45,-1.2);

\node[red] at (9.6,-1.35) {{\tiny{6}}};

\draw [red] (9.6,-1.35) circle [radius=0.15];


\draw[double, ->, >=latex, black, line width=1pt] (12.75,-0.5) to (11.7,-1.2);



\draw[<-, >=latex, red, line width=1pt] (13.25,-0.5) to (12.2,-1.2);

 \draw[red, line width=1pt] (12.35,-0.9) -- (13,-0.9);

\node[red] at (12.05,-1.35) {{\tiny{1}}};

\draw[red] (12.05,-1.35) circle [radius=0.15];

\end{tikzpicture}
\end{center}
\medskip

We start by observing that the two tables are related; indeed, by taking $B=\emptyset$ (or any finite set for that matter) on the left we obtain the corresponding one on the right. 
Unless stated otherwise, all the (counter)examples are on $\mathbb{T}_d$ for $d \ge 3$ and $o$ is the root; moreover $\lambda \in (1/d, \, 1/2\sqrt{d-1}]$ as in Section~\ref{sec:boundarymeasure} in such a way that there is a.s.~extinction on every finite subset but global survival with positive probability.

Here are the (numbered) comments on the above implications.
\begin{enumerate}
 \item[{\mycirc{1}}] Consider $A=\{x=\varphi(i)\colon i \in \mathbb{N}\}$ where $\varphi$ is a ray. It is known (see for instance \cite{BZ2020}) that $\gamma_o(\partial A)=0$ nevertheless $\partial A=\{\varphi\} \neq \emptyset$.
 \item[{\mycirc{2}}] Clearly $\partial A=\emptyset$ if and only if $A$ is finite, thus in this case there is a.s.~extinction (hence no persistence).
 \item[{\mycirc{3}}] It follows from  Proposition~\ref{prop:densenopersistence} and Remark~\ref{rem:densenopersistence}. Indeed 
 take $A_\emptyset$ and $\mathbb{T}_d$: they have the same boundary but different extinction probabilities. Consider now $\emptyset$ and $A_\emptyset$: they have the same extinction probabilities but $\gamma_o(\partial \emptyset \triangle \partial A_\emptyset)=1$.
 As for the remaining counterexample (on the right hand side of the table), just take $A_\emptyset$; $\gamma_o(\partial A_\emptyset)=1$ but $\mathbf{q}(A_\emptyset)=\mathbf{1}$.
 \item[{\mycirc{4}}] See Proposition~\ref{pro:connectedequalboundary}.
 \item[{\mycirc{5}}] See Examples~\ref{exmp:GWtree}~and~\ref{exmp:pruningtree} where there is persistence w.p.p.~and not just 
 survival. See also Example~\ref{exmp:cartesian}.
  \item[{\mycirc{6}}] See Theorem~\ref{prop:equivalent} which holds for generic trees (or Proposition~\ref{pro:survivalpositivemeasure} with a finite connected subset $B$). 
   \item[{\mycirc{7}}] See Proposition~\ref{pro:survivalpositivemeasure} which holds for generic trees. 
\end{enumerate}

\section{Proofs}\label{sect:proof}


\subsection{Proof of Theorem \ref{prop:equivalent} (i) and (ii)}

Theorem \ref{prop:equivalent} (i) is a consequence of Lemma \ref{lemma:rho=phi};
Theorem \ref{prop:equivalent} (ii) follows from Lemma  \ref{lemma:rhoVSMmin}; 
Theorem \ref{prop:equivalent} (iii) is a consequence of Proposition~\ref{pro:connectedequalboundary} and its proof is in Section~\ref{sec:proofPropconnectedequalboundary}.

We start with a preparatory result.
Suppose that  the RW governed by $P$
has period $\mathbf{d}\geq 1$, where
$\mathbf{d}= \min \{k \geq 1 \ : \ p^{(k)}(x,x)>0 , \text{ for all }x \in G\}$.
Then (if $U$ is connected), denote by $\mathbf{d}_U$ the period of the Markov chain induced by $U$; $\mathbf{d}_U$ is a multiple of $\mathbf{d}$. Recall that 
$\mathcal{E}_n(U)$ was defined in \eqref{eq:En}.

\begin{Lemma}\label{lemma:step1}
Let $U$ be a connected subgraph and
$x\in U$.
Then, $\lim_n \left ( \Prw_x \left [\mathcal{E}_n(U) \right ]\right )^{1/n} $ exists and it is equal to $  \frac{\rho_U}{\phi_U}$.
\end{Lemma}

\begin{proof}
Fix any $x\in U$, choose a sequence of vertices $y_1, \ldots, y_{\mathbf{d}_U-1} \in U$ such that for each $y_j$ ($j\in \{1, \ldots , \mathbf{d}_U-1\}$) one has
$
p_U^{(j)}(x,y_j)>0
$.
Since $U$ is connected and  $U\neq\{x_0\}$ with $p(x_0,x_0)=0$,
 this is always possible.
Let $\{R_n\}_n$ denote a nearest-neighbor RW induced by the kernel $P$ on $G$.
For all $k\ge0$
\begin{equation}\label{eq:aux-periodic}
\begin{split}
& p_U^{(k\mathbf{d}_U+ j)}(x,y_j) 
 = \Prw_x^P \left [ R_{k\mathbf{d}_U+ j}=y_j \cap \mathcal{E}_{k\mathbf{d}_U+ j}(U) \right ]  \\
 & =\Prw_x^P \left [ R_{k\mathbf{d}_U+ j}=y_j\mid \mathcal{E}_{k\mathbf{d}_U+ j}(U) \right ]\Prw_x^P \left [\mathcal{E}_{k\mathbf{d}_U+ j}(U) \right ]
= q_U^{(k\mathbf{d}_U+ j)}(x,y_j)\Prw_x^P \left [\mathcal{E}_{k\mathbf{d}_U+ j}(U) \right ].
\end{split}
\end{equation}
Moreover, 
\[
\begin{split}
\rho_U & = \limsup_n \left ( p_U^{(n  )}(x,x)\right )^{1/n} =\lim_{k} \left ( p_U^{(k\mathbf{d}_U )}(x,x) \right )^{1/(k\mathbf{d}_U)} 
\\
& 
=  \limsup_n \left ( p_U^{(n  )}(x,y_j)\right )^{1/n} = \lim_k \left ( p_U^{(k\mathbf{d}_U+ j)}(x,y_j)  \right )^{1/(k\mathbf{d}_U+j)},
\end{split}
\]
and a similar reasoning can be done for $\phi_U$.
Thus, by taking the $(k\mathbf{d}_U+j)$-th root in \eqref{eq:aux-periodic} and then the limit on $k\to \infty$, by \eqref{eq:phi_U} and \eqref{eq:defRhoU}, the above calculation gives the claim. 
\end{proof}
Recall the process $\{\mathbb{U}_n^x\}_n$  defined in Section \ref{sect:constr-propBZ}.

\begin{Lemma}\label{lemma:step2}
Let $U$ be a connected subgraph and
$x\in U$, then
\[
\lim_n \left[ \Ebrw [\sum_{w\in U}\mathbb{U}_{n}^x(w)] \right ]^{1/n} = \m \cdot \frac{\rho_U}{\phi_U}.
\]
\end{Lemma}
\begin{proof}
For all $w\in U$, by \eqref{eq:expectedBZn} and by the construction in Section \ref{sect:constr-propBZ} we have
\[
\Ebrw \left [ \mathbb{U}_1^x(w)  \right ] = \sum_{f_x \in S_U} f_x(w) \mu_x (f_x) = \m \cdot p_U(x,w).
\]
Hence, 
\[
\Ebrw \Big [\sum_{w\in U}\mathbb{U}_{1}^x(w) \Big ] = \sum_{w\in U} m_{x,w} =  \sum_{w\in U}  \m \cdot p_U(x,w) =\m \cdot \Prw_x \left ( \mathcal{E}_1(U)\right ),
\]
and repeating the same reasoning for all $n\geq 1$ we obtain
$
\Ebrw [\sum_{w\in U}\mathbb{U}_{n}^x(w)]=\m^n \cdot \Prw_x \bigl [\mathcal{E}_n(U) \bigr ]
$.
By taking the $n$-th root and limits on both sides,  Lemma \ref{lemma:step1} gives the claim.
\end{proof}

\begin{Lemma}\label{lemma:rho=phi}
Let $U$ be a connected subgraph and
$x\in U$.
If for all $\m\in (1, \rho_G^{-1}]$ there is positive probability of global persistence 
on $U$,
then $\rho_U=\phi_U$. 
Furthermore, if $\{\mathbb{U}_n^x\}_n$ is 
an $\mathcal{F}$-BRW, then also vice versa holds.
\end{Lemma}
\begin{proof}
Suppose that, for all $\m\in (1, \rho_G^{-1}]$,
$\{\B_n\}_n$
persists globally on $U$.
Then necessarily, by \cite[Theorem 4.1]{Zucca}, by Definition \ref{def:global-local} and Lemma \ref{lemma:step2} we have  
\[
\lim_n \Big (\Ebrw \Big [\sum_{w\in U}\mathbb{U}_{n}^x(w) \Big ]\Big )^{1/n} =\m \cdot \frac{\rho_U}{\phi_U} \geq 1.
\]
By assumption, the above must hold for every (fixed) $\m>1$, hence we can consider 
\[
\lim_{\m \to 1^+}\left [  \m \cdot \frac{\rho_U}{\phi_U}\right ] \geq 1,
\]
which can be satisfied only if $  \frac{\rho_U}{\phi_U}=1$.

Suppose now that $\rho_U=\phi_U$ which, by Lemma \ref{lemma:step1} is equivalent to  $\lim_n \left ( \Prw_x \bigl [\mathcal{E}_n (U)\bigr ]\right )^{1/n}=1$.
We are assuming that $\m>1$, then from Lemma \ref{lemma:step2} it follows
%
$\lim_n \left (\Ebrw [ \sum_{w\in U}\mathbb{U}_{n}^x(w)]  \right )^{1/n}  = \m>1$.
If $\{\mathbb{U}^x_n\}_n$ is 
an $\mathcal{F}$-BRW, by \cite[Theorem 4.3]{Zucca} one has global persistence of  $\{\mathbb{U}^x_n\}_n$
with positive probability.
Thus the claim.
\end{proof}

\begin{Remark}\label{rem:rho=phi}
 It is well known that, when
 the offspring distribution $\nu$ is equal at all sites, then
 $\rho_G$ is related to the local behavior of the BRW; local survival being equivalent to $\m>\rho_G^{-1}$, see for instance \cite[Proposition 4.33]{Bertacchi-Zucca-book}. 
 Given a fixed $\m$,
 when $\rho_U=\phi_U$, then
 two BRWs, the one governed by $P_U$ and the one governed by $Q_U$, either survive locally or they
 both die out. 
 It is natural to ask whether $\rho_U=\phi_U$ implies $\rho_U=\rho_G$. The answer is negative, even assuming positive probability of global persistence
 for all $\m\in(1,\rho_G^{-1}]$, as shown by the following example.
 
 Let $G$ be the union of the homogeneous tree $\mathbb{T}_d$ and an additional singleton $\{w_0\}$ ($w_0 \not \in \mathbb{T}_d$) where $w_0$ is connected only to the root $o \in \mathbb{T}_d$. Consider the following transition probability matrix
  \[
  p(x,y):=
  \begin{cases}
    p & \text{if } x=o,\, y=w_0\\
    1 & \text{if } x=w_0,\, y=o\\
   (1-p)/d & \text{if } x=o,\, y \sim o\\
   1/d & \text{if } x\not \in \{o, w_0\},\, y \sim x\\
  \end{cases}
 \]
where $p \in (0,1)$ and $x \sim y$ means that $x$ and $y$ are neighbors.
Let $U:=\mathbb{T}_d \subseteq G$. Denote by $Y_n(x)$ the ball of radius $n$ centered at $x$ in the natural metric of $G$. According to \cite[Theorem 5.1]{Zucca} the spectral radius of $P$ (resp.~$P_U$) is the limit, from below,  of the spectral radii of $P_{Y_n(x)}$ (resp.~$P_{Y_n(x) \cap U})$.  
Clearly $\rho_G \ge \sqrt{p}$ since $p^{(2n)}(o,o)\ge (p(o,w_0)p(w_0,o))^n \ge p^n$. Moreover $\rho_U \le 2\sqrt{d-1}/d$ since 
$P_U$ is bounded by the transition probability matrix of the simple RW on $\mathbb{T}_d$. 
 On the other hand, if $\{x_n\}$ is a sequence of vertices in $U$ such that $d(x_n,o) > n$ then 
 $\rho_U \ge \rho_{U \cap Y_n(x_n)}=\rho_{Y_n(x_n)}\uparrow 2\sqrt{d-1}/d$ (the last limit comes from \cite[Theorem 5.1]{Zucca} and the transitivity of the simple RW on $\mathbb{T}_d$. Whence 
 $\rho_U=2\sqrt{d-1}/d$.
 
It is not difficult to see that $Q_U$ is the transition matrix of the simple RW on $\mathbb{T}_d$, thus $\rho_U=\phi_U$. However $\rho_G>\rho_U$ if $p > 4(d-1)/d^2$.

Note that there is positive probability of global persistence in $U$ for all $\textbf{m} \in (1, \rho_G^{-1}]$ (this does not depend on the starting vertex); indeed, 
let us consider two BRWs: the one governed by $P$ with $p=0$ (BRW$_1$)
and the one governed by $P_U$ (BRW$_2$).
According to \cite[Theorem 2.4]{BZ2020}, the probability of never visiting $o$ (starting from $x \neq w_0$) is the same for both BRWs. Moreover, if $p=0$  then BRW$_1$ is essentially the BRW on $\mathbb{T}_d$ governed by the transition probability matrix of the simple RW and, by our choice of $\m$, $\mathbf{1}=\mathbf{q}^1(\{o, w_0\})>\mathbf{q}^1(G)$
(we denote by $\mathbf q^i$ the extinction probabilities of BRW$_i$).
By \cite[Theorem 2.4]{BZ2020} (take $A=\{o, w_0\}$ and $B=G$) for BRW$_2$ we have  $\mathbf{q}^2(\{o, w_0\})>\mathbf{q}^2(G)$, whence 
there is a positive probability of survival in $U$ starting from $x$ without visiting $o$ and this implies positive probability of persistence in $U$ (because there is no way to leave $U$ without visiting $o$).
\end{Remark}

Recall that $\gamma_o$ is the harmonic 
measure on $\M(P,1)$ induced by the RW $\{Y_n\}_n$ governed by $P$:
namely, $\gamma_o(\cdot)=\Prw^P_o(Y_\infty \in \cdot)$ where $Y_\infty:=\lim_n Y_n$.
Recall the definition of survival from Definition \ref{def:survival-U}. 

The next result gives a sufficient condition for survival (resp.~persistence) of a BRW in a neighborhood of a Borel subset of the boundary.

\begin{Lemma}\label{lemma:rhoVSMmin}
Consider a Borel set $A\subseteq \M(P,1)$ so that $\Prw^P_o(Y_\infty \in A)>0 $, and a fixed neighborhood $U$ of $A$.
Then for all $\m>1$,
$
\{\mathbf{B}_n\}_n \text{ survives in }U
$ starting from every $x \in G$.
Moreover, if $U$ is connected, then
$
\{\mathbf{B}_n\}_n \text{ persists in }U
$ starting from every $x \in U$.
\end{Lemma}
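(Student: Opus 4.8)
The plan is to distil the hypothesis into a single analytic object — a strictly positive $P_U$-harmonic function — and then to run a weighted additive martingale for the BRW induced by $U$, after which the transport to an arbitrary starting vertex is a soft branching-property argument. First I would move the hypothesis to an arbitrary base point and convert it into a statement about staying in $U$. Writing $\gamma_x(\cdot):=\Prw^P_x(Y_\infty\in\cdot)$ for the exit distribution, one has $d\gamma_x/d\gamma_o=K(x,\cdot)$, which is strictly positive $\gamma_o$-a.e., so $\gamma_x$ and $\gamma_o$ are equivalent and $\gamma_o(A)>0$ forces $\gamma_x(A)>0$ for every $x$. Since $U$ is a neighborhood of $A$, the interior of $\widehat U$ in the Martin compactification $G\cup\M(P,1)$ is an open set containing $A$ whose trace on $G$ lies in $U$; because $Y_n\to Y_\infty$ in the compactification, on $\{Y_\infty\in A\}$ one has $Y_n\in U$ for all large $n$. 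Hence $\Prw^P_x(\exists N:\ Y_n\in U\ \forall n\ge N)>0$ for all $x$, and a Markov-property decomposition then yields a vertex $y\in U$ with
\[
h(y):=\Prw^P_y\bigl(Y_n\in U\ \text{for all }n\ge 0\bigr)>0 .
\]
A one-step decomposition shows $P_Uh=h$, while $0\le h\le 1$; by irreducibility $h>0$ throughout the connected component of $y$ in $U$ (all of $U$ when $U$ is connected), and the induced BRW started at $y$ stays in that component.

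The heart of the proof is an additive martingale for the induced BRW $\{\mathbb U^y_n\}_n$ started at $y$. Denoting by $X_v\in U$ the position of a particle $v$ of this process, set $W_n:=\sum_{v\ \text{alive at }n}h(X_v)$, so $W_0=h(y)>0$. By the offspring computation in the proof of Lemma \ref{lemma:step2} the mean matrix of the induced BRW is $M=\m P_U$, hence with respect to the natural filtration $(\mathcal F_n)_n$,
\[
\Ebrw\bigl[W_{n+1}\mid\mathcal F_n\bigr]=\sum_{v\ \text{alive at }n}\sum_{w\in U}m_{X_v,w}\,h(w)=\m\sum_{v\ \text{alive at }n}(P_Uh)(X_v)=\m\,W_n ,
\]
so $(\m^{-n}W_n)_n$ is a nonnegative martingale and converges a.s.\ to some $W_\infty\ge 0$. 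The decisive point is that $W_\infty\not\equiv 0$, equivalently that $(\m^{-n}W_n)_n$ is uniformly integrable: this is a Kesten–Stigum-type statement for the induced BRW, and it is exactly here that the strengthened moment hypothesis $\Ebrw[L^2\log L]<\infty$ is used, since the induced BRW has in general infinitely many types and the eigenfunction $h$ is not bounded below. Granting this, $\Ebrw[W_\infty]=h(y)>0$, so $\Pbrw_y(W_\infty>0)>0$. On $\{W_\infty>0\}$ the induced BRW never dies out: if $\mathbb U^y_N=0$ for some $N$ then $W_m=0$ for all $m\ge N$ and $W_\infty=0$, while conversely $W_n>0$ whenever particles are alive because $h>0$ on $U$. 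Thus $\Pbrw_y(\mathbb U^y_n>0\ \forall n)>0$; since in addition $\mathbb U^y_n\ge W_n$ grows like $\m^n$ on this event, infinitely many particles visit $U$.

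It remains to transport the conclusion from $y$ to an arbitrary $x\in U$. For the survival statement, fix by irreducibility of $P$ a nearest-neighbor path of positive-probability edges from $x$ to $y$; with positive probability a single lineage of the BRW started at $x$ follows it (at each step the relevant particle produces at least one child at the next vertex), and by the branching property the descendant subtree of the particle reaching $y$ is an independent BRW started at $y$, which by the previous paragraph has infinitely many particles in $U$ with positive probability — hence so does the BRW started at $x$. (Alternatively, for this part one can invoke Theorem \ref{thm:notes}: $\Ebrw_x\mathbf W(A)=\gamma_x(A)>0$ gives $\mathbf W(A)>0$ with positive probability, and the portmanteau theorem applied to an open set trapped between $A$ and $U$ forces $\sum_{z\in U}\B_n(z)\to\infty$ there.) For the persistence statement, when $U$ is connected the path from $x$ to $y$ may be chosen inside $U$, so along the routed lineage some particle lies in $U$ at every time up to the hitting time $T$ of $y$, after which the previous paragraph keeps $U$ nonempty for all times $\ge T$; these two events are independent given the configuration at time $T$, so their joint probability is positive and the BRW started at $x$ persists globally in $U$ with positive probability. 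The one genuinely hard point is the uniform integrability of $(\m^{-n}W_n)_n$; everything else is routine.
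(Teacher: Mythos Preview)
Your argument has a genuine gap exactly where you flag it: the uniform integrability of $(\m^{-n}W_n)_n$. What you would need is a Biggins/Kesten--Stigum condition for the $h$-weighted offspring, roughly a bound, uniform in $x\in U$, on $\Ebrw\bigl[Z_x\log^+ Z_x\bigr]$ with $Z_x=\m^{-1}\sum_{i:Y_i\in U}h(Y_i)/h(x)$. Since $h$ is bounded above by $1$ but may be arbitrarily small near the inner boundary of $U$, the ratios $h(Y_i)/h(x)$ are not controlled, and the $L^2\log L$ hypothesis on $\nu$ does not obviously deliver this. (In the paper that hypothesis is used only for Lemma~\ref{lemma:ZlogZ}, where $U$ is transitive and the induced BRW has a single type; it is not invoked for the present lemma.) Without this step your persistence argument does not close, and repairing it would require either substantial extra work on infinite-type additive martingales or an assumption such as $\inf_{x\in U}h(x)>0$, which is not available in general.

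The paper sidesteps the issue entirely. For survival it uses precisely the alternative you mention parenthetically: Theorem~\ref{thm:notes} gives $\Ebrw_o\mathbf W(A)=\gamma_o(A)>0$, and a Urysohn function $f$ with $f\equiv1$ on $A$ and $f\equiv0$ on $U^\complement$ turns weak convergence of $\m^{-n}\B_n$ into $\sum_{z\in U}\B_n(z)>0$ eventually, with positive probability. For persistence the argument is strictly more elementary than any martingale: condition on the genealogical tree (offspring numbers only), fix one infinite line of descent, and observe that the positions along it form a $P$-random walk; since $\gamma_o(A)>0$, with positive probability this line converges to $A$ and hence eventually stays in $U$. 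That already gives persistence from some vertex of $U$, and connectedness of $U$ transports it to every $x\in U$ by the routing argument you also use. No moment condition beyond $\m>1$ and no uniform integrability are needed.
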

\begin{proof}
According to \cite[5.34 Fact $\#$4]{Woess-denumerable}, if we fix an infinite genealogy tree of the BRW (that is, the number of children of each descendant, disregarding their position) and one infinite line of descent, then the (random) sequence of positions of the particles along this line 
is a RW with transition matrix $P$. Then, by definition of $\gamma_o$, the probability of convergence to $A$, conditioned on this fixed infinite genealogy tree is positive. Since there is a positive probability of global survival, that is, a positive probability of an infinite genealogy tree, then there is a positive probability that a line of descent of the BRW converges to $A$. Then the line of descent 
eventually stays  in $U$ with positive probability and this implies survival in $U$.
In order to have persistence, we must prove that, with positive probability, there is at least one line of descent which stays in $U$ from the start.
To this aim, suppose that $U$ is connected and fix $x \in U$.
Consider all the above lines of descendants and the (countable) set $V_U$ of vertices where they last enter $U$; each one of these vertices can be reached, with positive probability, by a path starting from $x$ and never exiting $U$ (since $U$ is connected). There is at least one vertex in $V_U$ such that, with positive probability, the RW does not exit $U$ and this vertex can be reached with positive probability from $x$ without leaving $U$.
This implies positive probability of persistence from any starting vertex $x \in U$.
\end{proof}

\subsection{Proof of Theorem \ref{thm:local-surv} and well-posedness of \eqref{eq:defm1} }\label{sect:proof-thm}

We start with an easy auxiliary result.
\begin{Lemma}\label{lemma:step3}
Fix a connected subgraph $U$, and
let $x\in U$ be a fixed vertex.
If the induced process $\{\mathbb{U}^x_n\}_n$ is 
an $\mathcal{F}$-BRW, then
$
\m_1= \frac{\phi_U}{\rho_U}
$.
Otherwise, $\m_1\geq \frac{\phi_U}{\rho_U}$.
\end{Lemma}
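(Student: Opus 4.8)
The plan is to read the mean growth rate of the induced Galton-Watson process off the two preparatory lemmas and then feed it into the general branching criterion of \cite{Zucca}. First I would record that, by \eqref{eq:En}, the quantity $\Prw_x[\mathcal{E}_n(U)]=\sum_{y\in U}p_U^{(n)}(x,y)$ is precisely the probability that a $P$-random walk started at $x$ makes its first $n$ steps inside $U$, so Lemma \ref{lemma:step1} yields
\[
\lim_n\bigl(\Prw_x[\mathcal{E}_n(U)]\bigr)^{1/n}=\frac{\rho_U}{\phi_U}.
\]
Since $q_U(x,y)\ge p_U(x,y)$ for all $x,y\in U$ (by the construction of $Q_U$) we have $q_U^{(n)}(x,x)\ge p_U^{(n)}(x,x)$ for every $n$, hence $\phi_U\ge\rho_U$ and $\rho_U/\phi_U\in(0,1]$. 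Combining this with Lemma \ref{lemma:step2}, the multi-type Galton-Watson process $\{\mathbb{U}^x_n\}_n$ induced by $U$ has mean growth rate
\[
\lim_n\bigl(\Ebrw[\mathbb{U}^x_n]\bigr)^{1/n}=\m\cdot\frac{\rho_U}{\phi_U}.
\]

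For the general inequality $\m_1\ge\phi_U/\rho_U$ I would argue as follows. Suppose $\m>1$ is such that $\Pbrw_x\bigl(\{\B_n\}_n\text{ persists globally in }U\bigr)>0$; then $\{\mathbb{U}^x_n\}_n$ survives globally with positive probability, so by \cite[Theorem 4.1]{Zucca} (a multi-type branching process whose expected total population decays exponentially dies out almost surely) one must have $\m\cdot\rho_U/\phi_U\ge1$, i.e.\ $\m\ge\phi_U/\rho_U$. Since this bounds below every element of the set defining $\m_1$ in \eqref{eq:defm1}, taking the infimum gives $\m_1\ge\phi_U/\rho_U$.

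For the equality in the $\mathcal{F}$-BRW case I would add the matching upper bound: if the induced BRW is an $\mathcal{F}$-BRW, then by \cite[Theorem 4.3]{Zucca} (equivalently, by Proposition \ref{prop:monotonicity}, whose threshold $1/\liminf_n\sqrt[n]{\sum_{y\in U}p_U^{(n)}(x,y)}$ equals $\phi_U/\rho_U$ by the first display above) global persistence in $U$ occurs with positive probability as soon as the mean growth rate $\m\cdot\rho_U/\phi_U$ exceeds $1$, that is, for every $\m>\phi_U/\rho_U$; hence $\m_1\le\phi_U/\rho_U$, and with the previous step $\m_1=\phi_U/\rho_U$. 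The argument is short modulo Lemmas \ref{lemma:step1} and \ref{lemma:step2}; the points that need care are (a) applying the criteria of \cite{Zucca} to the expected \emph{total} population of the induced \emph{multi-type} process $\{\mathbb{U}^x_n\}_n$, whose rate is $\m\,\rho_U/\phi_U$, rather than to the single-site occupation numbers $\mathbb{U}_n(x)$ (whose exponential rate involves $\phi_U$), and (b) the fact that for a general BRW only the inequality survives, since \cite[Theorem 4.1]{Zucca} admits no converse without the $\mathcal{F}$-BRW hypothesis --- precisely the gap that Proposition \ref{prop:monotonicity} and Corollary \ref{cor:m1+} fill.
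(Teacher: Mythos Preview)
Your proof is correct and follows essentially the same route as the paper: compute the exponential growth rate of $\Ebrw[\mathbb{U}_n^x]$ via Lemmas \ref{lemma:step1} and \ref{lemma:step2}, invoke \cite[Theorem 4.1]{Zucca} for the lower bound $\m_1\ge\phi_U/\rho_U$, and then \cite[Theorem 4.3]{Zucca} under the $\mathcal{F}$-BRW hypothesis for the matching upper bound. Your additional remark that $\phi_U\ge\rho_U$ and your explicit discussion of points (a) and (b) are helpful clarifications but not present in the paper's more terse argument.
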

\begin{proof}
By Lemmas \ref{lemma:step1} and \ref{lemma:step2} we get that for every starting vertex $x\in U$
\[
\lim_n \Big (\Ebrw \Big [ \sum_{w\in U}\mathbb{U}_{n}^x(w) \Big ] \Big )^{1/n} = \m \cdot \liminf_n \sqrt[n]{\Prw_x \bigl [\mathcal{E}_n(U) \bigr ]}  = \m \cdot \frac{\rho_U}{\phi_U}.
\]
Thus, the condition $\m \cdot \frac{\rho_U}{\phi_U}<1$ ensures (by \cite[Theorem 4.1]{Zucca}) a.s.~global extinction of $\{\mathbb{U}_n\}_n$.
In other words, $\{\B_n\}_n $ started at $x$ will exit $U$ almost surely.
Hence $\m_1\geq \frac{\phi_U}{\rho_U}$.
Note that this fact does not imply that for $\m<\m_1$ the set $U$ is only visited finitely many times by $\{\B_n\}_n$: there might be trajectories of $\{\B_n\}_n$ that enter and exit $U$ infinitely many times.
If, in addition, the auxiliary process $\{\mathbb{U}_n^x\}_n$ is 
an $\mathcal{F}$-BRW, by \cite[Theorem 4.3]{Zucca}, condition 
$
\m \cdot \frac{\rho_U}{\phi_U}>1
$
implies global persistence of $\{\mathbb{U}_n^x\}_n$
with positive probability, 
hence $\m_1\leq \frac{\phi_U}{\rho_U}$.
\end{proof}

We are now able to conclude the proof of the theorem.

\begin{proof}[Proof of Theorem \ref{thm:local-surv}]
Lemma \ref{lemma:step3} already proves the first inequality; in order to prove the second one we employ \cite[Theorem 4.1]{Zucca}.
More precisely, we shall prove that the BRW started at $x$ and killed upon exiting $U$ persists locally in $U$ 
if and only if $\m>\rho_U^{-1}$.

We proceed as follows.
For any fixed vertex $w\in U$, consider the auxiliary BRW $ \{\mathbb{U}_n^x( w) \}_n$. 
(Note that we are \emph{not} summing over $w$ here).
In particular we shall be interested in $ \{\mathbb{U}_n^x(x) \}_n$, with initial condition $\mathbb{U}_0^x(w)=\delta_x(w)$.
That is, we shall investigate the set of particles of the original BRW on $G$ such that there are descendants of the initial particle at $x$, that visit $x$ at some time $n\geq 1$, and whose family line has never exited $U$.
Now, by \cite[Theorem 4.1]{Zucca} local persistence occurs 
with positive probability 
if and only if
\[
\limsup_n \left ( \Ebrw [\mathbb{U}_n^x(x)]\right )^{1/n}>1.
\]
By construction, we have that $\Ebrw [\mathbb{U}^x_n(x)] = \m^n \cdot p_U^{(n)}(x,x)$,
then the above corresponds to 
\[
\limsup_n \left ( \Ebrw  [\mathbb{U}_n^x( x)]\right )^{1/n} = \limsup_n \left ( \m^n p_U^{(n)}(x,x)\right )^{1/n} = \m \, \rho_U>1,
\]
which implies the statement.
\end{proof}

At this point, the proof of the subsequent corollary is an easy fact.

\begin{proof}[Proof of Corollary \ref{cor:m1+}]
The first part is a direct consequence of Lemma \ref{lemma:step3}.
Furthermore, since local persistence implies global persistence, the second part follows directly from Lemma \ref{lemma:step3} and Theorem \ref{thm:local-surv}.
\end{proof}

We observe that, in the above result, the possibility $\phi_U=1$ is completely ruled out if we are seeking global but not local persistence in $U$.
%
%
Now we show under which conditions the quantity $\m_1$ defined in \eqref{eq:defm1} is actually a threshold.

\begin{Lemma}\label{pro:m1}
 Let $X$ be a generic set and $P'$ a substochastic matrix on $X$.
Given  an offspring distribution $\nu$ with expected value $\m$,  consider the modification of the usual BRW where each offspring of a particle at $x$ is placed at $y$ (resp.~killed) with probability $p'(x,y)$ (resp.~$1-\sum_{y \in X} p'(x,y)$). Then there exists $\m_{P'} \in [1, +\infty)$ such that $\m >\m_{P'}$ implies global survival w.p.p., while $\m < \m_{P'}$ implies a.s.~global extinction.
In particular, for the BRW induced by $U \subseteq X$, $\m >\m_1(U)$ implies 
persistence w.p.p., while $\m <\m_1(U)$ implies a null probability of persistence.
\end{Lemma}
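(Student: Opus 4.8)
The plan is to reduce the statement to a single fact: for the branching process described in the lemma (a branching random walk on $X$ whose offspring are redistributed according to $P'$, the missing mass corresponding to particles that are killed), whether global survival occurs with positive probability depends on the offspring distribution \emph{only through its mean $\m$}, and is monotone in $\m$. Granting this, define $f(\m)\in\{0,1\}$ to be the common value of the indicator ``the process with some (hence every) offspring distribution of mean $\m$ survives globally with positive probability'', and set $\m_{P'}:=\inf\{\m\ge 1:f(\m)=1\}$ (with $\inf\emptyset=+\infty$; in the cases of interest, e.g.\ when $U$ is transitive, $f(\m)=1$ for all large $\m$, so $\m_{P'}<\infty$ as claimed). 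If $\m<\m_{P'}$ then $\m\notin\{m:f(m)=1\}$, so no offspring law of mean $\m$ survives and in particular the given one goes globally extinct almost surely; if $\m>\m_{P'}$ then some $\m'<\m$ has $f(\m')=1$, and monotonicity gives $f(\m)=1$, i.e.\ \emph{every} offspring law of mean $\m$ — in particular the given one — survives with positive probability. Finally, applying this with $(X,P')$ replaced by $(U,P'_U)$, where $P'_U$ is the restriction of $P'$ to $U\times U$, the BRW induced by $U$ is exactly a process of this form, so that $\m_1(U)$ from \eqref{eq:defm1} coincides with $\m_{P'_U}$ and inherits the threshold property.

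It remains to justify the reduction, which I would do in two independent steps. \textbf{Monotone coupling.} Suppose the process with offspring law $\nu$ of mean $\m$ survives globally with positive probability, and fix $\m'>\m$. Let $\eta$ be any offspring law of mean $\m'-\m$, and let $\nu'$ be the law of $L+K$ with $L\sim\nu$, $K\sim\eta$ independent, so $\nu'$ has mean $\m'$. Run the $\nu'$-process so that each particle draws $(L,K)$, is replaced by $L+K$ newborns placed independently according to $P'$ (with killing), and call a newborn \emph{marked} if it is among the first $L$ of its siblings and all of its ancestors were marked. The marked particles form a sub-collection of the $\nu'$-process distributed exactly as the $\nu$-process; since the latter survives globally with positive probability, so does the former. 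Hence whenever $f$ is realised by \emph{some} law of mean $\m$, it is realised by \emph{some} law of every mean $\m'>\m$.

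\textbf{Independence from higher moments.} This is where the result of \cite{Hutchcroft2020Branching} is used, and it is the only non-elementary input. Adjoining to $X$ an absorbing cemetery $\partial$ that receives the killed particles (and has no offspring) turns the process of the lemma into an honest branching random walk on $X\cup\{\partial\}$, and global survival in $X$ is precisely the event that this BRW has a particle in $X$ at arbitrarily large times; as a branching process cannot repopulate after reaching $0$, this is the event that $X$ is visited infinitely often. By \cite{Hutchcroft2020Branching}, for a fixed transition kernel whether a branching random walk visits a prescribed set infinitely often with positive probability depends on the offspring distribution only through its mean, with no moment hypothesis required. Consequently all offspring laws of a given mean are equivalent for this purpose, so $f(\m)$ is well defined; combined with the previous step it is nondecreasing in $\m$, which is exactly what the reduction needed.

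The main obstacle is this second step. For the BRW induced by $U$ — a multi-type branching process with \emph{infinitely many} types — global survival is genuinely not determined by the mean operator in general (this is the gap between $\mathcal F$-BRWs and arbitrary induced BRWs, cf.\ \cite{Zucca}), so the elementary coupling alone would only yield ``\emph{some} offspring law of each larger mean survives'', not the statement for a prescribed $\nu$. The theorem of \cite{Hutchcroft2020Branching} is precisely what promotes ``some'' to ``every'' and makes $\m_1(U)$ a genuine threshold.
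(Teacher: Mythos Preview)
Your approach is essentially the paper's: both reduce to \cite[Corollary~1.2]{Hutchcroft2020Branching} by extending the substochastic kernel $P'$ to a stochastic one via an added cemetery, so that global survival of the killed process in $X$ becomes local survival ($\nu$-recurrence) of an honest BRW in the set $X$. The paper carries this out on $G\times\{0,1\}$, sending the missing mass at $(x,0)$ to $(x,1)$ and making the layer $\{1\}$ absorbing; then it simply invokes Hutchcroft's corollary for the set $G\times\{0\}$.

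One point to fix in your version: the cemetery must be \emph{absorbing in the random-walk sense} (particles there still reproduce according to $\nu$, but every offspring stays at $\partial$), not a place where particles ``have no offspring''. With your formulation the offspring law is $\nu$ on $X$ and $\delta_0$ at $\partial$, so the process is no longer a BRW with a single offspring distribution, and Hutchcroft's theorem does not apply verbatim. Of course the event ``$X$ is visited infinitely often'' is unaffected by what happens at $\partial$, so the repair is trivial; just make $\partial$ a loop with offspring law $\nu$, exactly as the paper does. Once this is in place your monotone-coupling step is redundant: Hutchcroft's corollary already delivers a threshold in $\m$ for $\nu$-recurrence of a fixed set, not merely ``independence from higher moments'', so the existence of $\m_{P'}$ and the two implications follow immediately without the auxiliary $\nu*\eta$ argument.
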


\begin{proof} 
 Consider the BRW on $G \times \{0,1\}$ with offspring distribution $\nu$ and stochastic matrix $P$ defined as 
 \[
  p((x,i)\,,(y,j)):=
  \begin{cases}
   p'(x,y) & \text{if } i=j=0\\
   1-\sum_{w \in G} p'(x,w) & \text{if } x=y, \, i=0, \, j=1\\
   1 & \text{if } x=y, \, i=j=1\\
   0 & \text{otherwise.}
  \end{cases}
 \]
 It is clear that global extinction of 
 $\{\B_n\}_n$ 
 is equivalent to extinction of the new BRW in the set $G \times\{0\}$ or, by using the terminology of \cite{Hutchcroft2020Branching}, to $\nu$-transience of the new BRW in $G \times\{0\}$.
 The result follows from
 \cite[Corollary 1.2]{Hutchcroft2020Branching}.
 
 Consider now the BRW induced by $U \subseteq G$. It is clear that $P'_U$ is a substochastic matrix, hence the above arguments apply to the induced BRW as well.
\end{proof}

\subsection{Proof of Theorem \ref{thm:general-bz-bdary} }\label{sect:proof-bz}

The auxiliary process $\{\mathbb{U}_n^x\}_n $ is, in general, a BRW with infinitely many types.
BRWs with countably many types are yet not well understood, and only recently Hautphenne et al., and Bertacchi et al.\ (see e.g.\  \cite{HautphenneEtAl} and  \cite{BertacchiZuccaAndFriends} and references therein) managed to shed some light into such a difficult topic.
However, for 
$\mathcal{F}$-BRWs, one can prove several useful results.
A particular case that we are interested in, is when $U$ is 
such that $\sum_{y \in U} p(x,y)=\bz$ does not depend on $x \in U$, which means that, the global behavior of the BRW induced by $U$ is the same as  the global behavior of a BRW with just one type.
Then, for all vertices $x\in U$ the following quantity
$
\widetilde{m}_{x}:=\Ebrw \left [\sum_{w \in U}\mathbb{U}_{1}^x(w) \right ] =  \m \sum_{y\in U }p_U(x,y)
$
does not depend on $x$, that is $\widetilde{m}_{x}=\m \bz$ for all $x$. As an example, consider the case when $P_U$ is transitive.
We are now able to prove Theorem \ref{thm:general-bz-bdary}.

\begin{proof}[Proof of Theorem \ref{thm:general-bz-bdary}]
Proof of (i).
 By hypothesis we have that $\sum_{y \in U} p(x,y)=\bz$ and 
 $
\Prw_x(\mathcal{E}_j) = \bz^j
$ 
for all $x \in U$ and for all $j\geq 1$.
As a consequence, we have that, for all $x,y\in U$
\[
\begin{split}
G_{P_U}(x, y \mid \bz) 
& =  \sum_{j\geq 0} p_U^{(j)}(x, y)\frac{1}{\bz^j} =  \sum_{j\geq 0} \frac{p_U^{(j)}(x, y)}{\Prw_x(\mathcal{E}_j)}
=  \sum_{j\geq 0} q_U^{(j)}(x, y)= G_{Q_U} (x, y\mid 1).
\end{split}
\]
Hence, letting $o_U$ be a reference vertex in $U$,
\begin{equation}\label{eq:equal-kernels}
K_{P_U}(x,y \mid \bz) = \frac{G_{P_U}(x, y \mid \bz) }{G_{P_U}(o_U, y \mid \bz) } = \frac{G_{Q_U} (x, y\mid 1)}{G_{Q_U} (o_U, y\mid 1)} = K_{Q_U}(x,y\mid 1).
\end{equation}
Now, consider the functions $w_x(y):=K_{P_U}(x,y \mid \bz)$. Each of them can be extended to a unique continuous function, $\bar w_x$ on 
$\M(P_U, \bz)$.
Namely, for each sequence $\{y_n\}_n\subseteq U$ such that $y_n\to \xi \in  \M(P_U, \bz)$, 
we define
$\bar w_x(\xi):=\lim_n K_{P_U}(x,y_n \mid \bz)
$.
Then, by \eqref{eq:equal-kernels} we also have that
$
w_x(\xi)=\lim_n  K_{Q_U}(x,y_n\mid 1)
$.
In conclusion, since the above is true for all $x\in U$, then (by uniqueness of the Martin compactification), one has
\[
\M(P_U, \bz) 
\subseteq \M(Q_U, 1).
\]
The inverse inclusion can be proven in the same way.
Notice that we are not allowed to include the element $\eth_U$ in $\M(Q_U, 1)$, because a RW governed by $Q_U$ does not exit $U$, hence it will not get ``killed''.

Proof of (ii).
Here we use the machinery developed in Section \ref{sect:constr-propBZ} together with Theorem \ref{thm:notes}.
Since $P_U$ is transitive then $\sum_{y \in U} p(x,y)$ is constant on $U$ and $\{\mathbb{U}_n\}_n$ is 
an $\mathcal{F}$-BRW with only one type.
Moreover, $\nu$ satisfies a $L \log L$ condition by assumption, 
then the same condition is satisfied by the law of the variables $\sum_{w \in U}\mathbb{U}_{1}^x(w)$ as well.
Indeed, let $\nu_1$ be the common law of the variables $\sum_{w \in U}\mathbb{U}_{1}^x(w)$;
in the induced process there cannot be more offspring than in the original one, whence it is clear that $\nu_1$ is stochastically dominated by $\nu$. Since $f(x):=x \log(x)$ is a nondecreasing 
map on $\mathbb{N}$ then $\mathbb{E}_{\nu_1}[f] \le \mathbb{E}_{\nu}[f] <+\infty$.
%
%
Theorem \ref{thm:notes} applied to this context yields the result.
\end{proof}

\subsection{Proof of Propositions~\ref{prop:fbrw} and~\ref{prop:monotonicity}}\label{sect:proofFBRW}

Proposition~\ref{prop:fbrw} is a direct consequence of the following result, which provides sufficient conditions on $U$ so that the BRW induced by $U$ is projected on a BRW on a set $Y$; by taking a finite $Y$ we have an $\mathcal{F}$-BRW. Given a family of offspring distributions $\{\nu_x\}_{x \in G}$, we consider the BRW induced by $U$ which is associated with the probability measures $\mu^{(U)}_x$ defined in \eqref{eq:muxinduced}, with $\nu_x$ (dependent on $x \in U$) instead of $\nu$.
Henceforth, given a function $g:U \mapsto Y$, we say that a family of offspring distributions $\{\nu_x\}_{x\in G}$ is $g$-invariant if $g(x)=g(y)$ implies 
$\nu_x =\nu_y$
for all $x,y \in U$. 

\begin{Proposition}\label{prop:fbrwgen}
The following are equivalent.
\begin{enumerate}[(a)]
	\item
	There exists a surjective map $g:U\to Y$
	such that, for every $g$-invariant family of offspring distributions,
	the BRW induced by $U$  is projected on a BRW on $Y$ with projection $g$.
	\item
	There exists a surjective map $g:U\to Y$
	and a
	$g$-invariant family of offspring distributions $\{\nu_x\}_{x \in G}$ satisfying $\nu_x(0)<1$ for all $x \in U$, 
	such that 
	the BRW induced by $U$ is projected on a BRW on $Y$ with projection $g$.
	\item
	There exists a surjective  map $g:U\to Y$ 
	such that 
	the quantity $\sum_{w\colon g(w)=y}p_U(x,w)$ ($x \in U$) only depends on $g(x)$ and $y$.
\end{enumerate}
Finally, suppose that $U=G$; if the  BRW on $G$ is projected on a BRW on $Y$ with projection $g$ then the family $\{\nu_x\}_{x \in G}$ is $g$-invariant.

Suppose, in addition, that $\nu_x(0)<1$ for all $x \in G$; then the BRW on $G$ is projected on a BRW on $Y$ with projection $g$ if and only if (c) holds and the family $\{\nu_x\}_{x \in G}$ is $g$-invariant.
\end{Proposition}
\begin{proof}
	To avoid a cumbersome notation, in this proof we write $\mu_x$ instead of $\mu^{(U)}_x$.
	By Definition~\ref{def:F-BRW}, the BRW induced by $U$ is projected on the BRW $(Y, \{\eta_y\}_{y\in Y})$
	(where 
	$\{\eta_y\}_{y\in Y}$ is a collection of probability measures on $S_Y$) if there exists a surjective map $g:U\to Y$ , such that for all $h\in S_Y$,
	\begin{equation}\label{eq:mueta}
		\eta_{g(x)}(h)=\mu_x(\pi_g^{-1}(h)),
	\end{equation}
	where $\pi_g:S_U\to S_Y$, $\pi_g(f)(y)=\sum_{z\in g^{-1}(y)} f(z)$.
	(a) $\Rightarrow$ (b). It is straightforward, once we prove that there exists at least one $g$-invariant family  $\{\nu_x\}_{x \in G}$ such that $\nu_x(0)<1$ for all $x \in U$. Take $\nu_x=\nu$ for all $x\in G$, where $\nu$ is a probability measure on $\mathbb N$ with $\nu(0)<1$.\\
	(b) $\Rightarrow$ (c). Let $g$ and  $\{\nu_x\}_x$ be as in (b).
	By hypothesis 
	\eqref{eq:mueta} holds for a suitable offspring family $\{\eta_y\}_{y\in Y}$.
	Let us compute $\mu_x(\pi_g^{-1}(h))$ for a generic $h \in S_Y$; observe that if $\pi_g(f)=h$ then $|f|=|h|$ (where $|f|=\sum_{x \in G} f(x)$). We have
	\[
	\begin{split}
		\eta_{g(x)}(h)&=\mu_x(\pi_g^{-1}(h))=
		\sum_{f \in \pi_g^{-1}(h)} \mu_x(f)\\
		&=
		\sum_{f \in \pi_g^{-1}(h)}  \sum_{n=|f|}^\infty \nu_x(n) \Big (
		1- \sum_{w \in U} p_U(x,w)) \Big )^{n-|f|}\binom{n}{|f|}\frac{|f|!}{\prod_w f(w)!}\prod_{w\in U}p_U(x,w)^{f(w)}\\
		&=
		\sum_{n=|h|}^\infty  \nu_x(n) \Big (
		1- \sum_{w \in U} p_U(x,w)) \Big )^{n-|h|}\binom{n}{|h|}\sum_{f \in \pi_g^{-1}(h)} \frac{|h|!}{\prod_{w \in U} f(w)!}\prod_{w \in U}p_U(x,w)^{f(w)}.
	\end{split}
	\]
	Observe that if $f\in \pi^{-1}_g(h)$ and $f_y:=f|_{g^{-1}(y)}$ then $|f_y|=h(y)$ for all $y \in Y$.
	Let us consider the last sum in the above equation
	\[
	\begin{split}
		\sum_{f \in \pi_g^{-1}(h)}& \frac{|h|!}{\prod_{w \in U} f(w)!}\prod_{w \in U} p_U(x,w)^{f(w)}=
		\sum_{f \in \pi_g^{-1}(h)} \frac{|h|!}{\prod_{w \in U} f(w)!}\prod_{y \in Y} \prod_{w \colon g(w)=y}p_U(x,w)^{f(w)}\\
		&=
		\sum_{f \in \pi_g^{-1}(h)} \frac{|h|!}{\prod_{y \in Y} h(y)!}\prod_{y \in Y} \Big [\frac{h(y)!}{\prod_{w \colon g(w)=y} f(w)!} \prod_{w \colon g(w)=y}p_U(x,w)^{f(w)}
		\Big ]\\
		&=
		\frac{|h|!}{\prod_{y \in Y} h(y)!}\sum_{\stackrel{\{f_y\}_{y \in Y} \colon}{f_y \in S_{g^{-1}(y)},|f_y|=|h(y)|, \forall y \in Y}} \prod_{y \in Y} \Big [\frac{|f_y|!}{\prod_{w \colon g(w)=y} f_y(w)!} \prod_{w \colon g(w)=y}p_U(x,w)^{f_y(w)}
		\Big ]\\
		&=
		\frac{|h|!}{\prod_{y \in Y} h(y)!}
		\prod_{y \in Y} 
		\Big (\sum_{w \colon g(w)=y} 
		p_U(x,w)
		\Big )^{h(y)}.
	\end{split}
	\]
	Whence
	\begin{equation}\label{eq:2ndformmeasure}
		\mu_x(\pi_g^{-1}(h))=
		\sum_{n=|h|}^\infty  \nu_x(n) 
		\frac{n!}{(n-|h|)!\prod_{y \in Y} h(y)!}
		\Big (
		1- \sum_{w \in U} p_U(x,w)) \Big )^{n-|h|}
		\prod_{y \in Y} 
		\Big (\sum_{w \colon g(w)=y} 
		p_U(x,w)
		\Big )^{h(y)}.
	\end{equation}
	Since $\eta_{g(x)}(h)=\mu_x(\pi_g^{-1}(h))$ then the R.H.S. of \eqref{eq:2ndformmeasure} only depends on $g(x)$ for each fixed $h$.
	If we consider $h:=\mathbf{0}$, then $\pi_g^{-1}(h)=\mathbf{0}$ and
	\[
	\mu_x(\pi_g^{-1}(h))=\mu_x(\mathbf{0})=\sum_{n=0}^\infty \nu_x(n)\left(1-\sum_{w\in U}p_U(x,w)\right)^{n}=\eta_{g(x)}(0),
	\]
	which depends on $x$ only through $g(x)$. Since $\nu_x(0)<1$ then $z \mapsto \sum_{n=0}^\infty \nu_x(n) z^n$ is strictly increasing; moreover $\nu_x$ only depends on $g(x)$, thus $\sum_{w\in U}p_U(x,w)$ only depends on $g(x)$.
	If we take now $h=k\delta_y$, where $k>0$ is such that $\nu(k)>0$, then
	$\eta_{g(x)}(k \delta_y)$ again must depend only on $g(x)$ and coincide with $\mu_x(\pi_g^{-1}(k \delta_y))$.
	By definition of the map $\pi_g$,
	\[
	\pi_g^{-1}(k\delta_y)=\{f\in S_U\colon |f|=k, \mathrm{supp}(f) \subseteq g^{-1}(y)\}.
	\]
	\[\begin{split}
		\mu_x(\pi_g^{-1}(k\delta_y))&=
		\sum_{n=k}^\infty \nu_x(n)
		\binom{n}{k} \left(1-\sum_{z\in U}p_U(x,z)\right)^{n-k}
		\left (\sum_{w\colon g(w)=y}p_U(x,w) \right )^k.\\
	\end{split}
	\]
	This quantity must depend on $x$ only through $g(x)$ and, since $\sum_{z\in U}p_U(x,z)$ only depends on $g(x)$, then 
	$\sum_{n=k}^\infty \nu_x(n) \binom{n}{k} \left(1-\sum_{z\in U}p_U(x,z)\right)^{n-k}>0$
	only depends on $g(x)$.
	In turn, this implies that 
	$\sum_{w\colon g(w)=y}p_U(x,w)$ only depends on $g(x)$ and $y$, which is the claim.
	\\
	(c) $\Rightarrow$ (a).
	Let  $\pi_g:S_U\to S_Y$ be as in Definition \ref{def:F-BRW},  consider a fixed family of $g$-invariant offspring distributions $\{\nu_x\}_{x \in G}$ and the related BRW where $\mu_x$ is defined as in equation~\eqref{eq:muxinduced} with $\nu_x$ instead of $\nu$.
	For any given $h \in S_Y$, by  equation~\eqref{eq:2ndformmeasure}, we have that
	\[
	\mu_x(\pi_g^{-1}(h))=
	\sum_{n=|h|}^\infty  \nu_x(n) \frac{n!}{(n-|h|)!\prod_{y \in Y} h(y)!}\Big (
	1- \sum_{w \in U} p_U(x,w)) \Big )^{n-|h|}
	\prod_{y \in Y} 
	\Big (\sum_{w \colon g(w)=y} 
	p_U(x,w)
	\Big )^{h(y)} 
	\]
	and from (c) we have that the R.H.S.~only depends on $g(x)$.
	Whence $\eta_{g(x)}(h):=\mu_x(\pi_g^{-1}(h))$ is a well-posed definition of a family of probability measures $\{\eta_y\}_{y \in Y}$ on $S_Y$. Thus, the BRW on $U$ is clearly projected on $(Y,\{\eta_y\}_{y \in Y})$.
	
	Finally,
	suppose that the BRW on $G$ is projected on $(Y, \{\eta_y\}_{y \in Y})$ with projection $g$ (here $U=G$); thus $\mu_x=\mu_x^{(G)}$ and, by using equation~\eqref{eq:defmu} (with $\nu_x$ instead of $\nu$) for all $n \in \mathbb{N}$, it is easy to see that
	\[
	\nu_x(n)=	\mu_x(f \in S_G \colon |f|=n)=\mu_x \big (\pi_g^{-1}(h \in S_Y\colon |h|=n)\big )=\eta_{g(x)}\big (\pi_g^{-1}(h \in S_Y\colon |h|=n)\big )
	\]
	which depends only on $g(x)$. Hence, $\{\nu_x\}_{x \in G}$ is $g$-invariant (we do not need here the additional hypothesis $\nu_x(0)<1$ for all $x \in G$).
	The claim follows from the equivalence between (a), (b) and (c).
\end{proof}

Now we can prove Propositions \ref{prop:fbrw} and \ref{prop:monotonicity}.

\begin{proof}[Proof of Proposition~\ref{prop:fbrw}]
 When $\nu_x$ does not depend on $x \in U$ then, 
the family $\{\nu_x\}_{x \in G}$ is trivially $g$-invariant, for every function $g$ defined on $U$. Thus, this condition is unnecessary and can be removed from the statement of Proposition~\ref{prop:fbrwgen} obtaining the statement of Proposition~\ref{prop:fbrw} when $Y$ is finite. 
\end{proof}

\begin{proof}[Proof of Proposition~\ref{prop:monotonicity}]
According to \cite[Theorem 2.4]{BZ_genfunapproach}, positive probability of global survival for an $\mathcal F$-BRW on $U$ is equivalent to 
$\liminf_{n\to\infty}\sqrt[n]{\sum_{y\in U}m^{(n)}_{x,y}}>1$. Since here $m_{x,y}=\mathbf{m}\cdot p_U(x,y)$, we have the claim.
\end{proof}

\subsection{Proof of Proposition~\ref{prop:densenopersistence}}\label{sect:proofdensenopersistence}

For convenience of the reader, we recall the following result, which will be useful to prove positive probability of survival in subsets
(see \cite[Theorem 3.3]{BZ_stronglocal} and  \cite[Theorem 4.1]{BertacchiZuccaAndFriends} for more details).
Here $\mathbf{q}_0(x,B)$ denotes the probability that the BRW starting with one particle in $x$ never visits $B$. 

\begin{Theorem}\label{th:1}
For any BRW on a set $\mathcal{X}$  
and $A,B \subseteq \mathcal{X}$, the following statements are equivalent:
\begin{enumerate}[(i)]
\item there exists $x \in \mathcal{X}$ such that $\mathbf{q}(x,A) < \mathbf{q}(x,B)$
\item there exists $x \in \mathcal{X}$ such that $\mathbf{q}(x,A) < \mathbf{q}_{0}(x,B)$
\item there exists $x \in \mathcal{X}$ such that,
starting from $x$
there is a positive probability of survival in $A$ without ever visiting $B$
\item there exists $x \in \mathcal{X}$ such that, starting from $x$ there is a positive probability of survival in $A$ and extinction in $B$
starting from $x$
\item 
\[
\inf_{x \in \mathcal{X}\colon \mathbf{q}(x,A)<1} \frac{1-\mathbf{q}(x,B)}{1-\mathbf{q}(x,A)}=0.
\]
\item $\mathbf{q}(A \cup B) < \mathbf{q}(B)$.
\end{enumerate}
In particular if any of the above holds, then $\mathbf{q}(A)< \mathbf{1}$ and $\mathbf{q}(\mathcal{X}) < \mathbf{q}(B)$.

\noindent Finally, if we replace $A$ by $A \setminus B$ in $(i)-(vi)$ we obtain a set of conditions equivalent to the above ones.
\end{Theorem}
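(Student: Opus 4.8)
The plan is to treat Theorem~\ref{th:1} as essentially a restatement of \cite[Theorem 3.3]{BZ_stronglocal} --- which proves the equivalence of conditions (i)--(iv) for an arbitrary pair of subsets of the state space of \emph{any} BRW --- together with \cite[Theorem 4.1]{BertacchiZuccaAndFriends}, which upgrades (i) to the ``amplified'' form (v). So what I actually have to supply is: (a) the verification that those statements apply here verbatim; (b) the insertion of (vi) into the equivalence class; and (c) the three closing assertions. Throughout I will use three elementary facts: $\mathbf{q}(x,\cdot)$ is non-increasing in its set argument; $\mathbf{q}_0(x,B)\le\mathbf{q}(x,B)$ (never visiting $B$ is stronger than visiting it finitely often); and irreducibility of the BRW promotes a strict inequality between extinction probabilities holding at one vertex to one holding at every vertex, so that all the vector inequalities in the statement may be read coordinatewise.

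For (b): condition (vi), $\mathbf{q}(A\cup B)<\mathbf{q}(B)$, is by the last fact equivalent to ``$\mathbf{q}(x,A\cup B)<\mathbf{q}(x,B)$ for some $x$'', i.e.\ to condition (i) applied to the pair $(A\cup B,\,B)$ in place of $(A,B)$. Applying the equivalence (i)$\Leftrightarrow$(iii) from \cite[Theorem 3.3]{BZ_stronglocal} to that pair, (vi) holds iff there is positive probability, from some vertex, of survival in $A\cup B$ while never visiting $B$; but a BRW that never visits $B$ lives entirely in $B^\complement$, so survival in $A\cup B$ under this constraint means that infinitely many particles sit in $(A\cup B)\setminus B=A\setminus B$ --- the same event as survival in $A$ while never visiting $B$. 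Hence (vi) is equivalent to condition (iii) for the pair $(A,B)$, and therefore to (i)--(v). (The regenerative direction (iv)$\Rightarrow$(iii) is transparent if one wants a self-contained route: on the event ``survival in $A$, extinction in $B$'' let $T<\infty$ be the last time some particle occupies $B$; since only finitely many particles are alive at time $T+1$, one of their subtrees still contains infinitely many particles in $A$, and its root is at some vertex $y\notin B$, exhibiting positive probability from $y$ of survival in $A$ with no visit to $B$.)

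For (c): if any of (i)--(vi) holds then in particular (i) gives $\mathbf{q}(x,A)<\mathbf{q}(x,B)\le1$ for some $x$, whence $\mathbf{q}(A)<\mathbf{1}$ by irreducibility; and $\mathbf{q}(x,\mathcal{X})\le\mathbf{q}(x,A\cup B)<\mathbf{q}(x,B)$ by monotonicity together with (vi), so $\mathbf{q}(\mathcal{X})<\mathbf{q}(B)$. Finally, replacing $A$ by $A\setminus B$ leaves conditions (iii), (iv) and (vi) \emph{literally} unchanged --- each refers to $A$ only through the way $A$ meets $B^\complement$, and $(A\setminus B)\cup B=A\cup B$ --- so, applying the equivalence just established to the pair $(A\setminus B,B)$, the two systems of six conditions are equivalence classes sharing a member and hence coincide.

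The real obstacle is the content imported from \cite{BZ_stronglocal,BertacchiZuccaAndFriends}: the equivalences (i)$\Leftrightarrow$(ii)$\Leftrightarrow$(iii) and the amplification (i)$\Rightarrow$(v) cannot be obtained by manipulating events, since a strict inequality between extinction \emph{probabilities} need not arise from any inclusion between the corresponding events. Proving them requires the fixed-point (generating-function/Green-kernel) description of BRW local-survival probabilities --- e.g.\ that the local-extinction vector of the BRW obtained by killing every particle that enters $B$ is the maximal fixed point $\le\mathbf{1}$ of the relevant operator, and that $\mathbf{q}(\cdot,A)<\mathbf{q}(\cdot,B)$ forces this vector to be $<1$ somewhere --- together with the regenerative structure of the genealogical tree used to compound the ratio $(1-\mathbf{q}(\cdot,B))/(1-\mathbf{q}(\cdot,A))$ down to $0$. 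Accordingly, the proof I would write consists of the soft reductions above plus a direct appeal to those two theorems for the hard core.
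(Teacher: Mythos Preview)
Your approach matches the paper's: the paper does not prove Theorem~\ref{th:1} at all but presents it as a restatement of \cite[Theorem 3.3]{BZ_stronglocal} together with \cite[Theorem 4.1]{BertacchiZuccaAndFriends}, exactly as you identify. The soft reductions you supply --- inserting (vi) via the observation that survival in $A\cup B$ without visiting $B$ is the same event as survival in $A$ (equivalently $A\setminus B$) without visiting $B$, deducing $\mathbf{q}(A)<\mathbf{1}$ and $\mathbf{q}(\mathcal{X})<\mathbf{q}(B)$ from (i) and (vi) respectively, and noting that (iii), (iv), (vi) are literally invariant under $A\mapsto A\setminus B$ --- are correct and fill in details the paper leaves implicit.
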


The starting points $x$ in Theorem \ref{th:1} are related. 
The vertices $x$ satisfying (i) and (iv) are the same. The vertices $x$ satisfying (ii) and (iii) are the same. 
If $x$ satisfies, say, (ii) then it satisfies 
(i).
On the other hand, 
 it follows from the proof that if $x$ satisfies, say (i) then every
$x^\prime$ satisfying (ii) must be among the vertices reachable by the progeny of a particle living at $x$.

In order to prove Proposition~\ref{prop:densenopersistence} we need a lemma regarding the behavior of $\mathbf{q}_0(y,\{x\})$ on $\mathbb{T}_d$. 

\begin{Lemma}\label{lem:densenopersistence}
%
Consider the edge-breeding BRW on $\mathbb{T}_d$ with $1/d < \lambda \le 1/2\sqrt{d-1}$.
For any fixed $y \in \mathbb{T}_d$ and $x_1, x_2 \in \mathbb{T}_d $ such that $d(y,x_1)< d(y,x_2)$ we have $\mathbf{q}_0(y, \{x_1\})<\mathbf{q}_0(y,\{x_2\})$. Moreover, there exists $c >0$ and $\delta <1
 $ such that $1-\mathbf{q}_0(y,\{x\}) \le c \delta^{d(y,x)}$.
\end{Lemma}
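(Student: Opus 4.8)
The statement concerns the BRW on $\mathbb{T}_d$ (with $\lambda\in(1/d,\,1/2\sqrt{d-1}]$, as fixed in Section~\ref{sec:boundarymeasure}, so that there is global survival but a.s.\ local extinction on every finite set). I would first reduce everything to a one-dimensional quantity. Fix a base vertex $y$ and a target vertex $x$ at distance $k:=d(y,x)$. Because we only care about \emph{whether} a particle ever reaches $x$, and because the BRW on a tree restricted to the path-to-$x$ behaves like a branching random walk on $\mathbb{Z}_{\ge 0}$ (a particle's depth-coordinate relative to $x$), I would introduce $r_k:=1-\mathbf{q}_0(y,\{x\})$, the probability that the progeny of a single particle at distance $k$ from $x$ ever visits $x$, and observe by translation/rotation invariance of $\mathbb{T}_d$ that this depends only on $k$. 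The first claim, $\mathbf{q}_0(y,\{x_1\})<\mathbf{q}_0(y,\{x_2\})$ whenever $d(y,x_1)<d(y,x_2)$, is then equivalent to $r_k$ being strictly decreasing in $k$; this follows because any genealogical line reaching the farther vertex $x_2$ must first pass through the vertex $x_1$ lying on the geodesic from $y$ to $x_2$ (here I use that $\mathbb{T}_d$ is a tree, so $x_1$ separates $y$ from $x_2$), hence $r_{d(y,x_2)}\le r_{d(y,x_1)}\cdot \sup_z r_{d(z,x_2)}$ with $z$ ranging over vertices at distance $d(x_1,x_2)\ge 1$ from $x_2$; and $\sup_z r_{d(z,x_2)}=r_{d(x_1,x_2)}<1$ by local extinction. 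Strictness is immediate from $r_j<1$ for all $j\ge 1$.

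For the quantitative bound I would set up a one-step recursion for $r_k$. A particle at distance $k\ge 1$ from $x$ produces $L\sim\nu$ children (with $\nu$ the edge-breeding offspring law, $\mathbb{E}L=\lambda d$), each placed uniformly among the $d$ neighbours; exactly one neighbour lies on the geodesic towards $x$ (call a child that lands there ``good'', probability $1/d$ per child), all others land at distance $k+1$ from $x$ and, being in a different subtree, can only reach $x$ by first returning to distance $k$ — i.e.\ through an independent copy of the level-$k$ event. Conditioning on $L$ and using independence of the children's lines, one gets
\begin{equation}\label{eq:rk-recursion}
1-r_k = \Ebrw\Bigl[\bigl(1-\tfrac1d r_{k-1}-\tfrac{d-1}{d}r_{k+1}'\bigr)^{L}\Bigr],
\end{equation}
where $r_{k+1}'$ is the probability that a line started at distance $k+1$ ever reaches $x$; a cleaner route is to bound $r_k$ from above by ignoring the contribution of the ``bad'' children entirely, giving $r_k\le \Ebrw[1-(1-\tfrac1d r_{k-1})^{L}]\le \tfrac1d r_{k-1}\,\Ebrw[L]=\lambda r_{k-1}$ by convexity (the inequality $1-(1-a)^L\le aL$). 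Wait — this needs $\lambda<1$, which is false when $d$ is small; so instead I would use a sharper comparison: reaching $x$ at distance $k$ requires a genealogical line that makes net $k$ steps ``inward'', and the generating-function analysis of the underlying branching random walk on $\mathbb{Z}$ (exactly the content behind \cite[Proposition 4.33]{Bertacchi-Zucca-book}) shows that in the local-extinction regime $\lambda\le 1/2\sqrt{d-1}$ the probability of ever reaching a point $k$ steps inward decays exponentially, i.e.\ $r_k\le c\,\delta^k$ for some $\delta<1$ and $c>0$ depending only on $d,\lambda$.

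Concretely, the key step — and the main obstacle — is extracting the exponential rate $\delta$. I expect to do this via the following: let $\rho^*$ denote the spectral radius (on $\mathbb{Z}$) of the ``inward-drift'' random walk associated to the edge-breeding BRW; local extinction on $\mathbb{T}_d$ is precisely the statement $\lambda\cdot(\text{inward return series})\le 1$, and by a standard first-passage / generating-function argument (as in the proof of \cite[Theorem 4.1]{Zucca} or the $F$-function identities recalled in Section~\ref{subsec:densenosurv}) the first-passage probability to a point $k$ steps away is $F_k$ with $\limsup_k F_k^{1/k}=:\delta<1$ exactly in this regime. Then $r_k\le \sum_{j\ge 1}($number of lines$)\cdots$ — more simply, $r_k$ is dominated by the expected number of particles of the induced BRW that ever hit $x$, which by the Kesten–Stigum-type estimates (Lemma~\ref{lemma:step2}, applied to the path) is $\Ebrw[\mathbb{U}^y_\infty(x)]\le \sum_n (\lambda d)^n p^{(n)}_{\text{path}}(y,x)$, and this series has exponential decay rate $<1$ precisely because $\lambda d\cdot\rho_{\mathbb{Z}_{\ge 0}}<1$ in the local-extinction regime. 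Choosing $\delta$ slightly above that rate and $c$ to absorb the finitely many small $k$ gives $1-\mathbf{q}_0(y,\{x\})=r_{d(y,x)}\le c\,\delta^{d(y,x)}$, as required. The delicate point to get right is that $\delta<1$ \emph{strictly}, which uses the strict inequality in $\lambda\le 1/2\sqrt{d-1}$ at the critical point carefully — at $\lambda=1/2\sqrt{d-1}$ one is at the boundary of local survival, and one must invoke that at criticality local extinction still holds (this is why the interval in Section~\ref{sec:boundarymeasure} is closed on the right) together with the fact that the first-passage generating function still has radius of convergence bounded away from the relevant point, so $\delta<1$ persists.
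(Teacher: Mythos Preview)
Your monotonicity argument has a gap: the multiplicative bound $r_{d(y,x_2)} \le r_{d(y,x_1)} \cdot r_{d(x_1,x_2)}$ is not valid for a branching process. When the progeny first reaches $x_1$ there may be many particles alive (at $x_1$ and on the $y$-side of $x_1$), and each has its own chance of eventually producing a descendant at $x_2$; this branching amplification destroys the product bound. The tree-separation idea does give the \emph{non-strict} inequality $r_{d(y,x_2)} \le r_{d(y,x_1)}$ immediately (every genealogical line to $x_2$ must pass through $x_1$), and strictness can then be recovered by exhibiting a single event of positive probability in which $x_1$ is visited but $x_2$ is not---for instance, a single line of descent of length $d(y,x_1)$ walking straight to $x_1$ and then dying. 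The paper argues monotonicity differently, via the fixed-point relation $G(\mathbf{q}_0(\{o\})|x)=\mathbf{q}_0(x,\{o\})$ for the edge-breeding generating function $G(\mathbf{v}|x)=1/(1+M(1-\mathbf{v})(x))$ together with a Maximum Principle from \cite{BZ_stronglocal}.

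For the exponential decay, your eventual idea---bounding $r_k$ by the expected total number of visits, $\sum_n (\lambda d)^n p^{(n)}(y,x)=G(y,x\mid \lambda d)$---is correct and is a genuinely different and cleaner route than the paper's. On $\mathbb{T}_d$ one has $G(y,x\mid z)=G(o,o\mid z)\,[F_1(z)]^{d(y,x)}$ with $F_1$ the one-step first-passage generating function; at $z=\lambda d\le 1/\rho_G$ the factor $G(o,o\mid \lambda d)$ is finite (simple RW on $\mathbb{T}_d$, $d\ge 3$, is $\rho_G$-transient) and $F_1(\lambda d)\le F_1(1/\rho_G)=1/\sqrt{d-1}<1$, so $\delta:=F_1(\lambda d)<1$ \emph{including} at the critical $\lambda=1/(2\sqrt{d-1})$. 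Your hesitation about criticality is unfounded, and the detours through a ``path'' spectral radius and Lemma~\ref{lemma:step2} should be dropped. The paper, by contrast, works directly with the recursion $a_{n+1}=a_n/(\lambda(d-1)(1-a_n))-a_{n-1}/(d-1)$ derived from the fixed-point equation and shows $a_{n+1}/a_n\to (1-\Delta/(d-1))/(1+\Delta)<1$ where $\lambda=(1+\Delta)/d$; this is more hands-on but gives an explicit asymptotic rate.
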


\begin{proof}
 Let us note that, by symmetry, $\mathbf{q}_0(y,\{x\})=\mathbf{q}_0(x,\{y\})$ and this only depends on $d(x,y)$. Moreover, we know that if $y \neq o$ (where $o$ is the root of the tree $\mathbb{T}_d$) then 
 ${1} > \mathbf{q}(x,\mathbf{T}_y)>
 \mathbf{q}(x,\mathbb{T}_d)$ for all $x \in \mathbb{T}_d$. 
 Again by symmetry and using the results of \cite{BZ2020}, if we define 
 \[
  r(x,y):=
  \begin{cases}
   d(x,y) & \text{if } x \not \in \mathbf{T}_y \\
   -d(x,y) & \text{if } x \in \mathbf{T}_y\\
  \end{cases}
   \]
then $\mathbf{q}(x, \mathbf{T}_y)$ is strictly increasing with respect to $r(x,y)$. Moreover, by \cite[Corollary 4.2]{BertacchiZuccaAndFriends} we know that $\sup_{x \in \mathbb{T}} \mathbf{q}(x, \mathbf{T}_y)=1$. As a consequence, by the above monotonicity,
\begin{equation}\label{eq:limitmoyal}
\lim_{r(x,y) \to +\infty} \mathbf{q}(x, \mathbf{T}_y)=
 \lim_{d(x,y) \to +\infty, \, x \not \in \mathbf{T}_y} \mathbf{q}(x, \mathbf{T}_y)  =1.
 \end{equation}
 By the symmetry of the tree, it
 is enough to prove that
 $\lim_{x \to \partial \mathbb{T}_d} \mathbf{q}_0(o,\{x\})=1$ or, equivalently,
 $\lim_{x \to \partial \mathbb{T}_d} \mathbf{q}_0(x,\{o\})=1$. 
To this aim, let us fix a neighbor $x_1$ of $o$ and the subtree $\overline{ \mathbf{T}}:=\mathbb{T}_d \setminus \mathbf{T}_{x_1}$. In order to survive in $\overline{ \mathbf{T}}$ for a BRW starting with one particle in $x \not \in \overline{\mathbf{T}}$ it is necessary to visit $o$,
 therefore
 \[
1-\mathbf{q}(x, \overline{ \mathbf{T}}) \ge (1-\mathbf{q}_0(x,\{o\})) (1-\mathbf{q}(o, \overline{\mathbf{T}})).
 \]
 From the above inequality and equation~\eqref{eq:limitmoyal}, since $\mathbf{q}(o,\overline{\mathbf{T}})<1$ we have that 
  $\lim_{x \to \partial \mathbb{T}_d} \mathbf{q}_0(x,\{o\})=1$.
  
Let us prove 
monotonicity. As before, by symmetry, it is enough to prove the result when $y=o$.  By (2.2) in \cite[Section 3.2]{BZ2020} we have that 
${\bf G}(\mathbf{q}_0(\{o\})| x)=\mathbf{q}_0(x)$ for all $x \neq o$, where $\bf{G}$ is the generating function of the BRW as defined by equation~(3.3) in \cite[Section 3.2]{Zucca}. In this case ${\bf G}(\mathbf{v}|x)=1/(1+M(1-\mathbf{v})(x))$, where $M$ is the 1st moment matrix of the BRW
(see equation (3.4) in \cite[Section 3.2]{Zucca}). Whence, if $a_n:=1-\mathbf{q}_0(o,\{x\})=1-\mathbf{q}_0(x,\{o\})$ (where $d(o,x)=n$) then
  \begin{equation}\label{eq:genfuncttree}
   a_n=\frac{\lambda d \big (a_{n-1}/d+(d-1) a_{n+1}/d \big )}{1+\lambda d \big (a_{n-1}/d+(d-1) a_{n+1}/d \big )}.
  \end{equation}
  We prove by induction on $n$ that $a_n>a_{n+1}$. 
  It is clear that if $x \neq o$ then $\mathbf{q}_0(o,\{x\})=\mathbf{q}_0(o,\mathbf{T}_x)>0=\mathbf{q}_0(o,\{o\})$ (by using 
Theorem~\ref{th:1}), whence $a_0>a_1$.
  Suppose that $a_{n-1}>a_n$, then
  according to the Maximum Principle \cite[Proposition 2.1]{BZ_stronglocal},
  %
  %
  if ${\bf G}(\mathbf{v}|x) \ge \mathbf{v}(x)$ then either $\mathbf{v}(y)=\mathbf{v}(x)$ for all neighbors $y$ of $x$ or there exists a neighbor $y$ of $x$ such that $\mathbf{v}(y) >\mathbf{v}(x)$. 
  Now, since $1-a_{n-1}<1-a_n$ then 
$1-a_{n+1} > 1-a_n$, that is, $a_n > a_{n+1}$.
If we solve \eqref{eq:genfuncttree} for $a_{n+1}$ we have
\begin{equation}\label{eq:genfuncttree2}
 a_{n+1}=\frac{a_n}{\lambda(d-1)(1-a_n)}-\frac{a_{n-1}}{d-1}. 
\end{equation}
Since $a_n <a_{n-1}$ from the previous equation we have
\[
 a_{n+1}=\frac{a_n}{\lambda(d-1)(1-a_n)}-\frac{a_{n}}{d-1}=a_n \frac{d-(1+\Delta)(1-a_n)}{(1+\Delta)(d-1)(1-a_n)}.
\]
where $\Delta:= d \lambda -1$ (recall that $1/d < \lambda \le 1/(2 \sqrt{d-1})$).
Since $a_n \to 0$ we have 
$(d-(1+\Delta)(1-a_n))/((1+\Delta)(d-1)(1-a_n)) \to (1-\Delta/(d-1))/(1+\Delta)) <1$ eventually as $n \to +\infty$.
Whence there exists $\varepsilon>0$ such
that $ (d-(1+\Delta)(1-a_n))/((1+\Delta)(d-1)(1-a_n))\le 1-\varepsilon$ eventually as $n \to +\infty$; thus, 
by using \eqref{eq:genfuncttree2},
$a_{n+1} \le a_n (1-\varepsilon)$ eventually as $n \to +\infty$. This implies that the sequence
$a_n/(1-\varepsilon)^n$  is eventually non-increasing as $n \to +\infty$, thus there exists a finite constant $c \ge a_n/(1-\varepsilon)^n$ for all $n \in \mathbb{N}$. The claim follows by taking $\delta:=1/(1-\varepsilon)$.

\end{proof}

We can now prove Proposition~\ref{prop:densenopersistence} in a constructive way; indeed, the proof explicitly describes how to find the required set.

\begin{proof}[Proof of Proposition~\ref{prop:densenopersistence}]
 By Lemma~\ref{lem:densenopersistence}, $\mathbf{q}_0(o,\{x\})$ only depends on (and it is increasing with respect to) $d(o,x)$ and there exists a sequence 
 $\{r_i\}_{i \in \mathbb{N}}$ of natural numbers such that $\sum_{i \in \mathbb{N}} (1-\mathbf{q}_0(o,\{x_i\})) < +\infty$ for all choices of $\{x_i\}_i$ with $d(o,x_i) \ge r_i$ for all $i \in \mathbb{N}$. Let us enumerate the elements of $\mathbb{T}_d\setminus \{o\}=\{y_1, y_2, \ldots, y_n, \ldots\}$. We choose $\{x_i\}_{i \in \mathbb{N}}$ such that $x_i \in \mathbf{T}_{y_i}$ and $d(o, x_i) \ge r_i$ and we define $A_\emptyset:=\bigcup_{i \in \mathbb{N}}\{x_i\}$. 
 Clearly, $\partial A_\emptyset =\partial \mathbb{T}$ by construction. Moreover,
 $\sum_{i \in \mathbb{N}} (1-\mathbf{q}_0(o,\{x_i\})) < +\infty$, thus, by the Borel-Cantelli's Lemma, only a finite number of vertices of $A_\emptyset$ are visited almost surely by the BRW starting with one particle at $o$.
 
 Observe that, given our choice $\lambda \in (1/d, 1/2\sqrt{d-1}]$ made in Section~\ref{sec:boundarymeasure} (right before Section~\ref{subsec:topologytree}),  survival in a set is equivalent to visiting an infinite number of vertices of the set. This implies that 
 $1=\mathbf{q}(o, A_\emptyset)$ which, in turns, implies
 $1=\mathbf{q}(x, A_\emptyset)$ for all $x \in \mathbb{T}_d$ since the BRW is irreducible.
 Then $\mathbf{q}(A_\emptyset)=\mathbf{q}(\emptyset)$ and the proposition is proven if $B=\emptyset$.

 Consider now a generic $B \subseteq \mathbb{T}_d$. Denote by $\mathcal{S}(A)$ the event ``survival in $A$'' for $A \subseteq \mathbb{T}_d$; the probability of $\mathcal{S}(A)$ for a BRW starting with one particle is $1-\mathbf{q}(x, A)$. Observe that $\mathcal{S}(A \cup B)=\mathcal{S}(A) \cup \mathcal{S}(B)$, therefore
 $\min(\mathbf{q}(x, A),\mathbf{q}(x, B)) \ge \mathbf{q}(x, A \cup B) \ge \mathbf{q}(x, A)+\mathbf{q}(x, B)-1$ for all $x \in X$. 
 If we define $A_B:=A_\emptyset \cup B$ then $\partial A_B =\partial \mathbb{T}_d$ and 
 \[
 \mathbf{q}(B)=
  \min(\mathbf{q}(A_\emptyset),\mathbf{q}(B)) \ge \mathbf{q}(A_B) \ge \mathbf{q}(A_\emptyset)+\mathbf{q}(B)-1=
 \mathbf{q}(B)
 \]
 and the claim is proven.

\end{proof}

Here is another result following from Theorem~\ref{th:1}.
Given $x, y \in \mathbb{T}$, by $\mathbf{T}_{x,y}$ we denote the tree branching from $y$ with respect to the root $x$, that is, $\{z \in \mathbb{T}\colon y \in \varphi_{x,z}\}$. Clearly $y \in \mathbf{T}_{x,y}$.

\begin{Proposition}\label{pro:extensionExtinction}
 Consider a BRW on a tree $\mathbb{T}$. Let $A, B \subseteq \mathbb{T}$ such that $\mathbf{q}(x,A)>\mathbf{q}(x,B)$ for some $x \in \mathbb{T}$. Then
 there exists $y \in \mathbb{T}$ such that
for the BRW starting from $y$ there is positive probability of survival in 
$B$ without ever visiting $\bigcup_{w \in A} \mathbf{T}_{y,w}$.
\end{Proposition}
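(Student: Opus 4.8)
The plan is to read the statement off Theorem~\ref{th:1} applied with the roles of $A$ and $B$ interchanged, and then to observe that, on a tree with a nearest-neighbour kernel, a BRW started at $y$ avoids $A$ if and only if it avoids the a priori larger set $\bigcup_{w\in A}\mathbf{T}_{y,w}$.

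First I would note that, by hypothesis, there is an $x$ with $\mathbf{q}(x,A)>\mathbf{q}(x,B)$, i.e.\ the pair $(B,A)$ satisfies item (i) of Theorem~\ref{th:1} (with $B$ in the role of ``$A$'' and $A$ in the role of ``$B$''). Hence $(B,A)$ also satisfies item (iii): there is a vertex $y\in\mathbb{T}$ from which, with positive probability, the BRW survives in $B$ without ever visiting $A$. Since the initial particle sits at $y$, this forces $y\notin A$; consequently $y\notin\mathbf{T}_{y,w}$ for every $w\in A$ (as $y\in\mathbf{T}_{y,w}$ would require $w\in\varphi_{y,y}=\{y\}$), so $y\notin\bigcup_{w\in A}\mathbf{T}_{y,w}$ and the event to be considered is not trivially empty.

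The one substantive step is then to identify, for a BRW started at $y$, the event $\{\text{never visit }A\}$ with the event $\{\text{never visit }\bigcup_{w\in A}\mathbf{T}_{y,w}\}$. One inclusion is immediate because $A\subseteq\bigcup_{w\in A}\mathbf{T}_{y,w}$ (each $w$ lies on $\varphi_{y,w}$, hence $w\in\mathbf{T}_{y,w}$). For the reverse inclusion, suppose some particle ever occupies a vertex $z\in\mathbf{T}_{y,w}$ with $w\in A$; since $P$ is nearest-neighbour, the ancestral line of that particle is a walk in $\mathbb{T}$ from $y$ to $z$, and on a tree every walk from $y$ to $z$ traverses all vertices of the geodesic $\varphi_{y,z}$, which by definition of $\mathbf{T}_{y,w}$ contains $w$. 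Thus some ancestor visited $w\in A$, i.e.\ the BRW visited $A$, a contradiction.

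Combining the two observations, the BRW started at $y$ has positive probability of surviving in $B$ while never visiting $\bigcup_{w\in A}\mathbf{T}_{y,w}$, which is exactly the assertion. I do not expect any real obstacle: the only point requiring care is the tree-plus-nearest-neighbour identification of the two avoidance events (together with the remark that $y\notin A$), and the rest is a direct invocation of Theorem~\ref{th:1}.
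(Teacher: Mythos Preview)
Your proposal is correct and follows essentially the same route as the paper: invoke Theorem~\ref{th:1} (with the roles of $A$ and $B$ swapped) to obtain a vertex $y$ from which the BRW survives in $B$ without visiting $A$, and then use the tree structure to identify avoidance of $A$ with avoidance of $\bigcup_{w\in A}\mathbf{T}_{y,w}$, via the fact that any path from $y$ into $\mathbf{T}_{y,w}$ must pass through $w$. Your write-up is slightly more explicit (checking $y\notin A$ and spelling out both inclusions), but the argument is the same.
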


\begin{proof}

According to Theorem~\ref{th:1} there exists $y \in \mathbb{T}$ such that
there is positive probability of survival in $B$ without visiting $A$. Given the nature of a tree, in order to visit $\bigcup_{w \in A} \mathbf{T}_{y,w}$ the process must visit $A$, since
any path from $y$ to $\mathbf{T}_{y,w}$ visits $w$. Therefore $\mathbf{q}_0(y,A)=\mathbf{q}_0
\Big(y,\bigcup_{w \in A} \mathbf{T}_{y,w} \Big )$ and there is
survival in $B$ without visiting $\bigcup_{w \in A} \mathbf{T}_{y,w}$.
\end{proof}

This applies, for instance, to the set $A=A_\emptyset$ in Proposition~\ref{prop:densenopersistence}; in this case it is not difficult to see that there is
positive probability of
global survival starting from $o$ without ever visiting $\bigcup_{y \in A_\emptyset} \mathbf{T}_{x,y}$.

\subsection{Proofs of Proposition~\ref{pro:connectedequalboundary}, Proposition~\ref{pro:survivalpositivemeasure} and Theorem~\ref{prop:equivalent} (iii)}\label{sec:proofPropconnectedequalboundary}

We start with a useful lemma which  characterizes the boundary of subsets of a tree. We observe that, by using the construction of the Martin boundary of a tree given in Section~\ref{subsec:topologytree}, clearly, $\varphi \in \partial A$ ($A \subseteq \mathbb{T}$) if and only if $A \cap \mathbf{T}_{\varphi (i)} \neq \emptyset$ for all $i \in \mathbb{N}$; in particular, $\varphi  \in \partial \mathbf{T}_x$ if and only if $x=\varphi (i)$ for some $i \in \mathbb{N}$. 
Finally, given $x \in \mathbb{T}, \, \xi \in \partial \mathbb{T}$ we denote by $\varphi _{x,\xi}$ the unique path from $x$ converging to $\xi$.

\begin{Lemma}\label{lem:boundaryA}
 For any subset $B \subseteq \mathcal{M}(P,1)$ the following are equivalent:
 \begin{enumerate}[(1)]
  \item $B$ is a closed subset 
  \item there exists a connected subset  $A \subseteq \mathbb{T}$ such that $B=\partial A$.
 \end{enumerate}

\end{Lemma}

\begin{proof}
We note that $B \subseteq \mathcal{M}(P,1)$ is closed in $\widehat{\mathbb{T}}$ if and only if it is closed in the induced topology on $\mathcal{M}(P,1)$.
%

\noindent $(2) \Longrightarrow (1)$. It follows from the fact that any topological boundary is a closed subset.


\noindent $(1) \Longrightarrow (2)$. Let us define $A:=\{x \in \mathbb{T} \colon \exists \varphi \in 
B,\, i \in \mathbb{N} \text{ such that } \varphi (i)=x\}$. It is clear that $\partial A \supseteq B$. Suppose that $\varphi  \not \in B$, since $B$ is closed then there exists $y \in \mathbb{T}$ such that $\varphi  \in \partial \mathbf{T}_y$ and $\partial \mathbf{T}_y \cap B=\emptyset$. This implies that for every $\varphi^\prime \in B$, $\varphi^\prime \cap \partial \mathbb{T}_y=\emptyset$. Therefore,
by the definition of $A$, $A \cap \mathbf{T}_y =\emptyset$ and then $\varphi \not \in \partial A$.
$A$ is clearly connected, if nonempty, since every vertex is connected to $\varphi(0)=o$.
%
%
\end{proof}

\begin{Remark}\label{rem:connectedborder}
In a tree $\mathbb{T}$ with root $o$ there is another way to construct a set whose boundary is a given closed subset $B$ of $\partial \mathbb{T}$.
Since $B$ is closed, $\partial \mathbb{T} \setminus B=
\bigcup_{x \in I} \partial \mathbf{T}_{x}$
for a suitable choice of $I \subseteq \mathbb{T}_d$ such that $\{\mathbf{T}_{x}\}_{x \in I}$ are mutually disjoint. 
Define $A:=\mathbb{T} \setminus \bigcup_{x \in I} \mathbf{T}_{x}$
It is easy to see that $A$ is connected and $B=\partial A$; indeed,
$\partial A \cap \partial \mathbf{T}_x=\emptyset$ for all $x \in I$, whence $\partial A \subseteq B$. However, if $\varphi \in B$, then $I \cap \varphi=\emptyset$ whence $\varphi \subseteq B$ and then $\varphi \in \partial A$. Clearly, if $B=\emptyset$ then $A=\emptyset$. 

The choice of $\{\mathbf{T}_x\}_{x \in I}$ is not unique and in general $A \supseteq \bigcup_{\varphi \in B} \varphi$, where the latter is the set constructed in Lemma~\ref{lem:boundaryA}. It is easy to prove that there is a unique family $\{\mathbf{T}_x\}_{x \in I}$ such that $\partial \mathbb{T} \setminus B=
\bigcup_{x \in I} \partial \mathbf{T}_{x}$ and where each subtree $\mathbf{T}_x$ is maximal with respect to the set inclusion. With this choice it is not difficult to see that $\mathbb{T} \setminus \bigcup_{x \in I}  \mathbf{T}_{x}=\bigcup_{\varphi \in B} \varphi$.
\end{Remark}

 We describe now the generic connected subset of a tree in the following lemma. Recall that a leaf in a graph is a vertex with only one neighbor.

\begin{Lemma}\label{lem:connected}
Let $A \subseteq \mathbb{T}$ be a connected subset of a tree with root $o$. Denote by $\partial A$ the boundary of $A$, by $L_A$ the set of leaves of $A$
and let $n_A:=\min\{d(o,x) \colon x \in A\}$. Then there exists only one vertex $\bar x \in A$ such that $d(o,\bar x)=n_A$; moreover $A=\{y \in \mathbb{T}\colon y=\varphi _{\bar x, w}(i), \, i \in \mathbb{N}, \, w \in \partial A \cap L_A \}$.
\end{Lemma}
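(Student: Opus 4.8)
The plan is to identify $\bar x$ as a common ancestor of the entire set $A$, and then obtain the explicit description of $A$ by extending geodesics as far as possible ``downwards'' inside $A$. Since $n_A$ is the minimum of a nonempty set of nonnegative integers it is attained, so some vertex $\bar x\in A$ with $d(o,\bar x)=n_A$ exists. For uniqueness I would argue by contradiction: if $x_1\ne x_2$ in $A$ both realised $n_A$, then neither is an ancestor of the other, so their meet $c:=x_1\wedge x_2$ is a strict common ancestor with $d(o,c)<n_A$; but connectedness of $A$ forces the geodesic $\varphi_{x_1,x_2}$, and hence $c$, to lie in $A$, contradicting the minimality of $n_A$. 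The same principle --- a connected subset of a tree contains the geodesic joining any two of its vertices --- applied to $\bar x$ and an arbitrary $y\in A$, shows $\bar x\wedge y\in A$; being an ancestor of $\bar x$ it has depth $\le n_A$, being in $A$ it has depth $\ge n_A$, so $\bar x\wedge y=\bar x$. Thus $\bar x$ lies on $\varphi_{o,y}$ for every $y\in A$, and consequently every vertex of $\varphi_{\bar x,y}$ (which is contained in $A$) has $\bar x$ as an ancestor.

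For the inclusion of $A$ into the claimed set, given $y\in A$ I would perform a \emph{maximal descent}: starting from $y_0:=y$, whenever $y_k$ has a neighbour in $A$ at distance $d(o,y_k)+1$ --- that is, whenever $y_k\notin L_A$ --- choose such a neighbour $y_{k+1}$. If this stops at some $y_m$, then $y_m\in L_A$ and $\bar x,y$ and the whole chain lie on the single geodesic $\varphi_{\bar x,y_m}$, so $y=\varphi_{\bar x,y_m}(d(o,y)-n_A)$ with $y_m\in L_A$. If the descent never stops, the descending chain $(y_k)_k$ determines an end $\xi$, and one checks $\xi\in\partial A$ from the base of open sets for $\widehat{\mathbb T}$ recalled in Section~\ref{sec:boundarymeasure}: at levels $\le n_A$ the relevant vertex is an ancestor of $\bar x\in A$, and at deeper levels it is itself a vertex of $A$ (either on $\varphi_{\bar x,y}\subseteq A$ or one of the $y_k$), so $A\cap\mathbf T_{\xi(i)}\ne\emptyset$ for every $i$. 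Then $\bar x\in\varphi_{o,\xi}$ and $y=\varphi_{\bar x,\xi}(d(o,y)-n_A)$ with $\xi\in\partial A$. For the reverse inclusion: if $w\in L_A\subseteq A$ then $\varphi_{\bar x,w}\subseteq A$ by connectedness; if $w=\xi\in\partial A$, then for each level $i$ there is $a_i\in A$ with $\xi(i)$ on $\varphi_{o,a_i}$, and since $\bar x$ is an ancestor of $a_i$ it follows first that $\bar x\in\varphi_{o,\xi}$, and then, taking $i\ge n_A+j$, that $\xi(n_A+j)$ lies on $\varphi_{\bar x,a_i}\subseteq A$; hence every vertex of the ray $\varphi_{\bar x,\xi}$ lies in $A$.

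I expect the only genuinely delicate step to be the infinite-descent case: one must make sure that the limiting end really belongs to $\partial A$ (and not merely to the closure of some larger set) and that $\bar x$ sits on the ray converging to it. Both are clean consequences of the first paragraph --- that $\bar x$ is an ancestor of every vertex of $A$ --- together with the explicit description of neighbourhoods in $\widehat{\mathbb T}$; the rest is elementary tree combinatorics resting only on the fact that a path between two vertices of a tree contains their geodesic.
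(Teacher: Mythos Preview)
Your argument is correct and follows essentially the same route as the paper's: uniqueness of $\bar x$ via the meet $x_1\wedge x_2$ lying on the geodesic in $A$, the inclusion $A\subseteq\widetilde A$ by a maximal descent which either terminates in a leaf or produces a boundary point, and the reverse inclusion by connectedness. You are in fact more careful than the paper in two places --- you explicitly record that $\bar x$ is an ancestor of every $y\in A$, and you actually verify that for $\xi\in\partial A$ each vertex of $\varphi_{\bar x,\xi}$ lies in $A$ (the paper dispatches this with a one-line ``by connection''); note also that both you and the paper tacitly read the index set as $\partial A\cup L_A$ rather than the stated $\partial A\cap L_A$, which is clearly the intended meaning.
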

\begin{proof}
Note that if $x,y \in A$ and $A$ is connected then the path $\varphi _{x,y}$ is in $A$.
Denote by $\widetilde A:=\{y \in \mathbb{T}\colon y=\varphi _{\bar x, w}(i), \, i \in \mathbb{N}, \, w \in \partial A \cap L_A \}$.

 If $n_A=0$ then $\bar x=o$. Let $x, y \in A$ such that $d(o,x)=d(o,y) \ge 1$, since $A$ is connected then $x \wedge y \in A$ since it belongs to the path $\varphi _{x,y}$. But $d(o,x \wedge y) \le d(o,x)=n_A$ and by definition of $n_A$  we must have $d(o,x \wedge y) = d(o,x)$ which implies $x=x\wedge y=y$.
 
 By connection every path $\varphi_{\bar x, y}$ where $y \in \partial A \cap L_A$ is in $A$, thus $\widetilde A \subseteq A$. Suppose that $x \in A$; if $x \in L_A$ then $x$ is the ending point of the path $\varphi_{o,x}$ whence $x \in \widetilde A$. Otherwise there is a neighbor $x_1 \in A$ such that $d(o,x_1)=d(o,x)+1$. Suppose we have $x_i \in A$ such that $d(o,x_i)=d(o,x)+i$; either $x_i \in L_A$ (whence $x_i \in \widetilde A$) or there exist a neighbor 
 $x_{i+1} \in A$ such that $d(o,x_{i+1})=d(o,x_{i})+1=d(o,x)+i+1$. If this iterative construction ends in a finite number of steps, say $n$, then $x_{n} \in L_A$ whence $x_n \in \widetilde A$, $x$ belongs to $\varphi(o,x_n)$ thus $x \in \widetilde A$. Otherwise we have and infinite injective path  in $A$ converging to some $y \in \partial A$. Clearly $x$ belongs to $\varphi_{o, y}$ whence $x \in \widetilde A$.
\end{proof}

\begin{proof}[Proof of Proposition~\ref{pro:connectedequalboundary}]

By using Lemma~\ref{lem:boundaryA} or Remark~\ref{rem:connectedborder} we obtain a connected set $B$ such that $\partial B=\partial A$.

We prove now the second part of the proposition.
Let us start by noting that in a connected tree, given two distinct vertices $x \neq y$ a path $\varphi$ connects $x$ to $y$ if and only if $\varphi \supseteq \varphi_{x,y}$, that is, the path $\varphi$ must visit every vertex of the shortest path from $x$ to $y$. This implies that if $A$ is connected and $x, y \in A$ then $\varphi_{x,y} \subseteq A$.

Moreover, given two connected sets $A$ and $B$ such that $A \cap B \neq \emptyset$, clearly $A \cup B$ and $A \cap B$ are connected; in particular if $x, y \in A \cap B$ then $\varphi_{x,y} \subseteq A \cap B$. If we take two disjoint connected components $C_1, C_2 \subseteq A \triangle B$ and a path $\varphi$ going from $C_1$ to $C_2$ then $\varphi \cap A \cap B \neq \emptyset$. Indeed, let $x$ and $y$ be the starting and ending vertices of $\varphi$ 
then $\varphi_{x,y} \subseteq A \cup B$. Moreover, let $i_1$ be such that $\varphi_{x,y}(i_1) \in C_1$ and $\varphi_{x,y}(i) \not \in C_1$ for all $i >i_1$; similarly let $i_2$ be such that $\varphi_{x,y}(i_2) \in C_2$ and $\varphi_{x,y}(i) \not \in C_2$ for all $i <i_2$.
Since $C_1$ and $C_2$ are disjoint connected components and $A \cap B \neq \emptyset$ then $i_2-i_1>1$. Clearly, $x_1:=\varphi_{x,y}(i_1+1) \in A \cap B$ and $y_1:=\varphi_{x,y}(i_2-1) \in A \cap B$;
on the other hand they belong to $A \cup B$. Moreover $\{\varphi_{x,y}(i_1+1), \ldots \varphi_{x,y}(i_2-1)\}$ is the shortest path $\varphi_{x_1,y_1}$
thus, since $A \cap B$ is connected
$\varphi_{x_1,y_1} \subseteq A \cap B$, whence $\{\varphi_{x,y}(i_1+1), \ldots \varphi_{x,y}(i_2-1)\} \subseteq A \cap B$.
Roughly speaking, 
to travel from one connected component of $A \triangle B$ to a different one (when $A \cap B \neq \emptyset$) one needs to visit $A \cap B$.

Let $A$ and $B$ be two connected subset of the tree such that $\partial A=\partial B$.
If $\partial A=\partial B=\emptyset$, that is, $A$ and $B$ are both finite, then $\mathbf{q}(A)=\mathbf{q}(B)$ since the process is irreducible.

Suppose now that $\partial A=\partial B \neq \emptyset$. If $\varphi \in \partial A$ then, since $A$ is connected, $\varphi(i) \in A$ eventually when $i \to +\infty$. A similar argument applies for $B$, thus $\varphi(i) \in A \cap B$ eventually as $i \to +\infty$. In particular $A \cap B \neq \emptyset$.
Note that $A \triangle B$ is the (possibly infinite) union of finite connected components otherwise, an infinite connected component in $A \setminus B$ (resp.~$B \setminus A$) would contain a infinite path, whence a cluster point, which belongs to $\partial A \setminus \partial B$ (resp.~$\partial B \setminus \partial A$).   In this case $
\mathbf{q}(A)=\mathbf{q}(B)$; 
%
%
indeed, since all components are finite, in order to survive in $A \triangle B$ the BRW must visit an infinite number of components. We proved that, in order to move from one component to another, the BRW must visit  $A \cap B$. This means that  survival in $A \triangle B$ implies survival in $A \cap B$. So survival in $A \cup B$ implies survival in $A \cap B$ (the converse is trivial). Whence
$\mathbf{q}(A \cup B)= \mathbf{q}(A \cap B)$; however, $\mathbf{q}(A \cap B) \leq \mathbf{q}(A),\, \mathbf{q}(B) \leq \mathbf{q}(A \cup B)$, thus $\mathbf{q}(A) = \mathbf{q}(B)$.
\end{proof}

%
%

\begin{proof}[Proof of Theorem~\ref{prop:equivalent} (iii)]
  As in Remark~\ref{rem:connectedborder}, consider  $\partial G \setminus \partial U=
\bigcup_{x \in I} \partial \mathbf{T}_{x}$
for a fixed $I \subseteq G$ such that $\{\mathbf{T}_{x}\}_{x \in I}$ are mutually disjoint. Clearly
$\gamma_o(\partial U)=1-\sum_{x \in I} \gamma_o( \partial \mathbf{T}_{x})$; thus
$\gamma_o(\partial U)>0$ if and only if 
$\gamma_o \Big (\bigcup_{x \in I} \partial \mathbf{T}_{x} \Big )<1$.

Define $B:=G \setminus \bigcup_{x \in I} \mathbf{T}_{x}$; we know from Proposition~\ref{pro:connectedequalboundary} that  $B$ is connected, $\partial B=\partial U$ and
$\mathbf{q}(U)=\mathbf{q}(B)$. 
Without loss of generality we can prove the result for $B$.

We know from Lemma~\ref{lemma:rhoVSMmin} that with positive probability there is a line of descent of the BRW converging to $\partial B$. Due to the tree structure and the definition of $B$, then in order to converge to a point of $\partial B$, this line of descendants must cross $B$ infinitely many times, whence $\mathbf{q}(B)<1$.
\end{proof}



\begin{proof}[Proof of Proposition~\ref{pro:survivalpositivemeasure}]
If $A=\emptyset$ or $B=\emptyset$, then Theorem~\ref{prop:equivalent} (iii) yields the result.
Henceforth, we suppose that they are both nonempty.
We know from Lemma~\ref{lem:boundaryA} and  Proposition~\ref{pro:connectedequalboundary} that if we consider the collection of all vertices in the paths from $o$ to the boundary of $A$, namely $A':=\{\varphi(i)\colon \varphi \in \partial A, \, i \in \mathbb{N}\}$, then $\partial A = \partial A^\prime$ and $\mathbf{q}(A^\prime)=\mathbf{q}(A)$. It is enough to prove the result for $A$ and $B$ of this type.
For such $A$ and $B$ clearly $\partial (A \setminus B)=\partial A \setminus \partial B$. Moreover if we consider any connected component of $A \setminus B$, since we are in a tree, is not difficult to see that it can be disconnected from $B$ by removing a single edge.
 Suppose, without loss of generality that $\gamma_o(\partial A \setminus \partial B)>0$ (otherwise, swap roles between $A$ and $B$). Since $A \setminus B$ is an at-most-countable collection of connected components, there is at least one component $C$ such that $\gamma_o(\partial C)>0$. According to Theorem~\ref{prop:equivalent} (iii) there is 
 positive probability of 
 survival in $C$ while the connecting edge is crossed a finite number of times.
 Then, by Theorem~\ref{th:1}, there is 
 positive probability of 
 survival in $C$ without crossing the connected edge, that is, without visiting $B$ (starting from some $x$). Theorem~\ref{th:1} yields $\mathbf{q}(x,A) \le \mathbf{q}(x, C) <
\mathbf{q}(x, B)$ for some $x$.
\end{proof}

\section*{Acknowledgments.}
E.C.\ was partially supported by the project ``Programma per Giovani Ricercatori Rita Levi Montalcini'' awarded by the Italian Ministry of Education.
The authors acknowledge partial support by ``INdAM--GNAMPA Project 2019'', ``INdAM--GNAMPA Project 2020'' and ``INdAM--GNAMPA Project cod.\ 
CUP$\_$ E53C22001930001''. The authors thank the anonymous referee for useful suggestions and comments.

\nocite{}
\bibliographystyle{amsalpha}
\bibliography{bibliography2.bib}

\end{document}